\numberwithin{equation}{section}
\newcommand{\id}{{\rm id}}
\newcommand{\e}{\varepsilon}
\newcommand{\Pb}{\mathbb{P}}
\newcommand{\E}{\mathbb{E}}
\newcommand{\R}{\mathbb{R}}
\newcommand{\Z}{\mathbb{Z}}
\newcommand{\Id}{\mathbbm{1}}
\newcommand{\Or}{{\cal O}}
\newtheorem{prop}{Proposition}[section]
\newtheorem{thm}[prop]{Theorem}
\newtheorem{lem}[prop]{Lemma}
\newtheorem{defin}[prop]{Definition}
\newtheorem{cor}[prop]{Corollary}
\newtheorem{cla}[prop]{Claim}
\newtheorem{rem}[prop]{Remark}
\newenvironment{remark}{\begin{rem}\normalfont}{\end{rem}}
\title{Shock fluctuations in TASEP\\ under a variety of time scalings}
\author{Alexey Bufetov\thanks{Institute for Applied Mathematics, Bonn University, Endenicher Allee 60, 53115 Bonn, Germany. E-mail: {\tt alexey.bufetov@gmail.com}}
\and Patrik L.\ Ferrari\thanks{Institute for Applied Mathematics, Bonn University, Endenicher Allee 60, 53115 Bonn, Germany. E-mail: {\tt ferrari@uni-bonn.de}}
}
\date{}
\begin{document}
\maketitle

\sloppy
\begin{abstract}
We consider the totally asymmetric simple exclusion process (TASEP) with two different initial conditions with shock discontinuities formed by blocks of fully packed particles. Initially a second class particle is at the left of a shock discontinuity. Using multicolored TASEP we derive exact formulas for the distribution of the second class particle and colored height functions. These are given in terms of the height function at different positions of a single TASEP configuration. We study the limiting distributions of second class particles (and colored height functions). The result depends on how the width blocks of particles scale with the observation time; we study a variety of such scalings.
\end{abstract}

\section{Introduction and main results}
The totally asymmetric simple exclusion process (TASEP) is a well-studied interacting particle system in the Kardar-Parisi-Zhang (KPZ) universality class. Each site of $\Z$ can be either occupied by a particle or be empty. Thus a configuration is an element $\eta\in \{0,1 \}^\Z$, where $\eta(x)=1$ if $x$ is occupied and $\eta(x)=0$ if site $x$ is empty. Independently of each other, particles try to jump to their right neighboring site with rate $1$. The jump occurs if the arriving site is empty. In this paper we investigate a so-called second-class particle, which is a particle which also tries to jump to its right with rate $1$, but whenever a normal particle (also called a first class particle) jumps on its position, then the second class particle exchanges its position with the one of the first class particle.

Second class particles can be also seen as discrepancies between two TASEP systems coupled by the basic coupling, see for instance~\cite{Li99} for further details. From this point of view, the distribution of the second class particle gives an information on the correlation between particle occupations. For instance, in the case of stationary initial condition the probability that the second class particle at time $t$ is at position $j$ is precisely proportional to ${\rm Cov}(\eta_t(j),\eta_0(0))$, see e.g.~\cite{PS01}. Finally, when the interacting particle system generates shocks, this can be identified with the position of the second class particle, see Chapter~3 of~\cite{Li99}.

Previously results on asymptotics of the position of a second class particle for some non-random initial conditions have been obtained by passing through the connection to a last passage percolation (LPP) model~\cite{FGN17}. However, the LPP framework is not the natural one to study the second class particle. In that framework the natural observable is the so-called competition interface. The trajectory of one second class particle is a (non-trivial) random time change of the competition interface ~\cite{FP05B},~\cite{FMP09}. The distribution of the competition interface can be expressed in terms of the differences of last passage times. Asymptotic results can be found in~\cite{FN13,FN16,N17}. Due to the random time change, it is not immediate to extend the result on the competition interfaces to the position of the second class particle. Another important point is that we also study TASEP with many second and even third class particles. It is not known whether these multi-species TASEP's can be coupled with LPP at all. In this paper we never use the mapping to LPP to derive our results.

To study the distribution of the position of the second class particle, in particular its large time limit, we employ the following generalization of TASEP~\cite{AAV11}. Each site has a particle with integer-valued colors (where $+\infty$ corresponds also to holes). Each particle tries to jump to its right with rate $1$ and the jump occurs only if the right site is occupied by a particle with higher color, in which case the two particles exchange their positions. This process has a rich algebraic structure (due to its connection to Hecke algebras~\cite{B20}) which we use for the study of the distribution of the position of the second class particles.

In this paper we consider two (non-random) initial conditions for TASEP in which one or two shocks are present, and we set a single second class particle starting at the boundary of a shock, as illustrated in Figure~\ref{FigIC}. The first new result of this paper is that the distribution of the position of the second class particle can be written in terms of a difference of the height function of TASEP (without second class particles) at two different positions, see Propositions~\ref{prop:shock1} and~\ref{prop:shock2}. This non-asymptotic result is achieved by relating to a system of colored particles with $3$ different colors in the first and $4$ different colors for the second initial condition. In Propositions~\ref{prop:shock1} and~\ref{prop:shock2} we also state a more general exact relation of the height functions for the different colors to a single TASEP in the situation when all particles inside one block are assigned its own color, so many second- and third-class particles are present in the system.
\begin{figure}
\begin{center}
\psfrag{0}[c]{$0$}
\psfrag{Mm}[c]{$-M_-$}
\psfrag{Mp}[c]{$M_+$}
\psfrag{M}[c]{$M$}
\psfrag{-M}[c]{$-M$}
\psfrag{MN}[c]{$M+N$}
\psfrag{-MN}[c]{$-M-N$}
\includegraphics[height=4cm]{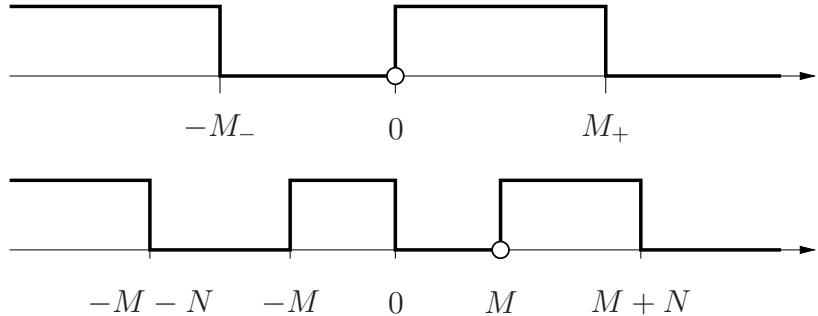}
\caption{Density of particles at time $t=0$ for the two initial conditions considered. The plateaux have density $1$ and the white dot is the position from where the second class particle starts.}
\label{FigIC}
\end{center}
\end{figure}

Let us explain the main asymptotic results for the first of the two initial conditions we considered. At time $t=0$, there are two blocks of fully occupied regions: to the left of $-M_-$ and between $1$ and $M_+$. A second class particle is put initially at position $0$, see Figure~\ref{FigOneGUEGUE}. The limiting distribution and scaling of the position of the second class particle depends on how $M_\pm$ scales with the observation time $t$. Here we consider three scalings of $M_\pm$ with time.
\begin{itemize}
\item[(a)] Let $M_+=[at]$ and $M_-=[bt]$, with $0<a,b<1$. Then the second class particle has fluctuations in the $t^{1/3}$ scale and its statistics is given as a difference of two independent GUE Tracy-Widom distributed random variables, see Proposition~\ref{prop5.1}.
\item[(b)] Let $M_+=[at^\delta]$ and $M_-=[bt^\delta]$, with $a,b>0$ and $\delta\in (2/3,1)$. Then the fluctuation scale is  $t^{4/3-\delta}$ and the statistics is a difference of two independent GUE Tracy-Widom distributed random variables, see Proposition~\ref{prop5.2}.
\item[(c)] Let $M_+=[at^\delta]$ and $M_-=[bt^\delta]$, with $a,b>0$ and $\delta\in (0,2/3)$. Then the fluctuation of the second class particles are Gaussian in a scale $t^{1-\delta/2}$, see Proposition~\ref{prop5.3}.
\end{itemize}
The $\delta=2/3$ case has been analyzed previously in~\cite{BB19}. See Section~\ref{sec:theorems1shock} for the detailed results and Section~\ref{sec:theorems2shock} for the result with the second initial condition.

The use of the algebraic structure behind the multi-colored process allows to relate the distribution of the position of the second class particle to a certain observable of a single-species TASEP started from the step initial condition. However, finding the asymptotic behavior of this observable is a non-trivial task which is addressed in Sections~\ref{sec:Geodesics} and \ref{sec:StepAsymp}.
Cases (a) and (b) above are similar as the limiting distribution is given in terms of two independent random variables. They however differ as for $\delta\in(2/3,1)$ there is one extra term in the law of large number approximation to be taken into account. The result is obtained using the following strategy. First of all we show that the randomness generated in the any initial mesoscopic time scale is irrelevant, see Proposition~\ref{PropSlowDec}. This property is essentially the same as the so-called slow-decorrelation one studied in~\cite{Fer08,CFP10b}. Secondly, we show that the randomness away of a $u t^{2/3}$ distance from the characteristic ending at $(x,t)$ can influence the height function at $(x,t)$ only with probability bounded from above by $e^{-c u^2}$, see Proposition~\ref{PropLocalization}, i.e., the relevant randomness is localized. This result is obtained by introducing backwards geodesics, which then implies the concatenation property for the height function (see Proposition~\ref{PropConcatenation}). After that we determine an estimate of the tail distribution of the geodesics at time $t/2$. Finally we use the approach developed in the LPP framework in~\cite{BSS14} for an uniform estimate over the full time span. All together this leads to the generic asymptotic decoupling result of Theorem~\ref{thm:DecorrelationHeights}.

This construction using the space-time picture for the height function is also novel. The space-time physical picture is expected to be true also in cases like the partially asymmetric simple exclusion process, although in that framework the concatenation property is only an inequality. A recent result on particle statistics at a shock with initial condition as the one of Figure~\ref{FigOneGUEGUE}, see~\cite{N19}. To be able to apply the ideas of this paper to the partially asymmetric case, one should first get a control on the deviations from the formula given by the concatenation property, which is not available so-far.

For case (c), the end-points where the TASEP height function is evaluated are smaller than the correlation scale, which is $t^{2/3}$. Thus we need to show that local increments are similar to the stationary ones. This is made using the comparison lemma (see Lemma~\ref{lemComparison}), see~\cite{CP15b,Pim17} for an analogous approach for LPP models. As input we need to know the locations at time $0$ of the backwards characteristics for different initial conditions. The result on the Gaussian increments is stated in Theorem~\ref{thm:LocalGaussianHeight}.

\paragraph{Outline:} In Section~\ref{sect:ExactRelations} we derive the finite time exact relations between position of the second class particle and colored height functions with the height function of TASEP with step initial conditions. The main asymptotic results are presented in Section~\ref{sectMainThmProof}. In Section~\ref{sec:Geodesics} we give the construction of the backwards geodesics, small time decorrelations and derive the localization results. Using these, we derive in Section~\ref{sec:StepAsymp} the asymptotic results for the height function of TASEP with step initial conditions needed for our main asymptotic theorems.

\paragraph{Acknowledgments: } We are grateful to A.~Borodin for very useful duscussions on the early stages of the project. The work of P.L. Ferrari is supported by the German Research Foundation through the Collaborative Research Center 1060 ``The Mathematics of Emergent Effects'' (Projekt ID 211504053), project B04. Furthermore, the work of both authors is supported by the Deutsche Forschungsgemeinschaft (DFG, German Research Foundation) under Germany's Excellence Strategy - GZ 2047/1, Projekt ID 390685813.

\section{Exact relations to TASEP with step initial conditions}\label{sect:ExactRelations}

\subsection{Updates of multi-colored TASEP and symmetry theorem}
We start with a description of a colored (or multi-species, or multi-type) version of the totally asymmetric simple exclusion process (TASEP). We consider an interacting particle system in which particles live on the integer lattice $\Z$ and each integer location contains exactly one particle. The set of colors is taken as $\Z \cup \{ +\infty \}$.

A particle configuration is a map $\eta: \Z \to \Z \cup \{ +\infty \}$, where we call $\eta(z)$ the \emph{color} of a particle at $z\in\Z$. When $\eta(z)=+\infty$ we will think of $z$ being empty. Let $\mathfrak C$ be the set of all configurations. For a transposition $(z,z+1)$ with $z, z+1 \in \Z$, let $\sigma_{(z,z+1)}: \mathfrak C \to \mathfrak C$, be a swap operator defined by
\begin{equation}
(\sigma_{(z,z+1)} \eta) (i) =
\begin{cases}
\eta(z+1), \qquad & i=z, \\
\eta(z), \qquad & i=z+1, \\
\eta(i), \qquad & i \in \Z \backslash \{ z,z+1 \}.
\end{cases}
\end{equation}
Define a \textit{totally asymmetric swap} operator $W_{(z,z+1)}: \mathfrak C \to \mathfrak C$, $z \in \Z$, via
\begin{equation}
(W_{(z,z+1)} \eta) =
\begin{cases}
\eta, \qquad & \mbox{if $\eta(z) \ge \eta(z+1)$}, \\
\sigma_{(z,z+1)} \eta, \qquad & \mbox{if $\eta(z) < \eta(z+1)$}.
\end{cases}
\end{equation}

Any bijection of integers $s: \Z \to \Z$ can be viewed as a particle configuration by setting $\eta(z)=s(z)$. Such particle configurations will be especially important for us because of the following result.

\begin{thm}[Lemma 2.1 of \cite{AHR09}]
\label{th:sym-basic}
Let $\mathrm{id} : \Z \to \Z$ be the identity bijection. Then, for any $k \in \Z$ and for any integers $z_1, \dots, z_k$ one has
\begin{equation}
W_{(z_k,z_k+1)} \dots W_{(z_2,z_2+1)} W_{(z_1,z_1+1)} \id= \mathrm{inv} \left( W_{(z_1,z_1+1)} W_{(z_2,z_2+1)} \dots W_{(z_k,z_k+1)} \id \right),
\end{equation}
where in the right-hand side $\mathrm{inv}$ denotes the inverse map in the space of bijections $\Z \to \Z$.
\end{thm}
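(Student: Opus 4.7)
The plan is to recast both sides of the identity as bijections of $\Z$ and then interpret them via trajectories of labeled particles. Since each $W_{(z,z+1)}$ preserves the class of bijections and $\id$ is a bijection, so are both sides. View $\eta(z)$ as the label of the particle located at $z$; starting from $\id$, the particle at site $x$ initially carries label $x$. Let $F:\Z\to\Z$ denote the map taking an initial site $x$ to the position of the corresponding particle at time $k$ under the forward dynamics $W_{z_1},W_{z_2},\dots,W_{z_k}$ (rightmost operator acting first). Because the final configuration $\eta_k:=W_{z_k}\cdots W_{z_1}\id$ records, at each site, the label of the particle sitting there, we have $\eta_k(F(x))=x$, so $\eta_k=\mathrm{inv}(F)$. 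The same reasoning applied to the reversed sequence gives $W_{z_1}\cdots W_{z_k}\id=\mathrm{inv}(G)$, where $G$ is the trajectory-endpoint map of the reverse dynamics. Hence the theorem reduces to showing $F\circ G=\id$: the forward and reverse dynamics started from $\id$ produce mutually inverse bijections.

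To prove this, the key tool is a color--position intertwining. For any bijection $\sigma$ one verifies directly that
\begin{equation*}
  \mathrm{inv}\bigl(W_{(z,z+1)}\,\sigma\bigr)\;=\;A_{z}\bigl(\mathrm{inv}(\sigma)\bigr),
\end{equation*}
where $A_{z}$ is the ``color swap'' interchanging the positions of the labels $z$ and $z+1$ precisely when the former lies strictly left of the latter. Iterating this identity $k$ times on the right-hand side of the theorem, starting from $\mathrm{inv}(\id)=\id$, yields $\mathrm{inv}(W_{z_1}\cdots W_{z_k}\id)=A_{z_1}A_{z_2}\cdots A_{z_k}\id$. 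The remaining task is to identify this with $W_{z_k}\cdots W_{z_1}\id$. This uses two inputs: first, $A_z\id=W_z\id$, since both equal the adjacent transposition of $z$ and $z+1$; second, both the $W$- and $A$-families obey the $0$-Hecke relations (idempotency $W_z^2=W_z$, far commutation for $|z-z'|\geq 2$, and the braid relation $W_zW_{z+1}W_z=W_{z+1}W_zW_{z+1}$). Because in a monoid of involutive generators reversing a word produces the inverse element, a short induction on $k$ then identifies the two expressions and closes the argument.

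The main obstacle I expect is precisely this matching of $A$- and $W$-orderings. A naive induction peeling off one $W$ from either end of the product fails because removing a single operator replaces the starting bijection $\id$ by $W_{z_j}\id$, and the inductive hypothesis no longer applies. The color--position intertwining bypasses this by trading positional swaps in one temporal order for color swaps in the reverse order, so that the comparison is reduced to two words of equal length acting on $\id$, where only the $0$-Hecke relations need to be verified rather than a genuine dynamical comparison between differing initial configurations.
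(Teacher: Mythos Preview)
The paper does not actually prove this theorem; it is stated as a citation from \cite{AHR09}, with a remark that algebraically it is a known involution in the Hecke algebra. So there is no ``paper's own proof'' to compare against, and your proposal should be judged on its own merits.

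Your intertwining identity $\mathrm{inv}(W_{(z,z+1)}\sigma)=A_z(\mathrm{inv}(\sigma))$ is correct and is exactly the right structural observation: $W_z$ acts by $\sigma\mapsto \sigma * s_z$ (right Demazure product) while $A_z$ acts by $\tau\mapsto s_z * \tau$ (left Demazure product). Iterating it correctly reduces the theorem to
\[
W_{z_k}\cdots W_{z_1}\,\id \;=\; A_{z_1}\cdots A_{z_k}\,\id.
\]

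The gap is in your justification of this last identity. The claim ``in a monoid of involutive generators reversing a word produces the inverse element'' is not applicable here: the $0$-Hecke generators are \emph{idempotent}, not involutive, and the $0$-Hecke monoid has no inverses at all. Moreover, even granting that both $\{W_z\}$ and $\{A_z\}$ satisfy the $0$-Hecke relations and that $W_z\,\id=A_z\,\id$, this does \emph{not} by itself imply the displayed identity: for $k=2$ you would need $W_{z_2}(s_{z_1})=A_{z_1}(s_{z_2})$, which compares different operators on different elements and cannot be deduced from the relations alone.

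The correct closing argument is the one your setup already makes available. Since $W_z$ is right Demazure multiplication by $s_z$ and $A_z$ is left Demazure multiplication by $s_z$, one has
\[
W_{z_k}\cdots W_{z_1}\,\id \;=\; \bigl((\id * s_{z_1}) * s_{z_2}\bigr)*\cdots * s_{z_k},
\qquad
A_{z_1}\cdots A_{z_k}\,\id \;=\; s_{z_1}*\bigl(s_{z_2}*\cdots *(s_{z_k}*\id)\bigr),
\]
and both equal $s_{z_1}*s_{z_2}*\cdots * s_{z_k}$ by associativity of the Demazure product (equivalently, by the commutation $A_{z'}W_z=W_zA_{z'}$, which \emph{is} a consequence of associativity). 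Replace your final paragraph with this observation and the proof is complete.
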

\begin{remark}
\label{rem:perm-as-particles}
Consider a bijection $\eta: \Z \to \Z$ interpreted as a particle configuration in the way described above: As a map from positions to colors of particles standing at these positions. Then ${\rm inv}(\eta)$ maps colors of particles to positions where they stand in the configuration $\eta$. For example, $\eta(0)$ is the color of the particle standing at 0, while ${\rm inv}(\eta)(0)$ is the position of the particle of color $0$ in the configuration $\eta$. Here and below we will denote by ${\rm inv}(\eta)(z)$ the application of the map ${\rm inv}(\eta)$ to an integer $z$.

\end{remark}

In a probabilistic setting, Theorem \ref{th:sym-basic} was proved in~\cite[Lemma 2.1]{AHR09}, see~\cite{AAV11} and~\cite{BB19} for generalizations. In an equivalent algebraic setting, it turns out to be a well-known involution in the Hecke algebra, see \cite{B20}, \cite{G20}.

\subsection{A continuous time multi-color TASEP}
\label{sec:defTasepMC}

Now let us define a continuous-time TASEP. Consider a collection of independent Poisson processes $\{ \mathcal P (z) \}_{z \in \Z}$, where $\mathcal P (z)$ has a state space $\R_{\ge 0}$ and rate $1$. Let $\eta_0 \in \mathfrak C$ be a (either deterministic or random) particle configuration which plays the role of an initial condition. In the random case, the initial distribution is taken to be independent of the Poisson processes. We define a continuous-time stochastic evolution $\{ \eta_t \}_{t \in \R_{\ge 0}}$, $\eta_t \in \mathfrak C$, by applying $W_{(z,z+1)}$ at time $t$ which is an event of the Poisson process $\mathcal P (z)$. More explicitly, the particle at $z$ exchanges its position with the particle at site $z+1$ if its color has a lower value. It is readily shown via standard techniques that under our assumptions such a random process is well-defined, see~\cite{Har78,Har72,Hol70,Lig72}. Denote by $S(t)$ the semigroup of the process.

Given a sequence of nearest-neighbor transpositions $s_1, s_2, \dots, s_l$, we will consider two processes associated with this data. In the first one, let us start with the initial configuration $\id(z)=z$, then apply to it updates $W_{s_1}, W_{s_2},\ldots, W_{s_l}$ ($W_{s_1}$ is the first update to be applied). After it, we start the continuous time process described in the previous paragraph and denote by $\eta_{t}^{{\rm gen};1}$ the random configuration obtained after time $t$. For the second process, we start again with the initial configuration $\id(z)=z$, but first perform the continuous time process running for time $t$. After this, we apply to the resulting (random) configuration the updates in the reversed order, that is, $W_{s_l}, W_{s_{l-1}}, \ldots, W_{s_1}$. Denote by $\eta_{t}^{{\rm gen};2}$ the obtained configuration. The superscript ``gen'' stands for generic since each particle has a different color. The processes considered in the sequel will be projections of these processes, where sets of colors will be mapped to a single particle type.

\begin{prop}[Theorem 3.1 of~\cite{BB19}]
\label{prop:SymConts}
The random configurations $\eta_{t}^{{\rm gen};1}$ and $\mathrm{inv} \left( \eta_{t}^{{\rm gen};2} \right)$ have the same distribution, i.e.,
\begin{equation}
S(t)W_{s_l}\cdots W_{s_1} \id \stackrel{d}{=}{\rm inv}(W_{s_1}\cdots W_{s_l} S(t) \id ),
\end{equation}
and, of course, also $\mathrm{inv}(\eta_{t}^{{\rm gen};1} )\stackrel{d}{=}\eta_{t}^{{\rm gen};2} $.
\end{prop}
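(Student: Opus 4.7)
The plan is to lift the deterministic identity of Theorem~\ref{th:sym-basic} to continuous time by realizing the semigroup $S(t)$ through its graphical (Harris) construction and then exploiting the time-reversal symmetry of the driving Poisson processes. Conditioning on the full realization of $\{\mathcal P(z)\}_{z\in\Z}$ on $[0,t]$, order its joint events chronologically as $r_1,r_2,\dots,r_N$. Then by the very definition of the multi-colored TASEP,
\begin{equation*}
S(t)\id=W_{r_N}\cdots W_{r_2}W_{r_1}\id
\end{equation*}
almost surely. Applying Theorem~\ref{th:sym-basic} to the concatenated list of transpositions $s_1,\dots,s_l,r_1,\dots,r_N$ yields
\begin{equation*}
\eta_{t}^{\mathrm{gen};1}=W_{r_N}\cdots W_{r_1}W_{s_l}\cdots W_{s_1}\id=\mathrm{inv}\bigl(W_{s_1}\cdots W_{s_l}W_{r_1}\cdots W_{r_N}\id\bigr),
\end{equation*}
and since $\mathrm{inv}$ is an involution on bijections, this gives $\mathrm{inv}(\eta_t^{\mathrm{gen};1})=W_{s_1}\cdots W_{s_l}W_{r_1}\cdots W_{r_N}\id$.

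The key probabilistic input is that each Poisson process $\mathcal P(z)$ on $[0,t]$ is invariant in law under the time reversal $\tau\mapsto t-\tau$, and these processes are independent across edges. Consequently, the sequence of edge labels read out in reverse chronological order, $(r_N,r_{N-1},\dots,r_1)$, has the same joint distribution as the forward-time sequence $(r_1,\dots,r_N)$. Applying the same deterministic map (compose the corresponding $W$'s in the given order and evaluate at $\id$) to both sequences gives
\begin{equation*}
W_{r_1}\cdots W_{r_N}\id\stackrel{d}{=}W_{r_N}\cdots W_{r_1}\id=S(t)\id,
\end{equation*}
and applying the deterministic operator $W_{s_1}\cdots W_{s_l}$ to both sides preserves equality in distribution. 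One concludes $\mathrm{inv}(\eta_t^{\mathrm{gen};1})\stackrel{d}{=}W_{s_1}\cdots W_{s_l}S(t)\id=\eta_t^{\mathrm{gen};2}$, which is the claim.

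The only real obstacle is technical: the full Poisson collection on $\Z\times[0,t]$ has almost surely countably infinitely many points, so the products above are not finite compositions and Theorem~\ref{th:sym-basic} does not apply verbatim. I would handle this by the finite speed of propagation property underlying the graphical construction itself~\cite{Har78,Har72,Lig72}: for any bounded window $A\subset\Z$, there is an almost surely finite random $L=L(A,t)$ such that the values of $\eta_t$ on $A$ are determined only by Poisson events on edges inside $[-L,L]$. Restricting the argument to this window turns every product into a finite one, and taking $A$ to exhaust $\Z$ promotes equality of finite-dimensional distributions to the equality in distribution claimed in the proposition.
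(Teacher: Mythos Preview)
Your proof is correct and follows exactly the approach the paper indicates in the remark after the proposition: apply Theorem~\ref{th:sym-basic} to the concatenation of the deterministic updates with the Poisson-driven updates, and then invoke the time-reversal invariance of the Poisson processes. Your handling of the infinite-volume issue via finite speed of propagation is a sensible way to make the reference to~\cite{BB19} explicit.
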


\begin{remark}
This proposition is a direct corollary of Theorem \ref{th:sym-basic} and the time-reflection symmetry of a homogeneous Poisson process. See the proof of \cite[Theorem 3.1]{BB19} for details (and also the proof of \cite[Theorem 1.4]{AAV11} for a similar statement).
\end{remark}


\subsection{Standard TASEP with step initial conditions}

Consider a continuous time two-color TASEP $\eta_t^{\rm step} (z)$ starting from the step initial condition
\begin{equation}\label{eq2.5}
\eta_0^{\rm step} (z) = \begin{cases}
1, \quad & z \le 0, \\
+\infty, \quad & z >0.
\end{cases}
\end{equation}
For any $x \in \R$, we denote by $\mathcal{N} (x,t)$ the number of particles that are weakly to the right of $x$ in $\eta_t^{\rm step} (z)$,
\begin{equation}
{\cal N}(x,t)=\sum_{z\geq x}\delta_{\eta_t^{\rm step}(z),1}.
\end{equation}

We will relate other processes to this simpler and better studied one.

\subsection{One GUE-GUE shock}

In this section we will study a homogeneous three-color TASEP with a particular initial condition, see Figure~\ref{FigOneGUEGUE}.
\begin{figure}
\begin{center}
\psfrag{0}[c]{$0$}
\psfrag{Mm}[c]{$-M_-$}
\psfrag{Mp}[c]{$M_+$}
\psfrag{eta1}[l]{$\eta_0^{(1)}$}
\psfrag{eta2}[l]{$\tilde\eta_0^{(1)}$}
\psfrag{1st}[lb]{= first class}
\psfrag{2nd}[lb]{= second class}
\includegraphics[height=3cm]{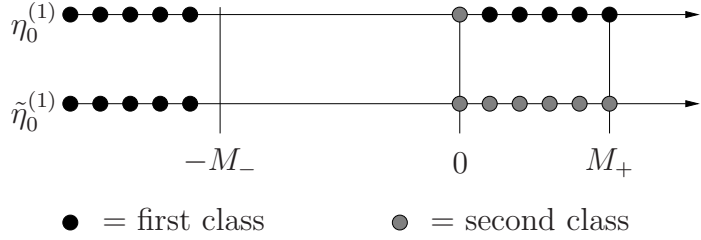}
\caption{Initial particle configurations $\eta_0^{(1)}$ and $\tilde \eta_0^{(1)}$. The particles with values $+\infty$, also considered as holes, are not shown.}
\label{FigOneGUEGUE}
\end{center}
\end{figure}

Let $M_{-}, M_{+}$ be positive integers, and consider a TASEP denoted as $\eta_t^{(1)} (z)$ with the initial condition
\begin{equation}
\eta_0^{(1)} (z) = \begin{cases}
1, \quad & z <-M_{-}\textrm{ or }1 \le z \le M_{+}, \\
2, \quad & z=0, \\
+\infty, \quad &\textrm{otherwise}
\end{cases}
\end{equation}
Also, define a TASEP denoted as $\tilde \eta_t^{(1)} (z)$ with the initial condition
\begin{equation}
\tilde \eta_0^{(1)} (z) = \begin{cases}
1, \quad & z <-M_{-}, \\
2, \quad & 0 \le z \le M_{+}, \\
+\infty, \quad & \textrm{otherwise}.
\end{cases}
\end{equation}

Both these processes illustrate the so called GUE-GUE shock. We will study the first process in order to analyze the behavior of the unique second class particle in the shock, and the second process in order to analyze the behavior of the multi-colored height function in the shock.

In more detail, let $\mathfrak{f}^{(1)} (t)$ be the position of the unique second class particle in the process $\eta_t^{(1)} (z)$.
Let $\mathfrak{N}^{(1)}_{1} (x,t)$ be the number of particles of color $1$ in $\tilde \eta_t^{(1)}$ which are weakly to the right of $x \in \R$. Let $\mathfrak{N}^{(1)}_{2} (x,t)$ be the number of particles of color $2$ in $\tilde \eta_t^{(1)}$ which are weakly to the right of $x \in \R$, namely
\begin{equation}
\mathfrak{N}^{(1)}_{c} (x,t) =\sum_{z\geq x}\delta_{\tilde \eta_t^{(1)}(z),c}, \qquad \mbox{for $c=1,2$.}
\end{equation}

We will study these quantities by relating them to a simpler process via the color-position symmetry. The first claim (equation \eqref{eq:SchockExact1}) of the following proposition is a minor generalization of \cite[Proposition 6.1]{BB19}, where only the case $M_{-}=M_{+}$ was addressed.

\begin{prop}
\label{prop:shock1}
For any $x \in \Z$ we have
\begin{equation}
\label{eq:SchockExact1}
\Pb\left( \mathfrak{f}^{(1)} (t) \ge x \right) = \Pb\left( \mathcal{N} ( x - M_{+},t) -  \mathcal{N} (x + M_{-}+1 ,t) \ge M_{+} +1 \right).
\end{equation}
Also, for any $x \in \Z$ we have
\begin{equation}
\begin{aligned}
\label{eq:SchockExact11}
&\left(  \mathfrak{N}^{(1)}_{1} (x,t) , \mathfrak{N}^{(1)}_{2} (x,t)  \right) \\
\stackrel{d}{=}  &\left( \mathcal{N} (x+M_{-}+1, t) , \min \left\{ \mathcal{N} (x - M_{+}, t) - \mathcal{N} (x + M_{-}+1, t), M_{+}+1 \right\} \right),
\end{aligned}
\end{equation}
where by $\stackrel{d}{=}$ we denote the equality in distribution.

\end{prop}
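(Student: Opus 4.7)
The plan is to apply the color--position symmetry of Proposition~\ref{prop:SymConts} to a carefully refined version of the initial condition, extending the strategy of \cite[Prop.~6.1]{BB19} (which treats the symmetric case $M_-=M_+$). I refine $\eta_0^{(1)}$ to a bijection $\bar\eta:\Z\to\Z$ with distinct integer colors compatible with the projection onto $\{1,2,+\infty\}$: give color $0$ to the second-class particle, colors $-1,-2,\ldots,-M_+$ to the first-class particles at $[1,M_+]$, colors $1,2,\ldots,M_-$ to the holes at $[-M_-,-1]$, and define $\bar\eta(z):=z+M_--M_+$ on the two infinite tails. Let $\id'(z):=z+(M_--M_+)$ denote the shifted identity. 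Then $\bar\eta$ coincides with $\id'$ outside the window $[-M_-,M_+]$, and hence $\bar\eta=W_{s_l}\cdots W_{s_1}\,\id'$ for an explicit finite sequence of adjacent swap operators. Because shifting all colors by a constant commutes with the TASEP dynamics and with the operators $W$, Proposition~\ref{prop:SymConts} (applied after the color shift by $M_+-M_-$) yields
\[
\mathfrak{f}^{(1)}(t)\;\stackrel{d}{=}\;\bigl(W_{s_1}\cdots W_{s_l}\,\zeta\bigr)(M_+-M_-),
\]
where $\zeta:=S(t)\,\id$ is the step TASEP in its generic coloring.

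The next step identifies the right-hand side as a concrete order statistic of $\zeta$. The word $W_{s_l}\cdots W_{s_1}$ realizes the specific permutation of the window $[-M_-,M_+]$ sending the monotone tuple $(-M_+,-M_++1,\ldots,M_-)$, i.e.\ the restriction of $\id'$ to the window, to the tuple $(1,2,\ldots,M_-,0,-1,\ldots,-M_+)$ imposed by $\bar\eta$. Since every $W$ depends only on the relative order of its two adjacent values, applying the reverse word $W_{s_1}\cdots W_{s_l}$ to any bijection $\zeta$ rearranges its values at positions $-M_-,\ldots,M_+$ by the same order-type permutation, leaving values outside the window untouched. A direct case analysis (extending that of \cite[Prop.~6.1]{BB19}) then shows that the color sitting at position $M_+-M_-$ after this rearrangement is precisely the $(M_-+1)$-th smallest of $\{\zeta(-M_-),\zeta(-M_-+1),\ldots,\zeta(M_+)\}$. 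Consequently $\mathfrak{f}^{(1)}(t)$ has the same distribution as this order statistic of $\zeta$ over a window of length $M_-+M_++1$.

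Finally, two symmetries of $\zeta=S(t)\,\id$ convert the event $\{\mathfrak{f}^{(1)}(t)\ge x\}$ into the right-hand side of \eqref{eq:SchockExact1}: the reflection--negation symmetry $(\zeta(z))_{z\in\Z}\stackrel{d}{=}(-\zeta(-z))_{z\in\Z}$ (a consequence of the reflection invariance in distribution of the iid Poisson clocks, together with the fact that $\id$ is fixed under $z\mapsto -z$ combined with $c\mapsto -c$), and the translation equivariance $(\zeta(z))_{z\in\Z}\stackrel{d}{=}(\zeta(z+r)-r)_{z\in\Z}$ for every $r\in\Z$. Combining them transforms the event ``the $(M_-+1)$-th smallest of $\{\zeta(-M_-),\ldots,\zeta(M_+)\}$ is $\ge x$'' into ``the $(M_++1)$-th smallest of $\{\zeta(x-M_+),\ldots,\zeta(x+M_-)\}$ is $\le 0$'', which is exactly the event that at least $M_++1$ of the $M_-+M_++1$ colors $\zeta(y)$ in the window $y\in[x-M_+,x+M_-]$ are $\le 0$, i.e.\ $\mathcal{N}(x-M_+,t)-\mathcal{N}(x+M_-+1,t)\ge M_++1$. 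This proves \eqref{eq:SchockExact1}. The joint identity \eqref{eq:SchockExact11} is obtained by the same procedure applied to a refinement of $\tilde\eta_0^{(1)}$ in which the whole block $[0,M_+]$ carries color $2$: $\mathfrak{N}^{(1)}_1(x,t)$ corresponds to $\mathcal{N}(x+M_-+1,t)$, while $\mathfrak{N}^{(1)}_2(x,t)$ corresponds to the number of colors $\le 0$ in the window $[x-M_+,x+M_-]$, truncated above by $M_++1$ since the initial condition contains exactly $M_++1$ color-$2$ particles. The principal obstacle is the combinatorial step identifying the swap-sequence action with the claimed order statistic; the shift that enables one to handle $M_-\ne M_+$ within the framework of Proposition~\ref{prop:SymConts} is the main extension beyond the symmetric case of \cite{BB19}.
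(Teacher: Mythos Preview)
Your overall strategy---apply the color--position symmetry, then read off an order statistic of $\zeta=S(t)\id$ inside the window $[-M_-,M_+]$---is exactly the paper's, but the combinatorial core of your argument contains a genuine error. You assert that ``applying the reverse word $W_{s_1}\cdots W_{s_l}$ to any bijection $\zeta$ rearranges its values at positions $-M_-,\ldots,M_+$ by the same order-type permutation''. This is false: the operators $W_s$ are \emph{conditional} swaps, so the permutation of ranks realised by a product $W_{s_1}\cdots W_{s_l}$ depends on the order type of the input. A small example already shows this: with $M_-=2$, $M_+=1$, your reverse word sends $(1,3,5,7)$ to $(7,5,1,3)$ but $(5,3,7,1)$ to $(7,5,3,1)$; the ranks at positions $0,1$ are interchanged between the two cases. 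Your subsequent assertion that position $M_+-M_-$ nonetheless always receives the $(M_-+1)$-th smallest value may well be true, but it does not follow from the (incorrect) preceding sentence, and ``a direct case analysis'' over all $(M_-+M_++1)!$ order types is not a proof.

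The paper avoids this difficulty entirely by choosing the \emph{full reversal} $\pi_{-M_-,M_+}$ of the window rather than your shifted permutation $\bar\eta$. For the longest element $w_0$ it is a classical fact (and easy to see) that any reduced word, read in either direction, sorts \emph{every} input into decreasing order; hence after applying the reversed word to $S(t)\id$, the colour at position $-M_-+M_+$ is automatically the $(M_++1)$-th largest in the window. One then projects colours $<x$ to particles and colours $\ge x$ to holes---which produces step initial data centred at $x-1$---and reads off \eqref{eq:SchockExact1} directly, with no need for the reflection--negation or translation symmetries you invoke at the end. The colour shift by $M_--M_+$ is likewise unnecessary: working with $\id$ itself, the second-class particle simply carries colour $-M_-+M_+$. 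In short, replacing your $\bar\eta$ by the full reversal repairs the gap and simultaneously shortens the argument.
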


\begin{proof}

Let $\pi_{-M_-, M_+}$ be a permutation of the set $\{-M_{-}, \dots, M_{+} \}$ such that $\pi_{-M_-, M_+} (-M_{-} + i) = M_{+} -i$, for $i=0,1,\dots, M_{+} + M_{-}$.
Consider a minimal length decomposition of this permutation into transpositions of neighbouring elements: $\pi_{-M_-, M_+} = s_{m} s_{m-1} \dots s_2 s_1$ (there are many such decompositions, we choose any of them; we always have $m=(M_{+} + M_{-})(M_{+} + M_{-}+1)/2$, since this is the number of inversions in $\pi_{-M_-, M_+}$ ).

The following remark will be important for us in the proof. Consider an arbitrary (infinite) permutation $\mathbf{\tilde \sigma}: \Z \to \Z$, and its interpretation as a multi-color configuration on $\Z$ (see Remark \ref{rem:perm-as-particles}). Then the permutation
$W_{s_m}\cdots W_{s_1} \mathbf{\tilde \sigma}$ is the following. Outside of the set $\{-M_{-}, \dots, M_{+} \}$ it coincides with $ \mathbf{\tilde \sigma}$. Inside the set $\{-M_{-}, \dots, M_{+} \}$ it is obtained by sorting all particles from $\mathbf{\tilde \sigma}$ standing there in the decreasing order of their colors. For example,
$$
W_{s_m} \cdots W_{s_1} \mathbf{\tilde \sigma} (-M_{-}) = \max_{-M_{-} \le i \le M_+} \mathbf{\tilde \sigma}(i), \qquad
W_{s_m} \cdots W_{s_1} \mathbf{\tilde \sigma} (M_{+}) = \min_{-M_{-} \le i \le M_+} \mathbf{\tilde \sigma}(i).
$$

Consider the process constructed from the identity by applying the transpositions $s_1, \dots, s_m$, i.e., $\eta_{0}^{{\rm gen};1}(z)=W_{s_m}\cdots W_{s_1} \id(z)$.
In words, in the packed initial configuration we sort integers between $-M_{-}$ and $M_+$ in the reverse order, i.e., the $\eta_0^{{\rm gen};1}$ configuration in the interval $[-M_-,M_+]$ is $(M_+,M_+-1,\ldots,-M_-)$. Then we run the continuous time dynamics until time $t$ (see the definition of the processes $\eta_{t}^{{\rm gen};1}$ and $\eta_{t}^{{\rm gen};2}$ in Section \ref{sec:defTasepMC}).

The interpretation in terms of first and second class particle is the following. Since $\pi_{-M_-, M_+} (-M_{-}+M_{+}) =0$, we identify the particle with color $-M_-+M_+$ as the second class particle. Furthermore, particles with colors $<-M_{-}+M_{+}$ are called first class particles, and particles with colors $>-M_{-}+M_{+}$ are called holes.


This gives
\begin{equation}
\Pb\left( \mathfrak{f}^{(1)} (t) \ge x \right) = \Pb \left( \mathrm{inv} \left( \eta_{t}^{{\rm gen};1} \right) (-M_{-}+M_{+}) \ge x \right).
\end{equation}
Similarly, interpreting colors $<-M_{-}$ as first class particles, colors between $-M_{-}$ and $-M_{-}+M_{+}$ as second class particles, and colors $>-M_{-}+M_{+}$ as holes, we obtain
\begin{multline}
\left(  \mathfrak{N}^{(1)}_{1} (x,t) , \mathfrak{N}^{(1)}_{2} (x,t)  \right) \stackrel{d}{=} \left( \#\textrm{ of }\left[ i: i< -M_{-}, \mathrm{inv} \left( \eta_{t}^{{\rm gen};1} \right) (i) \ge x \right], \right. \\ \left. \#\textrm{ of } \left[ i: -M_{-} \le i \le -M_{-}+M_{+}, \mathrm{inv} \left( \eta_{t}^{{\rm gen};1} \right) (i) \ge x \right] \right).
\end{multline}

Proposition~\ref{prop:SymConts} gives
\begin{equation}
\Pb \left( \mathrm{inv} \left( \eta_{t}^{{\rm gen};1} \right) (-M_-+M_+) \ge x \right) = \Pb \left( \eta_{t}^{{\rm gen};2} (-M_{-}+M_{+}) \ge x \right),
\end{equation}
and
\begin{equation}
\begin{aligned}
&\left( \#\textrm{ of }\left[ i: i< -M_{-}, \mathrm{inv} \left( \eta_{t}^{{\rm gen};1} \right) (i) \ge x \right], \right. \\
&\quad\quad \left. \#\textrm{ of }\left[ i: -M_{-} \le i \le -M_{-}+M_{+}, \mathrm{inv} \left( \eta_{t}^{{\rm gen};1}  \right)(i) \ge x \right] \right) \\
\stackrel{d}{=} &\left( \#\textrm{ of }\left[ i: i< -M_{-}, \eta_{t}^{{\rm gen};2} (i) \ge x \right],\right.\\
&\quad\quad \left.\#\textrm{ of }\left[ i: -M_{-} \le i \le -M_{-}+M_{+}, \eta_{t}^{{\rm gen};2} (i) \ge x \right] \right).
\end{aligned}
\end{equation}

Now we consider a second projection from $\eta_t^{{\rm gen};2}$ to particles and holes only. We say that if a color is $<x$, then we have a particle, while if the color is $\geq x$, then the site is empty. In particular, at time $t=0$ we have the step initial condition centered at $x-1$. After running the dynamics we have a configuration $S(t)\id(z)$. The application of $W_{s_1} \dots W_{s_m}$ reorders the colors in the interval $\{-M_-,\ldots,M_+\}$ in a decreasing order (since $s_1 \dots s_m$ also equals $\pi_{-M_-, M_+}$). Therefore, after this sorting all colors $\geq x$ are to the left of the colors $<x$. Thus, the color $\eta_{t}^{{\rm gen};2} (-M_{-}+M_{+})$ is greater or equal to $x$ whenever the number of holes in the set $\{-M_-,\ldots,M_+\}$ is at least $M_{+}+1$. Using the particle-hole duality of TASEP, this number of holes can be written as a difference of ${\cal N}$, namely
\begin{equation}
\Pb \left( \eta_{t}^{{\rm gen};2} (-M_{-}+M_{+}) \ge x \right) = \Pb \left( \mathcal{N} ( x - M_{+},t) -  \mathcal{N} (x + M_{-}+1 ,t) \ge M_{+} +1 \right).
\end{equation}
Similarly we get
\begin{multline}
\left( \mbox{\# of $\left[ i: i< -M_{-}, \eta_{t}^{{\rm gen};2} (i) \ge x \right]$}, \mbox{\# of $\left[ i: -M_{-} \le i \le -M_{-}+M_{+}, \eta_{t}^{{\rm gen};2} (i) \ge x \right]$} \right) \\ \stackrel{d}{=} \left( \mathcal{N} (x+M_{-}+1, t) , \min \left\{ \mathcal{N} (x - M_{+}, t) - \mathcal{N} (x + M_{-}+1, t), M_{+}+1 \right\} \right).
\end{multline}
This concludes the proof of the proposition.
\end{proof}

\subsection{Two GUE-GUE shocks}
Now we consider initial conditions as in Figure~\ref{FigTwoGUEGUE}.
\begin{figure}
\begin{center}
\psfrag{M1}[c]{$-M-N$}
\psfrag{M2}[c]{$-M$}
\psfrag{M3}[c]{$0$}
\psfrag{M4}[c]{$M$}
\psfrag{M5}[c]{$M+N$}
\psfrag{eta1}[l]{$\eta_0^{(2)}$}
\psfrag{eta2}[l]{$\tilde\eta_0^{(2)}$}
\psfrag{1st}[lb]{= first class}
\psfrag{2nd}[lb]{= second class}
\psfrag{3rd}[lb]{= third class}
\includegraphics[height=3cm]{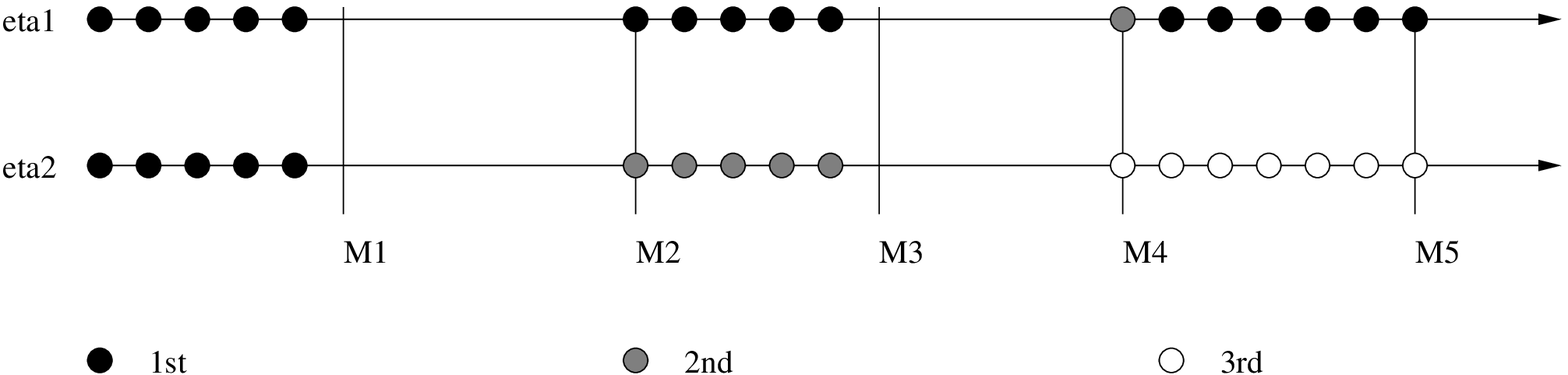}
\caption{Initial particle configurations $\eta_0^{(2)}$ and $\tilde \eta_0^{(2)}$. The particles with values $+\infty$, also considered as holes, are not shown.}
\label{FigTwoGUEGUE}
\end{center}
\end{figure}
Let $M, N$ be positive integers, and consider a TASEP denoted as $\eta_t^{(2)} (z)$ with the initial condition
\begin{equation}
\eta_0^{(2)} (z) = \begin{cases}
1, \quad & z <-M - N\textrm{ or } - M \le z \le -1\textrm{ or }M+1 \le z \le M + N,\\
2, \quad & z = M, \\
+\infty, \quad & \textrm{otherwise}.
\end{cases}
\end{equation}
Also, define a TASEP denoted as $\tilde \eta_t^{(2)} (z)$ with the initial condition
\begin{equation}
\tilde \eta_0^{(2)} (z) = \begin{cases}
1, \quad & z <-M - N, \\
2, \quad & - M \le z \le -1, \\
3, \quad & M \le z \le M + N, \\
+\infty, \quad & \textrm{otherwise}.
\end{cases}
\end{equation}

Let $\mathfrak{f}^{(2)} (t)$ be the position of the unique second class particle in the process $\eta_t^{(2)} (z)$.
Let $\mathfrak{N}^{(2)}_{i} (x,t)$ be the number of particles of color $i$ in $\tilde \eta_t^{(2)}$ which are weakly to the right of $x \in \R$, where $i \in \{1,2,3\}$.

\begin{prop}
\label{prop:shock2}
For any $x \in \Z$ we have
\begin{multline}
\label{eq:SchockExact2}
\Pb \left( \mathfrak{f}^{(2)} (t) \ge x \right) = \Pb( \mathcal{N} (x-M-N,t) - \mathcal{N} (x+1, t)\\ + \max \left\{ 0, \mathcal{N}(x+1, t) - \mathcal{N}(x+M+N+1, t) -M \right\} \ge N+1).
\end{multline}
Also, for any $x \in \Z$ we have
\begin{equation}
\begin{aligned}
\label{eq:SchockExact22}
&\left(  \mathfrak{N}^{(2)}_{1} (x,t) , \mathfrak{N}^{(2)}_{2} (x,t), \mathfrak{N}^{(2)}_{3} (x,t)  \right) \\ \stackrel{d}{=}&  \left( \mathcal{N} (x+M +N +1, t) , \min \left\{ \mathcal{N} (x +1, t) - \mathcal{N} (x + M+N+1, t), M \right\}, \right.  \\ &\quad\min \big\{\mathcal{N} (x-M-N,t) - \mathcal{N} (x+1, t) \\ &\left.  \quad\quad+ \max \left\{ 0, \mathcal{N}(x+1, t) - \mathcal{N}(x+M+N+1, t) -M \right\}, N+1 \big\} \right).
\end{aligned}
\end{equation}

\end{prop}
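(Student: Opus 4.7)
The proof follows the template of Proposition \ref{prop:shock1}: construct a permutation $\pi$ of $I = \{-M-N,\dots,M+N\}$ whose color configuration projects (via appropriate thresholds) to $\eta_0^{(2)}$ and, with a finer threshold, to $\tilde\eta_0^{(2)}$; apply the color-position symmetry of Proposition \ref{prop:SymConts} to replace $\mathrm{inv}(\eta_t^{{\rm gen};1})$ by $\eta_t^{{\rm gen};2} = W_{\pi^{-1}} S(t)\,\id$; project to particle/hole with threshold $x$ so that $S(t)\,\id$ becomes the time-$t$ step IC centred at $x-1$ and each $W$-update becomes a deterministic TASEP swap; and finally convert the resulting hole counts for the step IC at $x-1$ into $\mathcal{N}$-differences for the standard step IC via the duality identity
$$
\#\{\text{holes in }[a,b]\text{ at time }t\text{ for step IC at }x-1\} \stackrel{d}{=} \mathcal{N}(x-b,t) - \mathcal{N}(x-a+1,t).
$$

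For \eqref{eq:SchockExact2} I would take $\pi$ to be the involution on $I$ that swaps $0 \leftrightarrow M$, pairs each position of $[-M-N,-M-1]$ with a position of $[M+1,M+N]$, and fixes every position in $[-M,-1]\cup[1,M-1]$; one checks directly that this $\pi$ projects to $\eta_0^{(2)}$ with color $0$ sitting at position $M$. Proposition \ref{prop:SymConts} then yields $\mathfrak{f}^{(2)}(t) \stackrel{d}{=} \eta_t^{{\rm gen};2}(0)$, and the heart of the argument is to show that after the deterministic TASEP updates associated to a well-chosen reduced decomposition of $\pi$, the value at position $0$ is a hole exactly when $A + \max\{0,B-M\} \ge N+1$, where $A$ and $B$ are the numbers of holes in $[0,M+N]$ and $[-M-N,-1]$ for the step IC at $x-1$ at time $t$. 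Heuristically, the identity-fixed buffer positions $[-M,-1]$ absorb up to $M$ of the $B$ left-block holes, and any excess spills over to join the $A$ right-block holes in filling the $N+1$ positions $[-N,0]$, of which position $0$ is the last to be filled.

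For \eqref{eq:SchockExact22} one needs a refined permutation $\pi'$ in which the two first-class blocks $[-M,-1]$ and $[M+1,M+N]$ carry contiguous ranges of negative colors, so that an additional threshold splits them into colors $2$ and $3$ of $\tilde\eta_0^{(2)}$. The count $\mathfrak{N}^{(2)}_1$ is immediately $\mathcal{N}(x+M+N+1,t)$, because color $1$ corresponds to positions outside $I$ which $W_{(\pi')^{-1}}$ leaves untouched. The counts $\mathfrak{N}^{(2)}_2$ and $\mathfrak{N}^{(2)}_3$ equal, by the same absorb-then-spill description of the sort, the numbers of holes ultimately placed in the color-$2$ and color-$3$ ranges, giving $\min\{B,M\}$ and $\min\{A+\max\{0,B-M\},N+1\}$ respectively; applying the duality then yields \eqref{eq:SchockExact22}.

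The main obstacle is the structural claim about the sort $W_{\pi^{-1}}$ (and $W_{(\pi')^{-1}}$) acting on a two-colour configuration. For the reversal used in Proposition \ref{prop:shock1} this reduces to the clean statement \textit{sort in decreasing order}, but here $\pi$ (respectively $\pi'$) is a non-reversal and the sort has the more intricate absorb-then-spill block structure described above, which must be verified directly by bookkeeping on an explicit reduced decomposition, tracking where each hole lands under the sequence of deterministic TASEP updates.
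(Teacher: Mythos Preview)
Your outline follows the right template (color-position symmetry, project at threshold $x$, read off the step-IC hole counts), but the choice of permutation is where you and the paper diverge, and this is where your proposal has a genuine gap.

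The paper does \emph{not} take a single involution. It takes the \emph{composition of two full reversals}: first the reversal of $[-N,M+N]$, then the reversal of $[-M-N,-1]$. The point of this choice is that for a full reversal $w_0^{[a,b]}$, the 0-Hecke operator $W_{s_1}\cdots W_{s_m}$ (for \emph{any} reduced word) acts on \emph{any} configuration by sorting the interval $[a,b]$ into decreasing order of colors. So in the reversed process one first sorts $[-M-N,-1]$, then sorts $[-N,M+N]$; the hole count after two sorts is an elementary two-line computation which immediately produces the $\max\{0,\cdot\}$ in \eqref{eq:SchockExact2}. Moreover the \emph{same} permutation already places contiguous color ranges $[-M-N,-N-1]$ and $[-N,0]$ on the blocks $[-M,-1]$ and $[M,M+N]$, so \eqref{eq:SchockExact22} follows from the identical calculation with two extra thresholds. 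No separate $\pi'$ is needed.

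Your involution $\pi$ (swap $0\leftrightarrow M$, pair $[-M-N,-M-1]$ with $[M+1,M+N]$, fix $[-M,-1]\cup[1,M-1]$) does project to $\eta_0^{(2)}$, but it is \emph{not} a reversal on any interval, and the 0-Hecke operator $T_\pi$ for a non-longest element has no clean ``sort'' description on a general two-colour word. Your heuristic that the fixed positions $[-M,-1]$ ``absorb up to $M$ holes'' is not how $T_\pi$ actually works: the 0-Hecke action moves holes \emph{through} fixed points along the reduced-word paths, and which positions $\pi$ fixes is not what governs the outcome. Concretely, for $M\ge 3$ the restriction of your $\pi$ to $[0,M]$ is $(M,1,2,\dots,M-1,0)$, not the reversal, and $T_\pi$ on $[0,M]$ does not sort. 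So the ``absorb-then-spill'' claim, which you identify as the main obstacle, is really the entire content of the proof, and it is left completely open; there is no reason offered that it holds for even one reduced decomposition of your $\pi$, and the heuristic you give actually describes the paper's two-sort mechanism (with buffer $[-M-N,-N-1]$, not $[-M,-1]$) rather than yours. For \eqref{eq:SchockExact22} you additionally note that your $\pi$ does not give contiguous colour ranges for classes $2$ and $3$ and propose a second permutation $\pi'$, compounding the unverified combinatorics.

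The fix is simply to take the paper's permutation: two reversals on overlapping intervals. Then the reversed $W$-product is two consecutive sorts, and both \eqref{eq:SchockExact2} and \eqref{eq:SchockExact22} drop out with no bookkeeping at all.
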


\begin{proof}

In the proof of Proposition~\ref{prop:shock1} we did one ``sorting'' operation (in the interval between $-M_{-}$ and $M_{+}$), now we will do two of them. First, we sort all colors on the interval between $-N$ and $M+N$ in the opposite order. Second, we sort all colors on the interval between $-M-N$ and $-1$ in the opposite order.

If we say that negative colors are first class particles, color 0 is the second class particle, and positive colors are holes, we obtain the initial condition $\eta_0^{(2)} (z)$ as the result of this procedure.

If we say that colors $<-M-N-1$ are first class particles, colors between $-M-N$ and $-M-1$ are second class particles, colors between $-M$ and $0$ are third class particles, and positive colors are holes, we obtain the initial condition $\tilde \eta_0^{(2)} (z)$.

The analysis of the reversed time process (which makes these two sorting operations at the end of the continuous time process) is analogous to the proof of Proposition~\ref{prop:shock1}.

\end{proof}

\section{Main asymptotic results}\label{sectMainThmProof}

\subsection{Macroscopic picture}
Under hydrodynamic scaling the evolution of the particle density is a solution of the Burgers equation. Let us illustrate the macroscopic density as well the evolution of the shocks for the setting as in Proposition~\ref{prop5.1} and~\ref{prop5.2}.

In the setting of Proposition~\ref{prop5.1}, initially a bloc of particle occupy $(-\infty,-b t]$ and a second bloc starts from $[0,at]$. Our statement is about the fluctuation of the second class particle at time $t$.

\emph{Case 1: $0<a<b<1$.} The particle starting at the origin has already moved before the particle which started at $-bt$ arrives. Let $\tau$ denote the macroscopic time variable and $\xi$ the macroscopic position, that is, the microscopic time it $\tau t$ and the microscopic position is $\xi t$, where we think of $t\gg 1$.
If $-b+\tau=(\sqrt{\tau}-\sqrt{a})^2$, then particles starting from the left of the origin just reaches the block of particles started from the right of the origin. Therefore the shock starts developing at $\tau =\frac{(a+b)^2}{4a}$ at position $\xi=\frac{(a-b)^2}{4a}$. For any later time, the position of the shock is $\frac{(a-b)(a+b-2\tau)}{2(a+b)}t$ and the density has a discontinuity of height $\frac{a+b}{2\tau}$. By choosing $\tau=1$ we get the condition $b<2\sqrt{a}-a$.

\emph{Case 2: $0<b<a<1$.} In this case, at time $\tau=b$ particles from the left bloc of particles reaches the particle starting from the origin, which did not move yet. Effectively this corresponds of enlarging the first block of particles. At time $\tau=\frac{(a+b)^2}{4b}$ the density on the right side of the shock starts decreasing from $1$. Since we look at time $\tau=1$ we get the condition $b>2-a-2\sqrt{1-a}$.

In Figure~\ref{FigCharacteristicsOneShock} for an illustration of the particle densities at time $t$ for $0<a<b<1$.  The picture for $0<b\leq a<1$ with $b>2-a-2\sqrt{1-a}$ is similar, except that the position of the shock is to the left of the origin.
\begin{figure}
\begin{center}
\psfrag{b}[c]{$-b t$}
\psfrag{a}[c]{$a t$}
\psfrag{0}[c]{$0$}
\psfrag{vt}[l]{$v t$}
\psfrag{ct}[l]{$c t$}
\psfrag{rho}[c]{$\rho$}
\includegraphics[height=3cm]{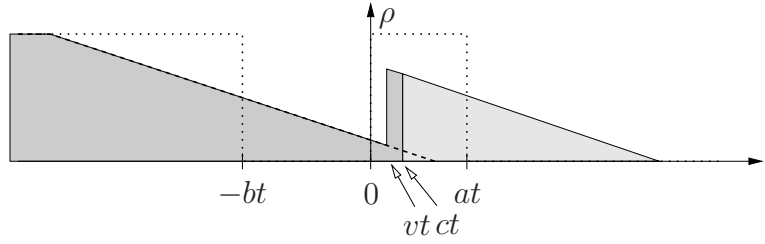}
\caption{Hydrodynamics for density of particle $\rho$ at time $t$ for one shock for the case $0<a< b<1$ with $b<2\sqrt{a}-a$. The dotted lines represent the initial condition. The particle which started at the origin is around $ct= (1-\sqrt{a})^2 t$, and the shock is around $vt=\frac{(a-b)(a+b-2)}{2(a+b)}t$. The two shaded regions correspond to the particles starting from the two blocks.}
\label{FigHydrodynamicsOneShock}
\end{center}
\end{figure}
In Figure~\ref{FigCharacteristicsOneShock} we illustrate the evolution of the shock for $0<a<b<1$.  For $0<b\leq a<1$, the picture is flipped at the imaginary axis, due to particle-hole symmetry.
\begin{figure}
\begin{center}
\psfrag{b}[c]{$-b t$}
\psfrag{a}[c]{$a t$}
\psfrag{0}[c]{$0$}
\psfrag{x}[c]{space}
\psfrag{t}[rb]{time}
\psfrag{shock}[l]{Shock}
\includegraphics[height=4cm]{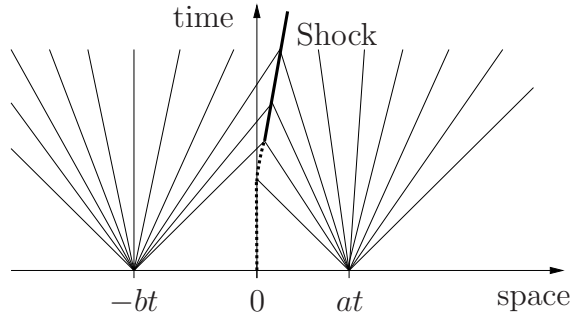}
\caption{Characteristics and shock for the one-shock case for $0<a<b<1$. The shock starts developing at macroscopic time $\frac{(a+b)^2}{4a}$ at macroscopic position $\frac{(a-b)^2}{4a}$. The position of the shock at macroscopic time $\tau$ is at macroscopic position $\frac{(a-b)(a+b-2\tau)}{2(a+b)}$ and the density has a discontinuity of height $\frac{a+b}{2\tau}$.}
\label{FigCharacteristicsOneShock}
\end{center}
\end{figure}

The hydrodynamic for the case of Proposition~\ref{prop5.2}, is similar until the time where the two shocks merges into a single shock. This happens if  $2(m-n)+(m+n)^2=0$, which corresponds to $n = 1 - m - \sqrt{1- 4m}$. Notice that if $m>1/4$, the two shocks did not have a chance to merge yet. In Figure~\ref{FigCharacteristicsTwoShocks} we illustrate the density of particles at time $t$. Note that the macroscopic picture of the density for $2(m-n)+(m+n)^2=0$ is the same as the one for $2(m-n)+(m+n)^2<0$, but the fluctuations are different.
\begin{figure}
\begin{center}
\psfrag{mmn}[c]{$-(m+n)t$}
\psfrag{mm}[c]{$-m t$}
\psfrag{0}[c]{$0$}
\psfrag{m}[c]{$m t$}
\psfrag{mn}[c]{$(m+n) t$}
\psfrag{rho}[c]{$\rho$}
\includegraphics[height=5cm]{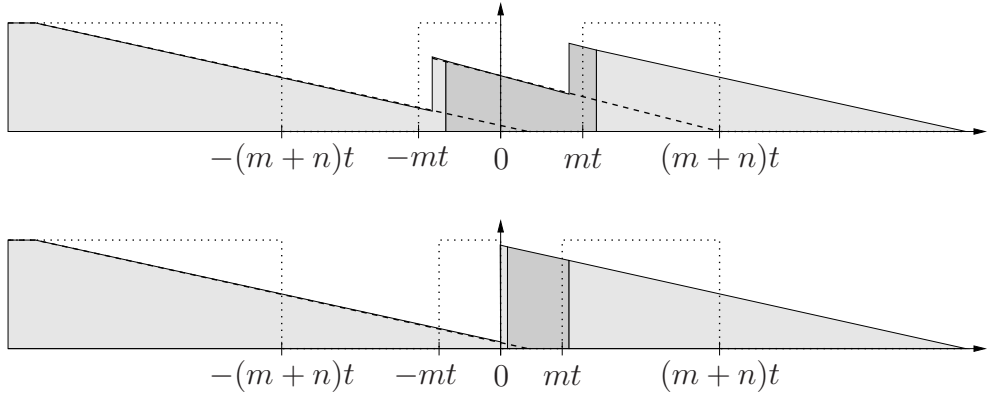}
\caption{Hydrodynamics for density of particle $\rho$ at time $t$ for two shocks. The dotted lines represent the initial condition. Here we have $0<m+n<1$ and $0<m<n<1$. The top picture is for $2(m-n)+(m+n)^2>0$. There are two shocks, the right one being the one we analyzed. The bottom picture is for $2(m-n)+(m+n)^2\leq 0$, where the two shocks have already merged.}
\label{FigHydrodynamicsTwoShocks}
\end{center}
\end{figure}

To see the difference, it is useful to look at the space-time picture of the characteristics and of the evolution of the shocks. Indeed, at the space-time point where the two shocks merge there are three incoming characteristic lines, while before and later at a shock position only two characteristics merge, see Figure\ref{FigCharacteristicsTwoShocks}.
\begin{figure}
\begin{center}
\psfrag{mmn}[c]{$-(m+n)t$}
\psfrag{mm}[c]{$-m t$}
\psfrag{0}[c]{$0$}
\psfrag{m}[c]{$m t$}
\psfrag{mn}[c]{$(m+n) t$}
\psfrag{x}[c]{space}
\psfrag{t}[rb]{time}
\includegraphics[height=6cm]{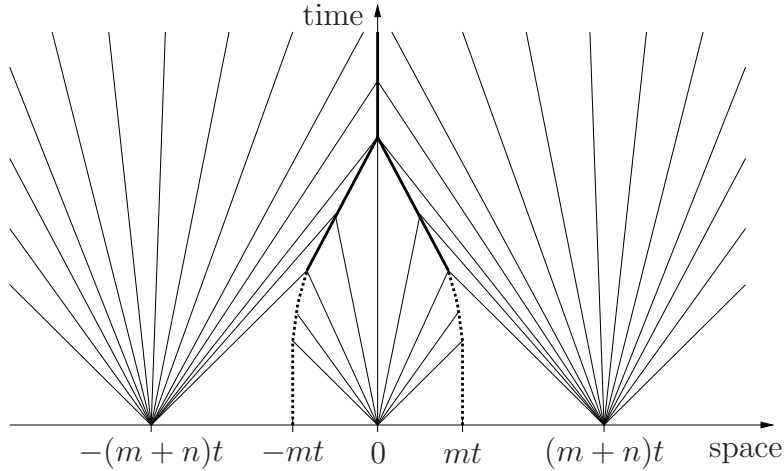}
\caption{Characteristics and shock for the one-shock case for $0<m+n<1$ and $0<m<n<1$. First two separate shocks are created and at macroscopic time $\frac{(m+n)^2}{2(n-m)}$ they meet at the origin. After that there is a single shock remaining at position $0$.}
\label{FigCharacteristicsTwoShocks}
\end{center}
\end{figure}

\subsection{One GUE-GUE shock}\label{sec:theorems1shock}

In this section we prove the results about the asymptotic behavior of one second class particle and the collection of second class particles in the case of one shock. There are four scalings in which the sizes of a block are proportional to $t^{\delta}$, with $\delta=1$, $\delta \in (2/3,1)$, $\delta = 2/3$, $\delta \in (0,2/3)$. Here we prove the asymptotic behavior for the first, second, and the fourth cases (the third case was analyzed previously in~\cite{BB19}).

Let us start with the case when the size of a block grows linearly in time.

\begin{prop}\label{prop5.1}
Let $a,b$ be two reals between 0 and 1, let $M_{+} = \lfloor a t \rfloor$, $M_{-} = \lfloor b t \rfloor$. Assume that $2-a-2\sqrt{a}<b<2\sqrt{a}-a$ and let $v :=\frac{(a-b) (a+b-2)}{2(a+b)}$. For any $s \in \R$,  we have
\begin{equation}
\lim_{t \to \infty}\frac{\mathfrak{f}^{(1)} (t) - vt}{t^{1/3}}\stackrel{d}{=}\frac{2}{a+b}(c_1 \xi_1 - c_2 \xi_2),
\end{equation}
and
\begin{equation}
\begin{aligned}
\label{eq:4444}
\lim_{t \to \infty} &\left(  \frac{\mathfrak{N}^{(1)}_{1} ( v t + s t^{1/3} ,t) - \frac{(1-v-b)^2}{4} t + \frac{(1-v-b)}{2} st^{1/3} }{t^{1/3}} , \frac{\mathfrak{N}^{(1)}_{2} ( v t + s t^{1/3} ,t) - a t}{t^{1/3}}  \right) \\ \stackrel{d}{=} &\left( - c_1 \xi_1 , \min \left( c_1 \xi_1 - c_2 \xi_2 - \tfrac{(a+b)s}{2} , 0 \right) \right),
\end{aligned}
\end{equation}
where $c_1=\frac{(1-(v+b)^2)^{2/3}}{2^{4/3}}$, $c_2=\frac{(1-(v-a)^2)^{2/3}}{2^{4/3}}$, and $\xi_1$ and $\xi_2$ are two independent GUE Tracy-Widom distributed random variables.
\end{prop}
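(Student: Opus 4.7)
I would begin from the exact finite-$t$ identities~\eqref{eq:SchockExact1} and~\eqref{eq:SchockExact11} of Proposition~\ref{prop:shock1}, which reduce both $\mathfrak{f}^{(1)}(t)$ and the pair $(\mathfrak{N}^{(1)}_1,\mathfrak{N}^{(1)}_2)$ to functionals of the step-IC count $\mathcal{N}$ at the two reference points $x_-:=x-M_+$ and $x_+:=x+M_-+1$. With $x=\lfloor vt + st^{1/3}\rfloor$ one has $x_\pm = \xi_\pm t + st^{1/3} + O(1)$ where $\xi_-=v-a$ and $\xi_+=v+b$; a short check shows that the hypothesis $2-a-2\sqrt{a}<b<2\sqrt{a}-a$ is exactly the condition $\xi_\pm\in(-1,1)$ strictly, i.e.\ both endpoints sit in the rarefaction region of the step IC where Johansson's Tracy--Widom asymptotics are nondegenerate.

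Next I would insert the classical one-point convergence for the step IC,
\begin{equation*}
\frac{\mathcal{N}(\xi t + r t^{1/3}, t) - \tfrac{(1-\xi)^2}{4}\,t + \tfrac{1-\xi}{2}\, r t^{1/3}}{t^{1/3}} \xrightarrow{d} -c(\xi)\, \chi, \qquad c(\xi)=\frac{(1-\xi^2)^{2/3}}{2^{4/3}},
\end{equation*}
with $\chi$ GUE Tracy--Widom distributed, at $\xi=\xi_\pm$ with amplitudes $c_2=c(\xi_-)$ and $c_1=c(\xi_+)$. The definition of $v$ is equivalent to the two bookkeeping identities $(1-\xi_-)^2-(1-\xi_+)^2=4a$ and $(1-\xi_-)-(1-\xi_+)=a+b$, so the deterministic part of $\mathcal{N}(x_-,t)-\mathcal{N}(x_+,t)$ cancels $M_+ + 1 = at + O(1)$ exactly and leaves the linear shift $-\tfrac{a+b}{2}\, s t^{1/3}$.

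The decisive input is the joint convergence with independence
\begin{equation*}
\Big(\tfrac{\mathcal{N}(x_-,t)-\text{mean}}{t^{1/3}},\ \tfrac{\mathcal{N}(x_+,t)-\text{mean}}{t^{1/3}}\Big) \xrightarrow{d} (-c_2 \xi_2,\, -c_1 \xi_1),
\end{equation*}
with $\xi_1,\xi_2$ independent GUE Tracy--Widom. Since $x_+-x_- = (a+b)t$ is macroscopic and, in particular, enormously larger than the KPZ correlation scale $t^{2/3}$, this should follow from the generic decoupling result (Theorem~\ref{thm:DecorrelationHeights}) developed later in the paper via backwards geodesics: the geodesics terminating at $(x_-,t)$ and $(x_+,t)$ at, say, time $t/2$ are each localised in a window of size $O(t^{2/3})$ around their respective characteristic, so with probability $1-o(1)$ they depend on disjoint portions of the driving Poisson noise. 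Combining everything yields $\Pb(\mathfrak{f}^{(1)}(t)\ge vt + s t^{1/3}) \to \Pb(c_1\xi_1 - c_2\xi_2 \ge \tfrac{a+b}{2}\, s)$, which is the first claim; substituting into~\eqref{eq:SchockExact11} and using $M_+ + 1 - at = O(1)$ then produces~\eqref{eq:4444}, the first coordinate being immediately $-c_1\xi_1$ and the second, after subtracting $at$ and dividing by $t^{1/3}$, converging to $\min(c_1\xi_1 - c_2\xi_2 - \tfrac{a+b}{2}s, 0)$.

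The main obstacle is precisely this asymptotic independence at two macroscopically separated endpoints. The one-point Tracy--Widom convergence is classical (Johansson) and the algebraic bookkeeping around $v$ is routine; the real work is showing that the fluctuations at $x_-$ and $x_+$ decouple in the limit. That is the purpose of the backwards-geodesic construction, slow-decorrelation estimate, and localisation bound developed in Section~\ref{sec:Geodesics}, which together yield Theorem~\ref{thm:DecorrelationHeights}.
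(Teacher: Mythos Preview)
Your proposal is correct and follows essentially the same route as the paper: start from the exact identities of Proposition~\ref{prop:shock1}, feed in the joint Tracy--Widom asymptotics at the two macroscopically separated points (which the paper packages as Corollary~\ref{cor:step1and13}, itself a consequence of Theorem~\ref{thm:DecorrelationHeights}), observe that the definition of $v$ is precisely what makes the leading deterministic terms cancel against $M_+$, and collect the remaining $t^{1/3}$-scale pieces. Your identification of the hypothesis on $a,b$ with the non-degeneracy condition $\xi_\pm\in(-1,1)$ and of the asymptotic independence as the only substantive input is exactly right.
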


\begin{remark}
\label{rem:4444}
In words, the result tells us the following. Let us start with one second class particle scenario. The second class particle indicates the position of the shock, which informally can be thought of as the position in which the particles from the left (infinite) block start to get affected by the presence of the right block. On the law of large numbers scale, this position has nontrivial speed $v$, which can be obtained from the hydrodynamics for the two blocks. We are interested in fluctuations of this position. These fluctuations are affected by two Tracy-Widom fluctuations of our two blocks. In the current scaling these fluctuations turn out to be asymptotically independent.

Our second scenario is to treat all particles from the right block as second class particles. We study the limit behavior of the joint distribution of the multi-colored counting functions in the neighborhood of the shock. The fluctuations are again governed by two independent Tracy-Widom distributions, and we see two different cases. In the first one, the last second class particle is to the right of the reference point --- this corresponds to the case when the minimum in the right-hand side of \eqref{eq:4444} is attained at 0. In the second case, when the minimum is attained not at 0, some second class particles are to the left of the reference point. Our result asserts that the amount of such particles is of order $t^{1/3}$ and, moreover, gives the precise distribution of this amount.
\end{remark}

\begin{proof}[Proof of Proposition \ref{prop5.1}]

Corollary~\ref{cor:step1and13} implies
\begin{equation}
\mathcal{N} ( vt - at + s t^{1/3},t) = \frac{(1-v+a)^2}{4} t + \left( \eta_2 + \frac{(1 - v +a) s}{2} \right) t^{1/3} + o( t^{1/3})
\end{equation}
with $\eta_2=-c_2\xi_2$, and
\begin{equation}
\mathcal{N} \left( vt + bt + s t^{1/3}, t \right) = \frac{(1-v-b)^2}{4} t + \left( \eta_1 + \frac{(1 - v-b) s}{2} \right) t^{1/3} + o( t^{1/3}),
\end{equation}
with $\eta_1=-c_1 \xi_1$. The value of $v$ is chosen to satisfy
\begin{equation}
 \frac{(1-v+a)^2}{4} t- \frac{(1-v-b)^2}{4} t=at.
\end{equation}
Using Proposition~\ref{prop:shock1} and collecting terms, we arrive at the statement.

\end{proof}

\begin{prop}\label{prop5.2}
Let $a,b>0$, $2/3< \delta <1$ be fixed real numbers. Consider $M_{+} = \lfloor a t^{\delta} \rfloor$, $M_{-} = \lfloor b t^{\delta} \rfloor$, $v = \frac{b-a}{b+a}$, $r = \frac{a+b}{2}$. We have
\begin{equation}
\lim_{t \to \infty} \frac{\mathfrak{f}^{(1)} (t) - vt - r t^{\delta}}{t^{4/3 - \delta}}\stackrel{d}{=}\frac{2}{a+b}c_3(\xi_1-\xi_2),
\end{equation}
where $c_3= \frac{(1-v^2)^{2/3}}{2^{4/3}}$, and $\xi_1$ and $\xi_2$ are two independent GUE Tracy-Widom distributed random variables. Furthermore,
\begin{equation}
\begin{aligned}
\lim_{t \to \infty} &\left( \frac{\mathfrak{N}^{(1)}_{1} ( v t + r t^{\delta} + s t^{4/3-\delta} ,t) - \mu}{t^{1/3}} ,  \frac{\mathfrak{N}^{(1)}_{2} ( v t + r t^{\delta} + s t^{4/3-\delta} ,t) - a t^{\delta}}{t^{1/3}}  \right)\\
\stackrel{d}{=} &\left( -c_3 \xi_1 , \min\{c_3(\xi_1 - \xi_2) - \tfrac{a+b}{2} s , 0\}\right),
\end{aligned}
\end{equation}
where $\mu=\frac{(1-v)^2}{4}t - \frac{(1-v)(r+b)}{2} t^{\delta} - \frac{(r+b)^2}{4} t^{2 \delta-1} - \frac{(1-v) s}{2} t^{4/3-\delta} - \frac{(r+b) s}{2} t^{1/3}$.
\end{prop}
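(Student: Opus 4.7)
\emph{Proof plan.} The argument follows the same outline as the proof of Proposition~\ref{prop5.1}: use the exact identity of Proposition~\ref{prop:shock1} to reduce everything to the asymptotics of the step-TASEP counting function $\mathcal{N}(\cdot,t)$, then invoke the asymptotic expansions of Section~\ref{sec:StepAsymp} (Corollary~\ref{cor:step1and13}) together with the decoupling result Theorem~\ref{thm:DecorrelationHeights}. Setting $x = vt + r t^{\delta} + s t^{4/3-\delta}$, the two arguments appearing in \eqref{eq:SchockExact1} and \eqref{eq:SchockExact11} are $y_{1} := x - M_{+}$ and $y_{2} := x + M_{-}+1$, both at macroscopic position $v$ with corrections of order $t^{\delta}$.

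The central new feature compared with case (a) is that $y_{2} - y_{1} = (a+b) t^{\delta} + O(1)$ is much larger than the correlation length $t^{2/3}$ because $\delta > 2/3$, yet much smaller than $t$. This is exactly the regime in which Theorem~\ref{thm:DecorrelationHeights} produces asymptotic independence of the $t^{1/3}$-scale fluctuations at the two points. Combined with Corollary~\ref{cor:step1and13}, one obtains
\begin{equation*}
\mathcal{N}(y_{i},t) = E[\mathcal{N}(y_{i},t)] - c_{3}\,\xi_{i}\, t^{1/3} + o(t^{1/3}), \qquad i=1,2,
\end{equation*}
with $\xi_{1},\xi_{2}$ independent GUE Tracy-Widom variables and common Tracy-Widom scaling $c_{3} = (1-v^{2})^{2/3}/2^{4/3}$ (common because both points are at macroscopic location $v$). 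Taylor expanding $E[\mathcal{N}(y,t)] \simeq t(1-y/t)^{2}/4$ around $y = vt$ then produces contributions at the scales $t$, $t^{\delta}$, $t^{2\delta-1}$, $t^{4/3-\delta}$, and $t^{1/3}$, where the intermediate scale $t^{2\delta-1}$ lies strictly between the LLN and the fluctuation scale precisely because $\delta \in (2/3,1)$. The values $v$ and $r$ in the statement are determined by requiring that the $t^{\delta}$ and $t^{2\delta-1}$ contributions to $E[\mathcal{N}(y_{1},t) - \mathcal{N}(y_{2},t)] - M_{+}$ vanish, which leaves only the deterministic $t^{1/3}$ shift $-(a+b)s/2$ together with the independent Tracy-Widom fluctuations.

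Combining, the condition $\mathcal{N}(y_{1},t)-\mathcal{N}(y_{2},t) \ge M_{+}+1$ rescales in the limit to $c_{3}(\xi_{1}-\xi_{2}) - (a+b)s/2 \ge 0$, giving the first claim after multiplication by $2/(a+b)$. For the joint statement, \eqref{eq:SchockExact11} expresses $\mathfrak{N}^{(1)}_{1}(x,t)$ as $\mathcal{N}(y_{2},t)$, which converges to $-c_{3}\xi_{1}$ after recentering by the Taylor expansion $\mu$ of its mean; the second coordinate $\mathfrak{N}^{(1)}_{2}(x,t)$ is the previous difference truncated at $M_{+}+1$, and the same rescaling yields the stated $\min\{c_{3}(\xi_{1}-\xi_{2}) - (a+b)s/2,\,0\}$.

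The main obstacle is the asymptotic independence of the fluctuations of $\mathcal{N}$ at the two $t^{\delta}$-separated points, which is the content of Theorem~\ref{thm:DecorrelationHeights} and is where the geodesic localization of Section~\ref{sec:Geodesics} does all of the heavy lifting. Once joint convergence to independent GUE Tracy-Widom variables is available, the remainder is Taylor-expansion bookkeeping; the only point requiring attention is to track the $t^{2\delta-1}$ correction, which is what forces the extra centering shift $r t^{\delta}$ and makes cases (a) and (b) formally distinct despite producing the same functional form for the limit.
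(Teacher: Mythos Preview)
Your approach is essentially the same as the paper's: reduce via Proposition~\ref{prop:shock1} to step-TASEP asymptotics, use the decoupling theorem to get independent Tracy--Widom fluctuations at the two $t^{\delta}$-separated points, and Taylor-expand the deterministic centering. The paper packages the second and third steps into a single citation of Corollary~\ref{cor:stepDeltaLarge}, which is tailored exactly to the scaling $\alpha t + \beta t^{\delta} + \gamma t^{4/3-\delta}$, whereas you invoke Theorem~\ref{thm:DecorrelationHeights} directly and redo the expansion by hand. That is fine, and in fact Corollary~\ref{cor:stepDeltaLarge} is proved precisely this way.

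One imprecision: Corollary~\ref{cor:step1and13} is stated for macroscopically distinct $\alpha_{i}$ with only $t^{1/3}$ perturbations, so it does not literally cover your situation where both points sit at macroscopic location $v$ with a $t^{\delta}$ separation. The result you actually need (and essentially rederive) is Corollary~\ref{cor:stepDeltaLarge}; it would be cleaner to cite that, or to cite only Theorem~\ref{thm:DecorrelationHeights} and carry out the expansion explicitly as you sketch. Either way the argument goes through, and your identification of the $t^{2\delta-1}$ term as the new feature distinguishing this case from Proposition~\ref{prop5.1} is exactly right.
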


\begin{remark}
The qualitative behavior in this case is somewhat similar to the one described in Proposition \ref{prop5.1} and Remark \ref{rem:4444}, since the Tracy-Widom fluctuations generated by two blocks remain to be independent. Note, however, that the law of large numbers for the shock position is more delicate here, and that the fluctuations of the second class particle are of nontrivial order $t^{4/3 - \delta}$.
\end{remark}

\begin{proof}[Proof of Proposition \ref{prop5.2}]
Similarly to the previous proposition, we combine Proposition~\ref{prop:shock1}, Corollary~\ref{cor:stepDeltaLarge}, and collect terms, using that $(a+b)(1-v) = 2a$.
\end{proof}

\begin{prop}\label{prop5.3}
Let $a,b>0$, $0< \delta <2/3$ be fixed. Set $M_{+} = \lfloor a t^{\delta} \rfloor$, $M_{-} = \lfloor b t^{\delta} \rfloor$, and $v = \frac{b-a}{b+a}$. Then
\begin{equation}
\lim_{t \to \infty} \frac{\mathfrak{f}^{(1)} (t) - v t}{t^{1- \delta/2}} \stackrel{d}{=} G \left( 0, \frac{4ab}{(a+b)^3} \right)
\end{equation}
and
\begin{equation}
\lim_{t \to \infty} \frac{\mathfrak{N}^{(1)}_{2} ( v t + s t^{1 -\delta/2} ,t) - a t^{\delta}}{t^{\delta/2}} \stackrel{d}{=}
\min \left\{ \frac{s(a+b)}{2} + \sqrt{\frac{ab}{a+b}}G(0,1), 0 \right\},
\end{equation}
where the convergence is in distribution, and $G(0,\sigma^2)$ stands for the Gaussian random variable with zero mean and variance $\sigma^2$.
\end{prop}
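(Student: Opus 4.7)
The plan is to follow the template of Propositions~\ref{prop5.1} and~\ref{prop5.2}: apply the exact identities of Proposition~\ref{prop:shock1} to rewrite both distributions in terms of the step-IC counting function $\mathcal{N}$, and then substitute the appropriate asymptotic expansion for $\mathcal{N}$. The new ingredient corresponds to the change of regime. When $\delta<2/3$ the separation $M_+ + M_- + 1\sim(a+b)t^\delta$ between the two evaluation points on the right-hand side of \eqref{eq:SchockExact1} is strictly \emph{smaller} than the correlation length $t^{2/3}$. In that range, the individual Tracy--Widom fluctuations at the two points are essentially identical and cancel in the difference, and the relevant asymptotic statement for $\mathcal{N}$ is a local Gaussian CLT for the increments across an interval of length $O(t^\delta)$ near the characteristic, with the stationary variance per unit length $\rho(1-\rho)$ at density $\rho=(1-v)/2=a/(a+b)$.

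Concretely, set $x=vt+st^{1-\delta/2}$, $y_1=x-M_+$, $y_2=x+M_-+1$. A direct expansion of $\E[\mathcal{N}(y,t)]=(t-y)^2/(4t)$, combined with the identity $(1-v)(a+b)/2=a$ that cancels the leading $at^\delta$ with $M_++1$, yields
\[
\E[\mathcal{N}(y_1,t)-\mathcal{N}(y_2,t)]-(M_++1)\;=\;-\frac{(y_2-y_1)(y_1+y_2)}{4t}+O(1)\;=\;-\frac{s(a+b)}{2}\,t^{\delta/2}+o(t^{\delta/2}),
\]
where the residual contribution of order $t^{2\delta-1}$ from the $(M_--M_+)$ portion of $y_1+y_2$ is $o(t^{\delta/2})$ exactly when $\delta<2/3$, which is what makes the regime consistent. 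Feeding in the local Gaussian CLT for step-IC $\mathcal{N}$ (Theorem~\ref{thm:LocalGaussianHeight} in Section~\ref{sec:StepAsymp}),
\[
\frac{\mathcal{N}(y_1,t)-\mathcal{N}(y_2,t)-\E[\mathcal{N}(y_1,t)-\mathcal{N}(y_2,t)]}{t^{\delta/2}}\;\stackrel{d}{\longrightarrow}\;\sqrt{\tfrac{ab}{a+b}}\,G(0,1),
\]
and plugging into \eqref{eq:SchockExact1} and \eqref{eq:SchockExact11} then produces both statements modulo straightforward algebra (the factor $(2/(a+b))^2$ in the rescaling of $\mathfrak{f}^{(1)}(t)-vt$ turns the variance $ab/(a+b)$ into $4ab/(a+b)^3$, and subtracting $at^\delta$ inside the $\min$ in \eqref{eq:SchockExact11} uses $M_++1-at^\delta=O(1)=o(t^{\delta/2})$).

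The whole non-trivial work sits in the local Gaussian CLT input for $\mathcal{N}$. The challenge is that the single-point fluctuations of $\mathcal{N}$ are Tracy--Widom on scale $t^{1/3}$, yet we need centered \emph{Gaussian} behavior for the \emph{increment} across a gap of only $t^\delta\ll t^{2/3}$ at the much smaller scale $t^{\delta/2}$. The natural route, echoing~\cite{CP15b,Pim17} for last passage percolation, is to use the comparison lemma (Lemma~\ref{lemComparison}) to couple the step-IC TASEP with the stationary TASEP of density $\rho=a/(a+b)$; for the stationary process the increment is a sum of Bernoulli$(\rho)$ variables and the CLT is elementary. The delicate step is to locate the backwards characteristics emanating from $(y_i,t)$ under both dynamics at time~$0$ with enough precision that the unavoidable discrepancy between the two initial configurations only affects the increment by $o(t^{\delta/2})$; this is exactly the purpose of the backwards-geodesics machinery assembled in Section~\ref{sec:Geodesics}.
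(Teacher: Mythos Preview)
Your proposal is correct and follows essentially the same route as the paper: combine the exact identity of Proposition~\ref{prop:shock1} with the local Gaussian increment result for step-initial-condition TASEP, and collect terms. The paper packages the Gaussian input as Corollary~\ref{cor:stepDeltaSmall} (the $\mathcal{N}$-version of Theorem~\ref{thm:LocalGaussianHeight}), which is exactly the statement you invoke, and your identification of the centering, the variance $\rho(1-\rho)(a+b)=ab/(a+b)$, and the rescaling factor $(2/(a+b))^2$ all match the paper's computation.
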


\begin{remark}
Compared to the previous two cases, the size of the right block is ``smaller''. This turns out to imply that the shock created by it can be found in a larger region (of order $t^{1-\delta/2}$). In particular, we see that if $\delta \to 0$, then the shock fluctuates on the scale $t$, which corresponds to known results about the order of fluctuations of a second class particle for fixed (not depending on time) initial configurations, see \cite{FK95}, \cite{BB19}. Note that the precise limiting distribution of the second class particle in the $\delta=0$ case is very sensitive to an initial configuration, and it is unclear to us whether the result about the double limit $\delta \to 0$, $t \to \infty$ can give information about the  limiting distributions in the $\delta=0$ case.
\end{remark}

\begin{proof}[Proof of Proposition \ref{prop5.3}]
Let $x = vt + s t^{1 - \delta/2}$, for $s \in \R$. By Corollary~\ref{cor:stepDeltaSmall} we have
\begin{multline}
\mathcal{N} \left( vt + s t^{1 - \delta/2} - a t^{\delta}, t \right) - \mathcal{N} ( vt + s t^{1 - \delta/2} + b t^{\delta} ,t) = t^{\delta} \frac{(1-v)(a+b)}{2} + t^{\delta/2} \frac{s(a+b)}{2} \\ - t^{\delta/2} G(0,1) \frac{\sqrt{(a+b)(1-v^2)}}{2}.
\end{multline}
Plugging the expression for $v$ and using Proposition~\ref{prop:shock1}, we arrive at the statement.
\end{proof}

\subsection{Two colliding GUE-GUE shocks}\label{sec:theorems2shock}
In this section we analyze the behavior of the second class particle in the two colliding shocks. Proposition~\ref{prop:shock2} and results from Section~\ref{sec:StepAsymp} allow to find the behavior of the second class particle, and also the height function of a process with three classes by a direct computation for all four possible scalings from Section~\ref{sec:theorems1shock}. We restrict ourselves with stating the results for the two scalings as the remaining can be computed analogously.

Recall the process $\eta_t^{(2)} (z)$ and denote the position of the second class particle in it as $\mathfrak{f}^{(2)}(t)$.

\begin{prop}\label{prop:2ndClass2shocksTW}
Let us fix $0 < m \le n <1$ with $n<2\sqrt{m}-m$. let $M = \lfloor m t \rfloor$, $N = \lfloor n t \rfloor$, and let $\xi_1, \xi_2, \xi_3$ be three independent GUE Tracy-Widom distributions. One has the following cases.

(a) If $2 (m - n) + (m+n)^2 >0$, set  $v := \frac{m -n + (m+n)^2/2}{m+n}>0$. We have
\begin{equation}
\lim_{t \to \infty} \frac{\mathfrak{f}^{(2)} (t) - v t}{t^{1/3}} \stackrel{d}{=} \frac{(1-v^2)^{2/3}}{2^{1/3} (m+n)} \xi_2 - \frac{(1-(v-m-n)^2)^{2/3}}{2^{1/3} (m+n)} \xi_1.
\end{equation}

(b) If $2 (m - n) + (m+n)^2 = 0$, which is equivalent to $n = 1 - m - \sqrt{1- 4m}$ and requires $m<1/4$, one has
\begin{equation}
\lim_{t \to \infty} \Pb \left( \frac{\mathfrak{f}^{(2)} (t) }{t^{1/3}} \ge s \right)\stackrel{d}{=}
\Pb \left( c_2\xi_2-c_1\xi_1-a_1+\max\{c_1 \xi_3- c_2\xi_2-a_1,0\} \geq 0 \right),
\end{equation}
where $c_1=\frac{(1-(m+n)^2)^{2/3}}{2^{4/3}}$, $c_2=\frac{1}{2^{4/3}}$, $a_1=\frac{(m+n)s}{2}$. Furthermore,
\begin{equation}\label{eq3.13b}
\begin{aligned}
\lim_{t \to \infty} &\left( \frac{\mathfrak{N}^{(2)}_{1} (s t^{1/3} ,t) - \frac{(1-m-n)^2}{4} t }{t^{1/3}} , \frac{\mathfrak{N}^{(2)}_{2} (s t^{1/3} ,t) - m t}{t^{1/3}}, \frac{\mathfrak{N}^{(2)}_{3} (s t^{1/3} ,t) - n t}{t^{1/3}}  \right) \\
\stackrel{d}{=} &\big( -a_2 - c_1 \xi_3 , \min\{c_1  \xi_3 - c_2 \xi_2 - a_1 , 0\}, \\ &\quad\min\big\{c_2 \xi_2 - c_1 \xi_1-a_1 + \max\{c_1 \xi_3 - c_2 \xi_2 - a_1 , 0\}, 0\big\}  \big),
\end{aligned}
\end{equation}
where $a_2=\frac{(1-m-n)s}{2}$.

(c) If $2 (m - n) + (m+n)^2 < 0$ with $m<1/4$, then
\begin{equation}
\lim_{t \to \infty} \frac{\mathfrak{f}^{(2)} (t)}{t^{1/3}} \stackrel{d}{=}  \frac{(1-(m+n)^2)^{2/3}}{2^{4/3} (m+n)}\left( \xi_3 - \xi_1 \right) .
\end{equation}

\end{prop}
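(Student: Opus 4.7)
The plan is to combine the exact formula of Proposition~\ref{prop:shock2} with joint asymptotics for the step-initial-condition count $\mathcal{N}(\cdot,t)$ at three macroscopically separated observation points. Writing $x = vt + st^{1/3}$ with $M = \lfloor mt\rfloor$ and $N=\lfloor nt\rfloor$, the three positions $x-M-N$, $x+1$, $x+M+N+1$ lie at macroscopic coordinates $v-(m+n)$, $v$, $v+(m+n)$, hence are separated by distances of order $t$. By the one-point step asymptotics (Corollary~\ref{cor:step1and13}) combined with the decorrelation result (Theorem~\ref{thm:DecorrelationHeights}), the three centered and $t^{1/3}$-rescaled values of $\mathcal{N}$ converge jointly to $c(\xi_L)\chi_L$, $c(\xi_M)\chi_M$, $c(\xi_R)\chi_R$, where $\chi_L,\chi_M,\chi_R$ are independent GUE Tracy--Widom variables and $c(\xi)=(1-\xi^2)^{2/3}/2^{4/3}$. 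This is the only analytic input; the rest is an algebraic case split.

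The case discrimination is driven by the sign of the leading-order coefficient of the inner argument $\mathcal{N}(x+1,t)-\mathcal{N}(x+M+N+1,t)-M$, which a direct computation identifies as $-\tfrac14[2(m-n)+(m+n)^2]t$. In case (a) this is strictly negative, so $\max\{0,\cdot\}=0$ with probability tending to $1$ and the event in \eqref{eq:SchockExact2} collapses to $\mathcal{N}(x-M-N,t)-\mathcal{N}(x+1,t)\ge N+1$. Balancing the order-$t$ terms pins down $v=(m-n+(m+n)^2/2)/(m+n)$, and the two surviving Tracy--Widom fluctuations at the positions $v-(m+n)$ and $v$ combine into the stated difference. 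In case (c) the coefficient is strictly positive, so $\max\{0,A\}=A$ with probability tending to $1$; the middle evaluation $\mathcal{N}(x+1,t)$ cancels, the event collapses to $\mathcal{N}(x-M-N,t)-\mathcal{N}(x+M+N+1,t)-M\ge N+1$, leading-order balance gives $v=0$, and the two remaining Tracy--Widoms at the symmetric positions $\pm(m+n)$, sharing the common prefactor $c_1$, produce the stated antisymmetric formula.

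Case (b) is the critical one, where the above coefficient vanishes identically: the argument of the max is itself of order $t^{1/3}$, with rescaled limit $c_1\xi_3-c_2\xi_2-a_1$. All three Tracy--Widom variables now contribute, and continuity of $(u,w)\mapsto u+\max\{w,0\}$, applied to the joint convergence of Step~1, yields by the continuous mapping theorem the nested-max formula for $\mathfrak{f}^{(2)}(t)$. For the joint law \eqref{eq3.13b} of the colored counts I would feed the same joint convergence into \eqref{eq:SchockExact22}: the first component is a single $\mathcal{N}$ at position $x+M+N+1$ and gives $-a_2-c_1\xi_3$ directly, while the truncations $\min\{\cdot,M\}$ and $\min\{\cdot,N+1\}$ become, after subtracting the matching LLN centering $M$ and $N$ respectively, of the form $\min\{Y_t\cdot t^{1/3},O(1)\}$, whose $t^{1/3}$-rescaled limits are $\min\{Y,0\}$ (the constants $+1$ and $-N$ absorbed into $o(t^{1/3})$). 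A further continuous-mapping step then produces the nested min/max chain \eqref{eq3.13b}.

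The main obstacle is really just the joint independence in Step~1: marginal convergence at each observation point is classical, but independence across three points separated by $\Theta(t)$ rests crucially on Theorem~\ref{thm:DecorrelationHeights}. Once that is available, the remaining ingredients are routine: algebraic balance of leading orders, continuous mapping to handle the max/min operations, and the mild remark that in cases (a) and (c) the wrong-branch probability for $\max\{0,\cdot\}$ is $o(1)$ because its argument drifts at speed $\Theta(t)$ while fluctuating only at scale $\Theta(t^{1/3})$.
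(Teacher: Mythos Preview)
Your proposal is correct and follows essentially the same route as the paper: combine the exact identity of Proposition~\ref{prop:shock2} with the joint step-TASEP asymptotics of Corollary~\ref{cor:step1and13} (which already packages Theorem~\ref{thm:DecorrelationHeights}), then split into three cases according to which branch of the inner $\max$ dominates at leading order. The paper's own proof is a three-line sketch stating exactly this; your write-up simply supplies the algebraic details (leading-order balance, continuous mapping for the nested $\min/\max$) that the paper leaves implicit, with one minor imprecision: the coefficient $-\tfrac14[2(m-n)+(m+n)^2]t$ you quote for the inner argument is its value at $v=0$, whereas in case~(a) you must re-verify negativity at the shifted $v=\frac{m-n+(m+n)^2/2}{m+n}$ (where it becomes $-\tfrac12[2(m-n)+(m+n)^2]t$, still negative, so the conclusion stands).
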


\begin{remark}
Three cases demonstrate the evolution of one second class particle in time. Let us describe the meaning of our assumptions and limiting theorems for the process with one second class particle.
The condition $n\leq 1$ implies that at time $t$ the first particle to the right of the second class particle can move, while $m\leq 1$ implies that particles starting to the left of the second class particle reached its starting position. We have three blocks which generate two shocks. If $m > n$, then the shocks will never meet during the evolution, so we assume $m \le n$. We also need to assume $m+n<1$ in order to guarantee that the shocks meet before time $t$.

Case (a) corresponds to the beginning of the evolution under our assumptions: the second class particle moves to the left with constant speed, and its fluctuations are given as the difference of two independent GUE Tracy-Widom distributions. The particle does not feel the left-most (half-infinite) block of particles, so this is the one shock case produced by the second and the third block.

Case (b) is the critical one, it corresponds to the colliding of the two shocks. The second class particle is in the neighborhood of $0$, and its $t^{1/3}$ fluctuations depend on three independent GUE Tracy-Widom distributions, which are generated by three blocks of particles.

Finally, case (c) is again the one shock case produced by the left-most and the right-most blocks of particles; the middle block of particles completely overtakes the second class particle at this moment. Thus the fluctuations are again given by a difference of two independent GUE Tracy-Widom distributions.

We also study the process with three classes of particles corresponding to three initial blocks. We give the statement for height functions only in case (b), see \eqref{eq3.13b}, as it is the most interesting, while simpler statements for cases (a) and (c) are omitted. The result shows how many second and third class particles are overtaken by first class particles in this critical regime.
\end{remark}

\begin{remark}
A similar result but for a different observable has been obtained with different methods in~\cite{FN19}. In that work, one analyzes the fluctuations of tagged particles in the case where two shocks with GOE Tracy-Widom distributed fluctuations merge.
\end{remark}

\begin{proof}[Proof of Proposition~\ref{prop:2ndClass2shocksTW}]
We combine Proposition~\ref{prop:shock2} and Corollary~\ref{cor:step1and13} in order to compute the desired distributions. Case (a) appears
when in the main formula \eqref{eq:SchockExact2} 0 is surely the largest term inside the maximum. Case (c) appears when inside the maximum $h(x+M+N,t)-h(x,t)-M$ is always the largest term, while case (b) is a critical one. The rest is a direct computation.
\end{proof}

In the next proposition we restrict ourselves with the case when $M=N$ in order to somewhat simplify expressions. The statement for $M \ne N$ can be done by analogous computations.

\begin{prop}
\label{prop:2ndClass2shocksAiry}
Let us fix $m \ge 0$, and let $M = N = \lfloor m t^{2/3} \rfloor$. Let ${\cal A}(s)=2^{-4/3}{\cal A}_2(2^{-1/3} s)$, where ${\cal A}_2$ is the Airy$_2$ process.
Then we have
\begin{equation}
\begin{aligned}
\lim_{t \to \infty} \Pb \left( \frac{\mathfrak{f}^{(2)} (t)}{t^{2/3}} \ge s \right)
= &\Pb \Big( m(m-s) - \mathcal A (s-2m) +\mathcal A (s) \\ &+ \max \left\{-m (m+s) -\mathcal A (s) +\mathcal A (2m+s),0 \right\} \ge 0 \Big).
\end{aligned}
\end{equation}
and
\begin{equation}
\begin{aligned}
\lim_{t \to \infty}& \left( \frac{\mathfrak{N}^{(2)}_{1} (s t^{1/3} ,t) - t/4 + \frac{2m+s}{2} t^{2/3} }{t^{1/3}} , \frac{\mathfrak{N}^{(2)}_{2} (s t^{1/3} ,t) - m t^{2/3}}{t^{1/3}}, \frac{\mathfrak{N}^{(2)}_{3} (s t^{1/3} ,t) - m t^{2/3}}{t^{1/3}}  \right) \\
\stackrel{d}{=}&\bigg( \frac{(2m+s)^2}{4} - \mathcal A (2m+s), \min \left\{ - m (m+s) -\mathcal A (s)+ \mathcal A(2m+s), 0 \right\},
 \\ &\quad\min \big\{m(m-s) -\mathcal A(s-2m) + \mathcal A(s) \\& \quad\quad + \max\left\{- m (m+s) -\mathcal A(s) + \mathcal A(2m+s),0  \right\}, 0 \big\} \bigg).
\end{aligned}
\end{equation}

\begin{remark}
In this Proposition we study the colliding of three shocks in the KPZ scaling, which leads to the presence of three sections of Airy process, in the limit, from the three initial blocks of particles.
\end{remark}

\begin{proof}[Proof of Proposition \ref{prop:2ndClass2shocksAiry}]
Proposition~\ref{prop:TasepAiry} implies that
\begin{equation}
\begin{aligned}
&\mathcal{N} \left( \lfloor (2m+s) t^{2/3} \rfloor, t \right) = \frac{t}{4} - \frac{2m+s}{2} t^{2/3} + \frac{(2m+s)^2}{4} t^{1/3} - \mathcal A(2m+s) t^{1/3} + o( t^{1/3}),\\
&\mathcal{N} \left( \lfloor s t^{2/3} \rfloor, t \right) = \frac{t}{4} - \frac{s}{2} t^{2/3} + \frac{s^2}{4} t^{1/3} - \mathcal A(s) t^{1/3} + o( t^{1/3}),\\
&\mathcal{N} \left( \lfloor (s-2m) t^{2/3} \rfloor, t \right) = \frac{t}{4} - \frac{s-2m}{2} t^{2/3} + \frac{(s-2m)^2}{4} t^{1/3} - \mathcal A(s-2m) t^{1/3} + o( t^{1/3}),
\end{aligned}
\end{equation}
Using these expressions and Proposition~\ref{prop:shock2}, we arrive at the expressions from the statement.

\end{proof}

\end{prop}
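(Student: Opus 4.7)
The plan is to combine the exact finite-time identity of Proposition~\ref{prop:shock2} with the KPZ-scale asymptotics of the step TASEP height function given by Proposition~\ref{prop:TasepAiry}, in exactly the same spirit as the proof of Proposition~\ref{prop:2ndClass2shocksTW}. The essential new feature compared to the Tracy-Widom regime there is that since the block size $M=N=\lfloor m t^{2/3}\rfloor$ is now at the KPZ correlation scale, the three quantities $\mathcal{N}(x-M-N,t)$, $\mathcal{N}(x+1,t)$, $\mathcal{N}(x+M+N+1,t)$ appearing in \eqref{eq:SchockExact2} are evaluated at points separated by distances of order $t^{2/3}$. Hence they do not asymptotically decouple; instead, Proposition~\ref{prop:TasepAiry} delivers their joint convergence to three correlated values $\mathcal{A}(s-2m)$, $\mathcal{A}(s)$, $\mathcal{A}(s+2m)$ of a single Airy$_2$ process, and it is this joint convergence that drives the whole result.

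First I would set $x=\lfloor s t^{2/3}\rfloor$, so that the three arguments of $\mathcal{N}$ in \eqref{eq:SchockExact2} become $\lfloor(s-2m)t^{2/3}\rfloor$, $\lfloor s t^{2/3}\rfloor+1$, $\lfloor(s+2m)t^{2/3}\rfloor+1$ up to bounded shifts that do not affect the $t^{1/3}$-scale limit. Applying Proposition~\ref{prop:TasepAiry} jointly at these three arguments gives
\begin{equation*}
\mathcal{N}(\lfloor r t^{2/3}\rfloor,t)=\tfrac{t}{4}-\tfrac{r}{2}t^{2/3}+\bigl(\tfrac{r^{2}}{4}-\mathcal{A}(r)\bigr)t^{1/3}+o(t^{1/3}),
\end{equation*}
for $r\in\{s-2m,s,s+2m\}$. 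A short calculation then shows that in the two differences appearing in \eqref{eq:SchockExact2} the $t/4$ terms cancel pairwise, the $t^{2/3}$ contributions give $mt^{2/3}+O(t^{1/3})$ and $0\cdot t^{2/3}+O(t^{1/3})$ respectively, and these are exactly absorbed by the explicit subtraction of $M=\lfloor m t^{2/3}\rfloor$ and by the threshold $N+1$. The surviving $t^{1/3}$ contributions are $m(m-s)-\mathcal{A}(s-2m)+\mathcal{A}(s)$ in the first difference and $-m(m+s)-\mathcal{A}(s)+\mathcal{A}(s+2m)$ in the second, so dividing \eqref{eq:SchockExact2} by $t^{1/3}$ and passing to the limit produces exactly the inequality stated for $\mathfrak{f}^{(2)}(t)$. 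The joint statement for $(\mathfrak{N}^{(2)}_1,\mathfrak{N}^{(2)}_2,\mathfrak{N}^{(2)}_3)$ follows from the same substitution applied to \eqref{eq:SchockExact22}; the two nested $\min\{\cdot,0\}$ structures in the limit come from the thresholds $M$ and $N+1$ in that identity, after the same cancellations take place.

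There is no serious analytic obstacle beyond careful bookkeeping. The only subtle point is the passage from the joint weak convergence of the three $\mathcal{N}$'s to convergence in distribution of the event in \eqref{eq:SchockExact2}, since this event involves a $\max$ composed with differences and hence is not a continuous functional on a negligible set. However, because the Airy$_2$ marginals are absolutely continuous, the random variable $-m(m+s)-\mathcal{A}(s)+\mathcal{A}(s+2m)$ has no atom at $0$, so the boundary of the relevant event has probability zero and the continuous mapping theorem applies directly. The treatment of the colored height function statement is entirely analogous.
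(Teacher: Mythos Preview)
Your proposal is correct and follows essentially the same approach as the paper: substitute the joint Airy asymptotics from Proposition~\ref{prop:TasepAiry} at the three arguments $s-2m,\,s,\,s+2m$ into the exact identities of Proposition~\ref{prop:shock2}, and track the cancellations at orders $t$ and $t^{2/3}$. Your additional remark about applying the continuous mapping theorem (using that the Airy$_2$ finite-dimensional distributions are absolutely continuous, so the boundary event has probability zero) is a useful point of rigor that the paper leaves implicit.
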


\section{Backwards geodesics and localization around the characteristic line}\label{sec:Geodesics}
Let us consider TASEP with jump rate $1$ to the right. Let $\eta_j(t)$ be the occupation variable of site $j\in\Z$ at time $t\in\R_+$. Define by $J(t)$ the number of particles which jumped from site $0$ to site $1$ during the time interval $[0,t]$. Then define the height function
\begin{equation}\label{eq3.1}
h(x,t)=\left\{
\begin{array}{ll}
2 J(t)+\sum_{y=1}^{x} (1-2\eta_y(t)),&\textrm{for }x\geq 1,\\
2 J(t),&\textrm{for }x=0,\\
2 J(t)-\sum_{y=x+1}^0(1-2\eta_y(t)),&\textrm{for }x\leq -1.
\end{array}
\right.
\end{equation}
This height function is related to another quantity in the case that initially to the right of the origin there is a finite number of particle. Let ${\cal N}(x,t)$ be the number of particles weakly to the right of $x$ at time $t$. Then
\begin{equation}
h(x,t) = 2 \mathcal N (x,t)+x.
\end{equation}

Consider here the graphical construction of TASEP~\cite{Har78}. At each site there is a Poisson process with intensity $1$, all independent of each other. If there is an event of the Poisson process at position $x$ and time $t$, then if there is a particle at site $x$, it tries to jump to site $x+1$ and this jump occurs only if site $x+1$ is empty. In particular, thus both sites $x$ and $x+1$ are occupied, then nothing happens. In terms of height function we have the following dynamics:
\begin{equation}
  h(x,t)\to h(x,t)+2 \textrm{ iff }x\textrm{ it is a local minimum at time }t.
\end{equation}

\subsection{Concatenation property of the height function}
In~\cite{Fer18} TASEP was described in terms of labelled particle positions and a concatenation property was derived from a backwards space-time path. The same equation was derived before in Lemma~2.1 of~\cite{Sep98c}. Since the quantity we want to analyze is the height function and not the position of a given particle, let us first derive a concatenation property for the height function directly. The construction has analogies and differences with respect to the construction of~\cite{Fer18}. In particular, the backwards path is not unique.

For any time $\tau \in [0,t]$, define $h^{\rm step}_{y,\tau}(x,t)$ to be the height function of the step initial condition at $(x,t)$ with initial condition at time $\tau$ given by $h^{\rm step}_{y,\tau}(x,\tau)=|x-y|$. Then, for all $y\in\Z$, we have
\begin{equation}
h(y,\tau)\leq \hat h(y,\tau):=h(x,\tau)+h^{\rm step}_{y,\tau}(x,\tau).
\end{equation}
At time $\tau$ we have two height profiles, $\hat h(y,\tau)$ and $h(y,\tau)$, which agree at site $x$. We couple the evolution of these two processes from time $\tau$ to time $t$ by the graphical construction (the basic coupling for TASEP). Then, by monotonicity we have $h(y,t)\leq \hat h(y,t)$ for all $y\in\Z$, which gives
\begin{equation}\label{eq3.4a}
h(x,t)\leq \min_{y\in\Z}\{h(y,\tau)+h^{\rm step}_{y,\tau}(x,t)\}.
\end{equation}
Below we want to describe a backwards path $s\mapsto x(s)$ such that if $x=x(t)$, then the inequality in \eqref{eq3.4a} becomes an equality for $y=x(\tau)$.

Let us think what can happen when a Poisson process event occur, say at $(y,s)$. Let $s_-$ denote an infinitesimal time before $s$, so that $h(y,s_-)=\lim_{\e\downarrow 0} h(y,s-\e)$. We have the following possibilities:
\begin{itemize}
\item[(a)] nothing happens if $h(y,s_-)$ is a local maximum,
\item[(b)] $h(y,s)=h(y,s_-)+2$ if $h(y,s_-)$ is a local minimum,
\item[(c)] if $h(y,s_-)$ is neither a local maximum or a local minimum, then the trial is suppressed.
\end{itemize}

\begin{defin}\label{DefBackwardsPath}
Let us define the following backwards process $\tau\mapsto (x(\tau),\tau)$, with $\tau$ running backwards from $t$ to $0$, as follows. In case (a) and (b) nothing happens, i.e., $x(s)=x(s_-)$. However, if we are in case (c), where a trial is suppressed, then
\begin{itemize}
\item[(c1)] if $h(x(s),s)=h(x(s)+1,s)+1$, then we set $x(s_-)=x(s)+1$, i.e., $h(x(s_-),s_-)=h(x(s)+1,s)$,
\item[(c2)] if $h(x(s),s)=h(x(s)-1,s)+1$, then we set $x(s_-)=x(s)-1$, i.e., $h(x(s_-),s_-)=h(x(s)-1,s)$.
\end{itemize}
\end{defin}

Given any time $\tau\in[0,t]$ and the position of the process $x(\tau)$, let us define a the height function
\begin{equation}
x\mapsto \tilde h(x,s)\textrm{ for }s\in[\tau,t]
\end{equation}
as follows. At time $s=\tau$, we set
\begin{equation}\label{eq3.3}
\tilde h(x,t)= h(x(\tau),\tau)+ h^{\rm step}_{x(\tau),\tau}(x,t).
\end{equation}
Thus at time $\tau$ we have two height profiles, $\tilde h(x,\tau)$ and $h(x,\tau)$, which agree at site $x=x(\tau)$, and for any $y\in\Z$, $\tilde h(y,\tau)\geq h(y,\tau)$. We let evolve the two processes by the basic coupling. As before, by monotonicity we get $\tilde h(x,t)\geq h(x,t)$ for all $x\in\Z$.

The following proposition tells us that for TASEP the inequality \eqref{eq3.4a} is actually an equality if we choose $y=x(\tau)$. Since the height function at time $t$ can be given in terms of some height function at any intermediate time $\tau$ plus the increment of a step-initial condition starting from time $\tau$, we call this \emph{concatenation property} in analogy to the name used in the LPP framework.
\begin{prop}\label{PropConcatenation}
For any $\tau\in [0,t]$, it holds
\begin{equation}\label{eqConcatenation}
h(x,t)= \min_{y\in\Z}\{h(y,\tau)+h^{\rm step}_{y,\tau}(x,t)\},
\end{equation}
where the equality is obtained \emph{at least} for $y=x(\tau)$, the position of the backwards path starting from position $x$ at time $t$.
\end{prop}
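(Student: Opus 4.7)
The inequality $\leq$ in \eqref{eqConcatenation} is exactly \eqref{eq3.4a}, obtained by applying attractiveness of TASEP to the dominating V--shape $z\mapsto h(y,\tau)+|z-y|$. For the reverse inequality at $y=x(\tau)$, I would study the coupled process $\tilde h(\cdot,s)$ introduced before the proposition: at time $\tau$ set $\tilde h(\cdot,\tau)=h(x(\tau),\tau)+|{\cdot}-x(\tau)|$ and evolve under the same Poisson graphical construction as $h$, so that $\tilde h(x,t)=h(x(\tau),\tau)+h^{\rm step}_{x(\tau),\tau}(x,t)$. The proposition is then equivalent to $\tilde h(x,t)=h(x,t)$, which combined with the already known inequality $\tilde h\ge h$ would pin down equality at the single point $(x,t)$.

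The plan is to propagate the identity along the backwards path by proving, via forward induction on Poisson events in $[\tau,t]$, the stronger statement
\[
\tilde h(X(s),s)=h(X(s),s)\quad\text{for all } s\in[\tau,t], \qquad X(s):=x(s).
\]
The identity is trivial at $s=\tau$ (where $X(\tau)=x(\tau)$), implies the proposition at $s=t$, and is automatic between events. Thus it suffices to check preservation across a Poisson event at $(y,s^*)$: events with $y$ not at the path's current location leave $h$ and $\tilde h$ unchanged at $X$; for events at $y=X$ I would split into the cases of Definition~\ref{DefBackwardsPath}. In case (b), the identity $\tilde h(y)=h(y)$ together with $\tilde h\ge h$ and the $\pm1$-increment structure of both height functions rigidly forces $\tilde h(y\pm1,s^{*-})=\tilde h(y,s^{*-})+1$, so $y$ is also a local minimum of $\tilde h$ and both processes receive the same $+2$ update. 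In cases (c1)/(c2) the trial is suppressed for $h$, and the path hops to the neighbour on the side where the $h$-profile decreases; an analogous rigidity argument applied at that neighbour shows that $\tilde h$ and $h$ coincide there, so the identity migrates with the path.

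The main obstacle is the subtle case (a), where $h$ has a local maximum at $X$: there $h$ is unchanged by the event, but $\tilde h(X,s^{*-})$ may be a local minimum of $\tilde h$, in which case the Poisson trial would bump $\tilde h(X)$ by $+2$ and violate the identity. Resolving this requires allowing the path to hop to either neighbour at a local maximum of $h$---a choice that is consistent with the non-uniqueness of the backwards path emphasized in the text, since both conditions (c1) and (c2) of Definition~\ref{DefBackwardsPath} are satisfied simultaneously in this case. Once the hop is made, the rigidity argument confirms again that $\tilde h$ equals $h$ at the chosen neighbour. Combining these cases closes the induction and delivers the equality $\tilde h(x,t)=h(x,t)$, hence the concatenation property with the minimum attained at $y=x(\tau)$.
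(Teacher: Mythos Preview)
Your strategy coincides with the paper's: couple the step profile $\tilde h$ centered at $x(\tau)$ with $h$ under the same graphical construction and propagate the identity $\tilde h(x(s),s)=h(x(s),s)$ forward along the backwards path, using that $\tilde h\ge h$ together with the $\pm1$--Lipschitz structure pins down $\tilde h$ at neighbours where $h$ is higher. The paper's between-jumps argument simply asserts that equality can only break when a trial at the path site is ``suppressed for $h$'' (i.e.\ case~(c)), which forces a jump of the backwards path and yields a contradiction; case~(a) is not discussed at all.

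You go beyond the paper in isolating case~(a), and you are right to worry: nothing rules out $h$ having a local maximum at the path site while $\tilde h$ has a local minimum there (take $h(\pm1,0)=0$, $h(0,0)=1$, a single Poisson event at $(0,u)$, and $x(t)=0$). The event then raises $\tilde h(0)$ by $2$ but leaves $h(0)$ untouched, yet Definition~\ref{DefBackwardsPath} keeps the path at $0$; one checks directly that the minimizer in~\eqref{eqConcatenation} is then $y=\pm1$, not $y=x(\tau)=0$. So the paper's argument genuinely glosses over this case, and your instinct to let the path hop at a local maximum is the correct repair. Be aware, though, that this is an \emph{amendment} of Definition~\ref{DefBackwardsPath} (which says ``nothing happens'' in case~(a)), not a consequence of the non-uniqueness alluded to in the text: the conditions in (c1)/(c2) are only activated ``if we are in case~(c)''.

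One bookkeeping slip: in your forward induction the rigidity step should be applied at the \emph{arrival} site $X(s^*)$, not at ``that neighbour'' $X(s^{*-})$ you hopped to backwards (there you already have $\tilde h=h$ by the induction hypothesis). In both case~(c) and your amended case~(a) the forward arrival $X(s^*)$ sits on the higher-$h$ side of $X(s^{*-})$; from $\tilde h(X(s^{*-}))=h(X(s^{*-}))$ and $\tilde h\ge h$ the $\pm1$ increments force $\tilde h(X(s^*),s^{*-})=h(X(s^*),s^{*-})$, and since one neighbour of $X(s^*)$ is strictly lower for $\tilde h$, the event cannot grow $\tilde h$ there. This is exactly the paper's computation around~\eqref{eq3.6}, with $x(\tilde t)$ playing the role of the arrival site.
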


\begin{proof}
We wish to prove that for $x=x_0=x(t)$, $\tilde h(x,t)=h(x,t)$, i.e., the inequality in \eqref{eq3.4a} is an equality at $x=x_0$, the starting site of our backwards path.

At time $\tau$ we have $h(x(\tau),\tau)=\tilde h(x(\tau),\tau)$. Let $\tilde t$ be the first time after $\tau$ where $x(\tilde t)\neq x(\tau)$. Then we claim that
\begin{equation}\label{eq3.5}
h(x(s),s)=\tilde h(x(s),s),\textrm{ for all }s\in[\tau,\tilde t).
\end{equation}
By assumption $x(s)=x(\tau)$ for all $s\in[\tau,\tilde t)$. Since $h$ and $\tilde h$ are coupled, the only way that \eqref{eq3.5} stops being satisfied is that there is some time $u\in [\tau,\tilde t)$ such that $h(x(\tau),u)<\tilde h(x(\tau),u)$, i.e., at time $u$ a trial at position $x(\tau)$ is suppressed for $h$, but not for $\tilde h$. This implies that at time $u$, $x(u)\neq x(\tau)$, which is a contradiction.

Let us see what happens at time $\tilde t$. There are two symmetric cases, depending on whether the trial is suppressed at a decreasing or increasing part of the height function. The arguments are completely symmetric, so we give the details only in one case. Consider the case that $x(\tilde t_-)=x(\tilde t)+1$, see Figure~\ref{FigLocalConfig}. Then we have
\begin{figure}
\begin{center}
\psfrag{x}[c]{$x(\tilde t)$}
\psfrag{y}[c]{$x(\tilde t_-)$}
\includegraphics[height=3cm]{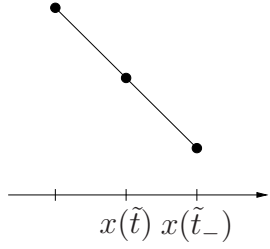}
\caption{Local configuration of the height function $h$ in the case $x(\tilde t)=x(\tilde t_-)-1$.}
\label{FigLocalConfig}
\end{center}
\end{figure}
\begin{equation}\label{eq3.6}
\begin{aligned}
h(x(\tilde t),\tilde t) &= h(x(\tilde t_-),\tilde t)+1,\\
h(x(\tilde t_-),\tilde t_-)&=\tilde h(x(\tilde t_-),\tilde t_-),\\
h(x(\tilde t),\tilde t)&\leq \tilde h(x(\tilde t),\tilde t),
\end{aligned}
\end{equation}
where the first equality reflects the fact that there was a suppressed jump at time $\tilde t$, the second follows by \eqref{eq3.5} since $\tilde t_-<\tilde t$ and the third holds generically, see \eqref{eq3.3}.

As a consequence, just before the suppressed trial for $h$, the height function $\tilde h$ and $h$ are equal on positions $x(\tilde t)$ and $x(\tilde t)+1$. Thus the suppressed trial for $h$ is also a suppressed trial for $\tilde h$ and thus we have
\begin{equation}
h(x(\tilde t),\tilde t)=\tilde h(x(\tilde t),\tilde t)
\end{equation}
as well.

Repeating this argument at each time when $x(u)$ has a jump one gets
\begin{equation}
h(x(u),u)=\tilde h(x(u),u)
\end{equation}
for all $u\in [\tau,t]$. In particular, $h(x(t),t)=\tilde h(x(t),t)$, which is the claimed result.
\end{proof}

As a consequence of \eqref{eqConcatenation}, there exists \emph{at least} one path going from time $t$ to time $0$ such that for any time $\tau$ \eqref{eqConcatenation} holds. However this path is in general not unique.
\begin{defin}
A path $\{(x(\tau),\tau),\tau:t\to 0\}$ with $x(t)=x$ and satisfying \eqref{eqConcatenation} for all $\tau\in[0,t]$ is called backwards geodesic from time $t$ and position $x$ to time $0$.
\end{defin}

\begin{remark}\label{remGeodesics}
For TASEP with step initial condition, i.e., $h(x,0)=|x|$, there exists a backwards geodesic ending at $x(0)=0$. Indeed, consider the backwards path constructed above. If it ends at $x(0)=0$, then we are done. In case that $x(0)=x_0\neq 0$, then let $\e>0$ such that no Poisson point events between $x_0$ and $0$ occur in the time interval $[0,\e]$. Then, we have $x(\e)=x_0$. Define a new path $\tilde x$ obtained by modifying the path $x$ only during $s\in [0,\e]$ by setting $\tilde x(s)=\lfloor x_0 s/\e\rfloor$. Then $\tilde x$ is also a geodesic and it satisfies $\tilde x(0)=0$.
\end{remark}

What information do we get from the location of a geodesic? Consider a geodesic ending at $(x,t)$. At any time, we can reset the configuration to a step initial condition at the geodesic and the height function at time $(x,t)$ will not be effected. In particular, by monotonicity, we can reset the configuration to anything between the actual one and step initial condition. Thus if we know that with high probability the geodesic is in a deterministic region $\cal D$, then with high probability $h(x,t)$ is depending on the randomness in $\cal D$ only.

Let us formalize it slightly more. Consider a deterministic space-time region $\cal D$ and let the event $\Omega_{\rm loc}=\{(x(\tau),\tau)_{0\leq\tau\leq t}\in {\cal D}\}$, where $\tau\mapsto x(\tau)$ is a geodesic ending at $(x,t)$. On $\Omega_{\rm loc}$, we replace at any time the system to have slope $-1$ to the left of $\cal D$ and slope $+1$ to the right of $\cal D$. Equivalently, we can think of replacing the Poisson processes outside $\cal D$ with Poisson processes with infinite rate, which leads to fully filled particles to the left and fully empty to the right of $\cal D$. Thus, we can write
\begin{equation}
h(x,t)=h(x,t)\Id_{\Omega_{\rm loc}}+h(x,t)\Id_{\Omega_{\rm loc}^c},
\end{equation}
where $h(x,t)\Id_{\Omega_{\rm loc}}$ is independent of the randomness outside $\cal D$. In particular,
\begin{equation}\label{eq3.14b}
\Pb(h(x,t)\neq h(x,t)\Id_{\Omega_{\rm loc}})\geq \Pb(\Omega_{\rm loc}).
\end{equation}

\subsection{Decorrelation of the small time randomness}
Here we prove a result which is very close to the slow-decorrelation result discussed in Section 3.1 of~\cite{CFP10b}. Consider step-initial condition. Then the evolution over a time $\tau_t=o(t)$ is irrelevant for the fluctuations of the height function at position $\alpha t$ at time $t$. The latter are asymptotically the same as the fluctuations generated by step-initial condition from time $\tau_t$ to time $t$.
\begin{prop}\label{PropSlowDec}
Let $h(x,0)=|x|$ and $\alpha\in (-1,1)$. Consider $\{\tau_t\}$ a sequence of times such that $\tau_t/t\to 0$.
We have the following:
\begin{equation}
\forall \e>0, \, \lim_{t\to \infty} \Pb(|h(\alpha t,t)-h^{\rm step}_{\alpha \tau_t,\tau_t}(\alpha t,t)-\tfrac12 (1+\alpha^2)\tau_t|\geq \e t^{1/3})=0.
\end{equation}
\end{prop}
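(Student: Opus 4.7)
The plan is to combine the concatenation property of Proposition~\ref{PropConcatenation} with a one-sided sandwich argument in the spirit of~\cite{CFP10b}. Applying~\eqref{eqConcatenation} at the intermediate time $\tau_t$ with the deterministic choice $y=\alpha\tau_t$ yields the pointwise upper bound
\begin{equation}
h(\alpha t,t)\le Z_t := h(\alpha\tau_t,\tau_t)+h^{\rm step}_{\alpha\tau_t,\tau_t}(\alpha t,t).
\end{equation}
The aim is to show that, after subtracting $\tfrac12(1+\alpha^2)t$, both $h(\alpha t,t)$ and $Z_t$ converge in distribution on scale $t^{1/3}$ to the same GUE Tracy--Widom law, and then to use a sandwich lemma to deduce that their difference is $o(t^{1/3})$ in probability. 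Combining this with the fact that the first summand of $Z_t$, once properly centered, is of size $o(t^{1/3})$, then gives the statement.

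For the distributional limits, the classical step-IC asymptotics for TASEP (also recovered in Corollary~\ref{cor:step1and13}) give
\begin{equation}
\frac{h(\alpha t,t)-\tfrac12(1+\alpha^2)t}{t^{1/3}}\stackrel{d}{\to} -c(\alpha)\,\xi,\qquad c(\alpha):=\frac{(1-\alpha^2)^{2/3}}{2^{1/3}},
\end{equation}
with $\xi$ GUE Tracy--Widom distributed. By translation and time-shift invariance of the graphical construction, $h^{\rm step}_{\alpha\tau_t,\tau_t}(\alpha t,t)$ depends only on the Poisson events in $[\tau_t,t]$, is independent of $h(\alpha\tau_t,\tau_t)$, and is equidistributed with the step-IC height at position $\alpha(t-\tau_t)$ and time $t-\tau_t$. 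Since $\tau_t=o(t)$ gives $(t-\tau_t)^{1/3}=t^{1/3}(1+o(1))$, the same asymptotics yield
\begin{equation}
\frac{h^{\rm step}_{\alpha\tau_t,\tau_t}(\alpha t,t)-\tfrac12(1+\alpha^2)(t-\tau_t)}{t^{1/3}}\stackrel{d}{\to} -c(\alpha)\,\xi.
\end{equation}
Moreover $h(\alpha\tau_t,\tau_t)-\tfrac12(1+\alpha^2)\tau_t$ is tight on the scale $\tau_t^{1/3}=o(t^{1/3})$. Summing these contributions, $(Z_t-\tfrac12(1+\alpha^2)t)/t^{1/3}$ converges in distribution to the same $-c(\alpha)\xi$ as $(h(\alpha t,t)-\tfrac12(1+\alpha^2)t)/t^{1/3}$.

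Finally, the sandwich lemma: if $X_t\le Y_t$ a.s.\ and $X_t/t^{1/3},Y_t/t^{1/3}$ both converge in distribution to the same continuous random variable, then $(Y_t-X_t)/t^{1/3}\to 0$ in probability. Indeed, by tightness one can extract a joint subsequential limit $(X^\star,Y^\star)$ of $(X_t,Y_t)/t^{1/3}$; then $X^\star\le Y^\star$ a.s.\ while $X^\star\stackrel{d}{=}Y^\star$, which forces $X^\star=Y^\star$ a.s. Applying this with $X_t:=h(\alpha t,t)-\tfrac12(1+\alpha^2)t$ and $Y_t:=Z_t-\tfrac12(1+\alpha^2)t$ gives $(Z_t-h(\alpha t,t))/t^{1/3}\to 0$ in probability, and combining with $h(\alpha\tau_t,\tau_t)-\tfrac12(1+\alpha^2)\tau_t=o(t^{1/3})$ in probability produces the claim. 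No serious obstacle is expected: the only place where the hypothesis $\tau_t=o(t)$ enters is to align the scales $(t-\tau_t)^{1/3}\sim t^{1/3}$ and to kill the small-time fluctuation $\tau_t^{1/3}$ on the $t^{1/3}$ scale; the concatenation bound comes without error, and the Tracy--Widom inputs are classical.
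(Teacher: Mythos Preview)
Your proposal is correct and follows essentially the same route as the paper's proof: both use the concatenation inequality \eqref{eqConcatenation} at time $\tau_t$ with $y=\alpha\tau_t$, observe that the three pieces $h(\alpha t,t)$, $h^{\rm step}_{\alpha\tau_t,\tau_t}(\alpha t,t)$, and $h(\alpha\tau_t,\tau_t)$ each have GUE Tracy--Widom fluctuations on their natural scales, and then invoke the sandwich principle (Lemma~\ref{LemCorwin} in the paper, which you reprove directly via a subsequential tightness argument). One minor remark: your pointer to Corollary~\ref{cor:step1and13} for the one-point asymptotics is logically downstream of this proposition (Theorem~\ref{thm:DecorrelationHeights} uses Proposition~\ref{PropSlowDec}), so you should instead cite the classical one-point result directly, e.g.\ Theorem~\ref{Thm:CvgAiry} or~\cite{Jo00b}, to avoid circularity.
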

\begin{proof}
We use the convergence in distribution of the following rescaled random variables
\begin{equation}
\begin{aligned}
h^{(1)}_t&:=\frac{h(\alpha t,t)-\tfrac12(1+\alpha^2)t}{-2^{-1/3}(1-\alpha^2)^{2/3}t^{1/3}}\stackrel{t\to\infty}{\Longrightarrow}F_{\rm GUE},\\
h^{(2)}_t&:=\frac{h^{\rm step}_{\alpha \tau_t,\tau_t}(\alpha t,t)-\tfrac12(1+\alpha^2)(t-\tau_t)}{-2^{-1/3}(1-\alpha^2)^{2/3}t^{1/3}}\stackrel{t\to\infty}{\Longrightarrow}F_{\rm GUE},\\
h^{(3)}_t&:=\frac{h(\alpha \tau_t,\tau_t)-\tfrac12(1+\alpha^2)\tau_t}{-2^{-1/3}(1-\alpha^2)^{2/3}\tau_t^{1/3}}\stackrel{t\to\infty}{\Longrightarrow}F_{\rm GUE}.
\end{aligned}
\end{equation}
The concatenation property \eqref{eqConcatenation} gives
\begin{equation}
h^{(1)}_t\leq h^{(2)}_t+(\tau_t/t)^{1/3} h^{(3)}_t,
\end{equation}
which implies that
\begin{equation}
h^{(1)}_t= h^{(2)}_t+(\tau_t/t)^{1/3} h^{(3)}_t-X_t,
\end{equation}
where $X_t\geq 0$ is a random variable. Since $\tau_t/t\to 0$ as $t\to\infty$, we have that $(\tau_t/t)^{1/3} h^{(3)}_t$ converges to $0$, which implies that therefore $h^{(1)}_t$ and $h^{(2)}_t+(\tau_t/t)^{1/3} h^{(3)}_t$ converge to the same distribution. Then Lemma~\ref{LemCorwin} implies that $X_t$ converges to $0$ in probability. The second statement of Lemma~\ref{LemCorwin} then gives the claimed statement.
\end{proof}

\subsection{The comparison lemma}
We are interested in controlling the increment of the height function. In particular, we want to apply the result for distances which are $o(t^{2/3})$, i.e., less than the natural KPZ correlation scale. As the limiting process, the Airy$_2$ process in our case, is locally Brownian, it is natural to try to estimate the variation of the increments with respect to a stationary situation. This can be obtained by adapting the idea previously developed in the last passage percolation framework~\cite{Pim17,CP15b}.

Consider the evolution of two coupled height functions, starting with initial profiles $h_1(x,0)$ and $h_2(x,0)$. Consider two positions $x<y$. For each of the height functions and each of the two end positions, construct the backwards geodesics. We call them
\begin{equation}
\pi_{k,x}=\{(x_k(s),s),s:t\to 0\},\quad \pi_{k,y}=\{(y_k(s),s),s: t\to 0\},\quad k=1,2,
\end{equation}
where with the notation $s:t\to 0$ we want to stress that the time goes backwards, from $t$ to $0$.
We have the following comparison lemma.
\begin{figure}
\begin{center}
\psfrag{t}[c]{$t$}
\psfrag{x}[c]{$x$}
\psfrag{y}[c]{$y$}
\psfrag{1x}[l]{$\pi_{1,x}$}
\psfrag{1y}[l]{$\pi_{1,y}$}
\psfrag{2x}[r]{$\pi_{2,x}$}
\psfrag{2y}[r]{$\pi_{2,y}$}
\includegraphics[height=4cm]{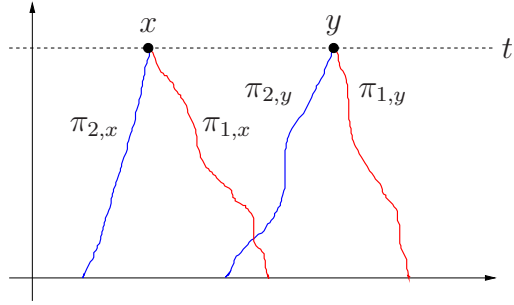}
\caption{Backwards geodesics with intersection of $\pi_{1,x}$ and $\pi_{2,y}$.}
\label{FigComparison}
\end{center}
\end{figure}

\begin{lem}\label{lemComparison}
Let $x<y$. On the event $\{\pi_{1,x}\cap\pi_{2,y}\neq \emptyset\}$, we have
\begin{equation}\label{eq3.12}
h_2(y,t)-h_2(x,t) \geq h_1(y,t)-h_1(x,t).
\end{equation}
Similarly, on the event $\{\pi_{1,y}\cap\pi_{2,x}\neq \emptyset\}$, we have
\begin{equation}\label{eq3.13}
h_2(y,t)-h_2(x,t) \leq h_1(y,t)-h_1(x,t),
\end{equation}
\end{lem}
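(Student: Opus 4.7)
The plan is to exploit the concatenation property (Proposition~\ref{PropConcatenation}) together with the defining feature of a backwards geodesic: the geodesic attains equality in \eqref{eqConcatenation}, whereas an arbitrary point $(y,\tau)$ only gives an upper bound for the height function. Thus, whenever two geodesics meet at a common space-time point $(z,\tau)$, we can feed $z$ into the concatenation formula for both processes at both endpoints and get two equalities and two inequalities that combine cleanly.

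In more detail, I would proceed as follows. Suppose $(z,\tau)\in \pi_{1,x}\cap \pi_{2,y}$ for some $\tau\in[0,t]$. Since $\pi_{1,x}$ is a backwards geodesic for $h_1$ ending at $(x,t)$ and passing through $(z,\tau)$, Proposition~\ref{PropConcatenation} gives
\begin{equation}
h_1(x,t)=h_1(z,\tau)+h^{\rm step}_{z,\tau}(x,t),
\end{equation}
while for the other endpoint $y$ we only know the inequality
\begin{equation}
h_1(y,t)\le h_1(z,\tau)+h^{\rm step}_{z,\tau}(y,t).
\end{equation}
Subtracting yields
\begin{equation}
h_1(y,t)-h_1(x,t)\le h^{\rm step}_{z,\tau}(y,t)-h^{\rm step}_{z,\tau}(x,t).
\end{equation}

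By the same reasoning applied to $h_2$, but now using that $\pi_{2,y}$ (the geodesic ending at $(y,t)$) passes through $(z,\tau)$, I would obtain
\begin{equation}
h_2(y,t)=h_2(z,\tau)+h^{\rm step}_{z,\tau}(y,t),\qquad h_2(x,t)\le h_2(z,\tau)+h^{\rm step}_{z,\tau}(x,t),
\end{equation}
hence
\begin{equation}
h_2(y,t)-h_2(x,t)\ge h^{\rm step}_{z,\tau}(y,t)-h^{\rm step}_{z,\tau}(x,t).
\end{equation}
Chaining the two bounds through the common step-initial-condition increment proves \eqref{eq3.12}. The second inequality \eqref{eq3.13} follows by the symmetric argument, simply exchanging the roles of $\pi_{1,x}\leftrightarrow\pi_{1,y}$ and $\pi_{2,y}\leftrightarrow \pi_{2,x}$, so that the geodesic equalities are now at $(y,t)$ for $h_1$ and at $(x,t)$ for $h_2$, flipping each inequality.

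There is essentially no obstacle here once Proposition~\ref{PropConcatenation} is in place: all the content is in the fact that the step-initial-condition increment $h^{\rm step}_{z,\tau}(y,t)-h^{\rm step}_{z,\tau}(x,t)$ is a common quantity that sandwiches both differences, so the crossing of the geodesics forces the comparison. The only mild care needed is to note that the argument uses the \emph{same} graphical construction (basic coupling) for $h_1$, $h_2$ and the auxiliary step process started at $(z,\tau)$, which is exactly the framework in which Proposition~\ref{PropConcatenation} was stated, so the inequalities for $h_1(y,t)$ and $h_2(x,t)$ above are instances of the monotonicity used in its proof.
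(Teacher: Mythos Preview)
Your proof is correct and is essentially identical to the paper's own argument: both pick a common point $(z,\tau)$ on the two geodesics, use the equality from Proposition~\ref{PropConcatenation} along each geodesic and the inequality at the other endpoint, and sandwich the two height increments by the common step-initial-condition increment $h^{\rm step}_{z,\tau}(y,t)-h^{\rm step}_{z,\tau}(x,t)$.
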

\begin{proof}
Assume that $\pi_{1,x}\cap\pi_{2,y}=(x_\tau,\tau)\neq\emptyset$. Then,
\begin{equation}\label{eq3.14}
\begin{aligned}
h_2(y,t)&=h_2(x_\tau,\tau)+h^{\rm step}_{x_\tau,\tau}(y,t),\\
h_2(x,t)&\leq h_2(x_\tau,\tau)+h^{\rm step}_{x_\tau,\tau}(x,t),
\end{aligned}
\end{equation}
but also
\begin{equation}\label{eq3.15}
\begin{aligned}
h_1(x,t)&=h_1(x_\tau,\tau)+h^{\rm step}_{x_\tau,\tau}(x,t),\\
h_1(y,t)&\leq h_1(x_\tau,\tau)+h^{\rm step}_{x_\tau,\tau}(y,t).
\end{aligned}
\end{equation}
Combining \eqref{eq3.14} and \eqref{eq3.15} we get
\begin{equation}
  h_1(y,t)-h_1(x,t)\leq h^{\rm step}_{x_\tau,\tau}(y,t)-h^{\rm step}_{x_\tau,\tau}(x,t) \leq h_2(y,t)-h_2(x,t).
\end{equation}
Thus we have proved \eqref{eq3.12}. The proof of \eqref{eq3.13} is analogous.
\end{proof}

\subsection{Localization for the stationary case}
The stationary and translation-invariant measure of TASEP is Bernoulli product measure with parameter $\rho\in[0,1]$, the average density of particle~\cite{Li99}. In order to do comparison between the increments and the stationary increments of the height function, we will apply Lemma~\ref{lemComparison} where we take for $h_1$ and $h_2$ two different stationary initial conditions. The events where there is intersection of the backwards paths can be controlled once we have good estimates on the positions of the location of $\pi_{2,x}$ at time $0$.

\begin{prop}\label{PropLocalStatInitialPoint}
Let us consider stationary TASEP with density $\rho\in (0,1)$. Let $x(t)=(1-2\rho)t$ be the starting position of any backwards geodesic. Then, there exists constants $C,c>0$ independent of $t$ such that
\begin{equation}
\Pb(|x(0)|\geq M t^{2/3})\leq C e^{-c M^2}
\end{equation}
uniformly for all $t$ large enough.
\end{prop}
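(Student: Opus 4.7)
The plan is to use the concatenation property of Proposition~\ref{PropConcatenation} to turn $\{x(0)\ge Mt^{2/3}\}$ into a large-deviation inequality for TASEP height functions; the case $x(0)\le -Mt^{2/3}$ is analogous. Write $x_t:=(1-2\rho)t$. Proposition~\ref{PropConcatenation} with $\tau=0$ gives $h(x_t,t)=\min_{y\in\Z}\{h(y,0)+h^{\rm step}_{y,0}(x_t,t)\}$, with minimum attained at $y=x(0)$. Hence on $\{x(0)\ge Mt^{2/3}\}$, comparing with the value at $y=0$ yields the deterministic inequality
\begin{equation*}
h(x(0),0)-h(0,0)\ \le\ h^{\rm step}_{0,0}(x_t,t)-h^{\rm step}_{x(0),0}(x_t,t).
\end{equation*}

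Next I would expand both sides around their means. The Bernoulli$(\rho)$ initial profile gives $\E[h(y,0)-h(0,0)]=(1-2\rho)y$ with sub-Gaussian fluctuations of scale $\sqrt{|y|}$ by Hoeffding. The hydrodynamic limit $h^{\rm step}_{z,0}(x_t,t)\approx t\,\phi((x_t-z)/t)$, $\phi(\xi)=(1+\xi^2)/2$, Taylor-expanded around the characteristic direction $\xi=1-2\rho$, gives
\begin{equation*}
\E\bigl[h^{\rm step}_{0,0}(x_t,t)-h^{\rm step}_{y,0}(x_t,t)\bigr]=(1-2\rho)\,y-\frac{y^2}{2t}+o(y^2/t).
\end{equation*}
The linear parts $(1-2\rho)y$ cancel exactly---this is just the statement that $x_t$ lies on the characteristic through the origin---so the random inequality must overcome a deterministic ``curvature deficit'' of order $y^2/(2t)\ge M^2 t^{1/3}/2$. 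Decomposing dyadically into blocks $I_k=[2^kMt^{2/3},\,2^{k+1}Mt^{2/3})$, the deficit on $I_k$ is of order $(2^kM)^2 t^{1/3}$. Hoeffding on the stationary sum contributes $\exp(-c\,2^{3k}M^3)$, and the standard Tracy--Widom upper/lower one-point tail bounds for the step-IC TASEP height function (at the natural scale $t^{1/3}$) contribute at least as well. Summing the geometric series in $k$ produces a total bound comfortably stronger than $e^{-cM^2}$.

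The principal obstacle is the existence quantifier over $y\in I_k$: the estimates above sit at a single fixed $y$, whereas the event involves a random minimizer that could sit anywhere inside the block. I plan to resolve this in one of two ways. First, exploit the monotonicity of the right-most backwards geodesic in its endpoint (a well-defined object by Remark~\ref{remGeodesics}), so that ``$\mathrm{argmin}\ge M t^{2/3}$'' already implies a clean one-point inequality at the anchor $y_k=2^kMt^{2/3}$ of each block. If this falls short, I would instead adapt the exit-point/boundary-crossing arguments of~\cite{BSS14,CP15b,Pim17} developed in the stationary LPP setting, which transfer to TASEP through Proposition~\ref{PropConcatenation} and the comparison Lemma~\ref{lemComparison}. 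A secondary technical nuisance is that the lower tail of $h^{\rm step}$ is conventionally subtler than the upper tail, but since only a Gaussian $e^{-cM^2}$ rate is required---rather than the full cubic Tracy--Widom rate---crude second-moment estimates already suffice.
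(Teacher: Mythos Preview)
Your heuristic is correct---the curvature deficit $y^2/(2t)\ge \tfrac12 M^2 t^{1/3}$ is exactly what drives the $e^{-cM^2}$ rate---but the proof is not complete, and the route differs substantially from the paper's.

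\textbf{The gap.} The obstacle you flag is real and is not resolved by either of your suggestions. Your first fix invokes ``monotonicity of the right-most backwards geodesic in its endpoint'' and cites Remark~\ref{remGeodesics}; but that remark only says that for \emph{step} initial data one can arrange a geodesic to hit the origin---it says nothing about stationary IC, and nothing about monotonicity. Even granting that a right-most geodesic exists and is monotone in the terminal point, this does not turn ``the argmin lies in $I_k$'' into a one-point inequality at the anchor $y_k$: the minimizer $x(0)$ is random and could sit anywhere in $I_k$, so you still need to control $\min_{y\in I_k}\{h(y,0)+h^{\rm step}_{y,0}(x_t,t)\}$. The Bernoulli part is fine via Doob's maximal inequality, but the step-IC part $y\mapsto h^{\rm step}_{y,0}(x_t,t)$ is a nontrivial process on the KPZ correlation scale, and pointwise tail bounds do not control its infimum over blocks of width $\gtrsim t^{2/3}$. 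Your second fix---adapt stationary-LPP exit-point estimates via Lemma~\ref{lemComparison}---risks circularity: the way the comparison lemma is used later in the paper (e.g.\ in the proof of Proposition~\ref{PropMidTime}) requires knowing where the \emph{stationary} geodesic lands at time $0$, which is precisely Proposition~\ref{PropLocalStatInitialPoint}.

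\textbf{What the paper does instead.} The paper avoids the supremum problem altogether by a density-shift trick. It sets $\rho_\pm=\rho_0\pm\tilde\kappa t^{-1/3}$ and works at density $\rho_+$ with terminal point $(1-2\rho_0)t$. Under $\rho_+$ the event $\{x(0)<0\}$ is already atypical; inserting a threshold $A$ reduces the estimate to $\Pb_{\rho_+}(\min_{y\le 0}\{\cdots\}\le A)$ and $\Pb_{\rho_+}(\min_{y>0}\{\cdots\}>A)$, which are then identified with the \emph{restricted} stationary LPP times $L^{\rho_+}_\vert$ and $L^{\rho_+}_-$ and bounded directly by Lemma~3.3 and (3.8) of~\cite{FO17}. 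Translation invariance of the stationary measure converts the bound at $0$ for density $\rho_+$ into the bound at $-2\tilde\kappa t^{2/3}$ for density $\rho$, with $M=2\tilde\kappa$. So the paper never needs a dyadic decomposition or process-level supremum control: the ``min over $y$'' is absorbed into the LPP path-maximum, where the needed one-sided estimates are already available.

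In short: your outline could perhaps be pushed through, but it would require proving supremum bounds on $y\mapsto h^{\rm step}_{y,0}(x_t,t)$ that are essentially as hard as the proposition itself. The paper's density-shift plus LPP translation is the missing idea.
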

\begin{proof}
First we consider a related problem, namely let $x(t)=(1-2\rho_0)t=\tilde x$ and $\rho_\pm = \rho_0 \pm \tilde \kappa t^{-1/3}$. Since in the proof we are going to consider different densities for stationary TASEP, we use the notation $\Pb_\varrho$ to denote the probability law for stationary initial condition with density $\varrho$.

Consider the any backwards geodesic in the stationary case with density $\rho_+$ and look at the quantity $\Pb_{\rho_+}(x(0)>0)$. If $\min_{y>0}\{h(y,0)+h^{\rm step}_{y,0}(\tilde x,t)\}$ is strictly smaller than $\min_{y\leq 0}\{h(y,0)+h^{\rm step}_{y,0}(\tilde x,t)\}$, then it implies that any backwards geodesic is on $\{y>0\}$ at time $0$, and viceversa. In particular, this implies that $x(0)>0$. Therefore we have
\begin{equation}\label{eq4.26}
\Pb_{\rho_+}(x(0)>0) \geq \Pb_{\rho_+}\Big(\min_{y\leq 0}\{h(y,0)+h^{\rm step}_{y,0}(\tilde x,t)\} >\min_{y> 0}\{h(y,0)+h^{\rm step}_{y,0}(\tilde x,t)\} \Big).
\end{equation}
We would have equality if the geodesic would be unique, but due to the non-uniqueness, there is still the possibility that the minimum is attaint simultaneously on both sides of the origin. Then, for any choice of $A\in\Z$,
\begin{equation}
\begin{aligned}
&\eqref{eq4.26}\geq \Pb_{\rho_+}\Big(\min_{y\leq0}\{h(y,0)+h^{\rm step}_{y,0}(\tilde x,t)\} > A \geq \min_{y> 0}\{h(y,0)+h^{\rm step}_{y,0}(\tilde x,t)\} \Big)\\
&\geq 1-\Pb_{\rho_+}\Big(\min_{y\leq0}\{h(y,0)+h^{\rm step}_{y,0}(\tilde x,t)\} \leq A\Big) - \Pb_{\rho_+}\Big(\min_{y> 0}\{h(y,0)+h^{\rm step}_{y,0}(\tilde x,t)\}>A \Big).
\end{aligned}
\end{equation}
These probabilities for an appropriate choice of $A$ have been already bounded in~\cite{FO17} in terms of the last passage percolation model. Denote by $L^{\rho_+}$ the stationary LPP passage time, $L^{\rho_+}_\vert$ the LPP passage time restricted to paths whose first step is from $(0,0)$ to $(0,1)$, and similarly $L^{\rho_+}_{-}$ the LPP passage time restricted to paths whose first step is from $(0,0)$ to $(1,0)$ (see~\cite{FO17} for more details). Then, by the well-known link between height function and LPP we have
\begin{equation}
\begin{aligned}
\Pb_{\rho_+}\Big(\min_{y<0}\{h(y,0)+h^{\rm step}_{y,0}(\tilde x,t)\} \leq A\Big) &= \Pb\Big(L^{\rho_+}_\vert(\tfrac12 (A+\tilde x),\tfrac12 (A-\tilde x))> t\Big),\\
\Pb_{\rho_+}\Big(\min_{y\geq 0}\{h(y,0)+h^{\rm step}_{y,0}(\tilde x,t)\} > A\Big)& = \Pb\Big(L^{\rho_+}_{-}(\tfrac12 (A+\tilde x),\tfrac12 (A-\tilde x))\leq t\Big).
\end{aligned}
\end{equation}
The LPP goes from the origin to the point $(\gamma^2 n,n)$. Let us set $\chi=\rho_0(1-\rho_0)$. The quantities $\tilde x$ and $t$ being given, with the choice
\begin{equation}
A=(1-2\chi)t-\tilde \kappa^2 t^{1/3}
\end{equation}
we get
\begin{equation}
\begin{aligned}
n&=\rho_0^2 t-\tfrac12 \tilde \kappa^2 t^{1/3},\\
\gamma&=\frac{1-\rho_0}{\rho_0}+\frac{1-2\rho_0}{4\rho_0^2\chi}\tilde\kappa^2 t^{-2/3}+\Or(t^{-4/3}).
\end{aligned}
\end{equation}
Furthermore, setting $\rho_+=\frac{1}{1+\gamma}+\kappa n^{-1/3}$ as in~\cite{FO17}, we get
\begin{equation}
\kappa = \tilde \kappa \rho_0^{2/3}+\frac{1-2\rho_0}{4\chi^2}\tilde\kappa^3+\Or(t^{-1/3})
\end{equation}
and $x$ in Lemma~3.3 of~\cite{FO17} is indeed given by $t+\Or(1)$ (where the $\Or(1)$ plays no role in the asymptotics). Then (3.8) and Lemma~3.3 of~\cite{FO17} gives the following: for any $t$ large enough,
\begin{equation}
\begin{aligned}
\Pb_{\rho_+}\Big(\min_{y<0}\{h(y,0)+h^{\rm step}_{y,0}(\tilde x,t)\} \leq A\Big) &\leq C e^{-c\tilde\kappa^2},\\
\Pb_{\rho_+}\Big(\min_{y\geq 0}\{h(y,0)+h^{\rm step}_{y,0}(\tilde x,t)\} > A\Big)& \leq C e^{-c\tilde\kappa^3},
\end{aligned}
\end{equation}
for some constants $C,c$. These constants do not depend on $\tilde\kappa$ and can be taken uniformly for $\rho_0$ and $\rho_+$ in a bounded set away from $0$ and $1$.

We have thus proven that
\begin{equation}
\Pb_{\rho_+}(x(0)<0)\leq \tilde C e^{-c\tilde\kappa^2}
\end{equation}
for some constant $\tilde C$. This means that for density $\rho_+$, $x(0)-x(t)\geq (2\rho-1)t$ with high probability. But
\begin{equation}
(2\rho-1)t = (2\rho_+-1)t -2\tilde \kappa t^{2/3},
\end{equation}
thus by translation invariance of the stationary case, for density $\rho_+$, we have that
\begin{equation}
\Pb_{\rho_+}(x(0)<-2\tilde\kappa t^{2/3}| x(t)=(1-2\rho_+)t) \leq \tilde C e^{-c\tilde\kappa^2}.
\end{equation}
Taking $\rho_0=\rho-\tilde\kappa t^{-1/3}$, since the constants $\tilde C,c$ are uniform for densities in a bounded set away from $(0,1)$, we get $\rho_+=\rho$ and thus
\begin{equation}
\Pb_{\rho}(x(0)<-2\tilde\kappa t^{2/3}| x(t)=(1-2\rho)t) \leq \tilde C e^{-c\tilde\kappa^2}.
\end{equation}

Similarly, for density $\rho_-$, we get
\begin{equation}
\Pb_{\rho_-}(x(0)>2\tilde\kappa t^{2/3}| x(t)=(1-2\rho_-)t) \geq \tilde C e^{-c\tilde\kappa^2}
\end{equation}
and setting $\rho_0=\rho+\tilde\kappa t^{-1/3}$ we have $\rho_-=\rho$. Combining these last two bounds with $M=2\kappa$ we obtain
\begin{equation}
\Pb_\rho(|x(0)|\geq M t^{2/3})\leq C e^{-c M^2}
\end{equation}
for some new constants $C,c>0$.
\end{proof}

\subsection{Localization along characteristics for step initial condition.}
Let us start with a mid-time localization estimate of the geodesic.
\begin{prop}[Mid-time estimate]\label{PropMidTime}
Let $h(x,0)=|x|$ and let $\alpha\in (-1,1)$ be fixed. Let $x(\tau)$ be the any backwards geodesic starting from $x(t)=\alpha t$.
Then, for all $t$ large enough and $u>0$ we have
\begin{equation}
\Pb(|x(t/2)-\alpha t/2|\geq u t^{2/3})\leq C_1 e^{-c_1 u^2}
\end{equation}
for some constants $C_1,c_1>0$.
\end{prop}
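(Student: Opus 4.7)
The plan is to combine the concatenation property (Proposition~\ref{PropConcatenation}) with a curvature-versus-fluctuations argument, supplemented by a uniform tail estimate in the spirit of~\cite{BSS14}. By Proposition~\ref{PropConcatenation}, any backwards geodesic from $(\alpha t,t)$ satisfies $h(\alpha t, t) = \min_{y\in\Z} \Phi(y)$ with
\begin{equation*}
\Phi(y) := h(y, t/2) + h^{\rm step}_{y, t/2}(\alpha t, t),
\end{equation*}
and the minimum is attained at $y = x(t/2)$. Hence
\begin{equation*}
\{|x(t/2) - \alpha t/2| \geq u t^{2/3}\} \subseteq \Big\{\min_{|y - \alpha t/2| \geq u t^{2/3}} \Phi(y) \leq \Phi(\alpha t/2)\Big\}.
\end{equation*}

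A direct computation using the parabolic macroscopic shape $\tfrac12(1+v^2)s$ of the step initial condition yields
\begin{equation*}
\E[\Phi(y)] - \E[\Phi(\alpha t/2)] = \frac{2(y - \alpha t/2)^2}{t} + o(t^{1/3}),
\end{equation*}
so for $|y - \alpha t/2| \geq u t^{2/3}$ the deterministic gap is at least $2 u^2 t^{1/3}$, which dominates the $t^{1/3}$ fluctuation scale of each summand once $u$ is large.

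The next step is to show that no $y$ outside the window has random fluctuations of $\Phi(y)$ large enough to overcome this $O(u^2 t^{1/3})$ deterministic gap, with the required probability decay $C_1 e^{-c_1 u^2}$. I would use the known Tracy--Widom upper and lower tail bounds (of order $e^{-cK^{3/2}}$ and $e^{-cK^3}$ for deviations of size $K s^{1/3}$) for $h(y, t/2)$ and $h^{\rm step}_{y, t/2}(\alpha t, t)$ separately. To gain uniformity in $y$, I would employ a dyadic peeling of the region $\{|y - \alpha t/2| \geq u t^{2/3}\}$ into annuli $A_k = \{u 2^k t^{2/3} \leq |y - \alpha t/2| < u 2^{k+1} t^{2/3}\}$, on each of which the deterministic gap is at least $2 u^2 4^k t^{1/3}$. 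Within $A_k$, a net of $O(2^k t^{1/3})$ representative points combined with the spatial $1$-Lipschitz property of the height function converts one-point bounds into a supremum bound, and summation over $k$ yields the estimate.

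The main obstacle is exactly this uniformization step: balancing the union-bound factors against the Tracy--Widom tail decay so as to extract a clean $e^{-c_1 u^2}$ bound with constants uniform in $t$. An alternative and perhaps more elegant route is to couple the step initial condition with two auxiliary stationary TASEPs of densities $\rho\pm\kappa t^{-1/3}$, where $\rho=(1-\alpha)/2$ satisfies $1-2\rho=\alpha$, via basic coupling, and apply Lemma~\ref{lemComparison} together with Proposition~\ref{PropLocalStatInitialPoint} (used on the time interval $[t/2,t]$ by translation invariance in time of stationary TASEP) to sandwich $x(t/2)$ between two stationary geodesic positions; the Gaussian-type tail $Ce^{-cM^2}$ from Proposition~\ref{PropLocalStatInitialPoint} then directly produces the required $e^{-c_1 u^2}$ decay.
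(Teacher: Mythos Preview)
Your high-level strategy---invoking the concatenation property and exploiting the quadratic curvature gap $2(y-\alpha t/2)^2/t \geq 2u^2 t^{1/3}$ against $t^{1/3}$ fluctuations---is exactly how the paper begins. The paper inserts a threshold $S=\tfrac12(1+\alpha^2)t+\tfrac12 u^2 t^{1/3}$ and controls $\Pb(h(\alpha t,t)>S)$ and $\Pb\big(\min_{y\geq X_t(u)}\Phi(y)\leq S\big)$ separately, much as you suggest.

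The gap is in your uniformization step. A net of spacing $O(t^{1/3})$ in the annulus $A_k$ (forced by the $1$-Lipschitz property) has $O(u\,2^k\,t^{1/3})$ points, and the relevant one-point lower tail for each summand of $\Phi$ decays only exponentially, as $Ce^{-cK}$ for a downward deviation of $K t^{1/3}$ (this is \eqref{eqBoundUpper}, noting the sign in the definition of $h^{\rm resc}$). Your union bound therefore produces a prefactor $t^{1/3}$ that does not disappear: you obtain $C\,t^{1/3}e^{-c u^2}$ rather than $Ce^{-cu^2}$, and the statement requires constants uniform in $t$. This is not a cosmetic issue; it is the heart of the problem.

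The paper circumvents this not by sandwiching the geodesic position, but by controlling the \emph{local increments} of $y\mapsto h(y,t/2)$ over each sub-interval of length $ut^{2/3}$. Via the comparison lemma (Lemma~\ref{lemComparison}) applied between step and stationary initial conditions of suitably tilted density, these increments are bounded from below by stationary increments, which form a random walk; Doob's maximal inequality on that random walk then yields a supremum bound over the whole interval without any polynomial-in-$t$ prefactor (see the argument around \eqref{eq3.34}--\eqref{eq3.38}). This is the missing ingredient in your plan. Your alternative route---using Lemma~\ref{lemComparison} and Proposition~\ref{PropLocalStatInitialPoint} to directly sandwich $x(t/2)$ between stationary geodesic positions---is not what the paper does and is not straightforward: Lemma~\ref{lemComparison} compares height-function \emph{increments}, not argmin locations, so turning it into an ordering of geodesic midpoints would require an additional argument you have not supplied.
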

\begin{proof}
Let us start estimating $\Pb(x(t/2)-\alpha t/2\geq u t^{2/3})$, as the other bound is obtained similarly.

Let $X_t(u)=\alpha t/2+u t^{2/3}$. If $h(\alpha t,t)<\min_{y\geq X_t(u)}\{h(y,t/2)+h^{\rm step}_{y,t/2}(\alpha t,t)\}$ then necessarily any geodesic is at time $t/2$ to the left of $X_t(u)$. Therefore, for any $S$, we have
\begin{equation}\label{eq3.28}
\begin{aligned}
&\Pb(x(t/2)-\alpha t/2< u t^{2/3})\geq \Pb\Big(h(\alpha t,t)<\min_{y\geq X_t(u)}\{h(y,t/2)+h^{\rm step}_{y,t/2}(\alpha t,t)\}\Big)\\
&\geq \Pb\Big(h(\alpha t,t)\leq S <\min_{y\geq X_t(u)}\{h(y,t/2)+h^{\rm step}_{y,t/2}(\alpha t,t)\}\Big)\\
&\geq 1-\Pb(h(\alpha t,t)>S)-\Pb\Big(\min_{y\geq X_t(u)}\{h(y,t/2)+h^{\rm step}_{y,t/2}(\alpha t,t)\}\leq S\Big).
\end{aligned}
\end{equation}
From the law of large numbers we have
\begin{equation}
\begin{aligned}
h(\alpha t,t)&\simeq \tfrac12(1+\alpha^2)t,\\
h(X_t(v),t/2)&\simeq \tfrac14(1+\alpha^2)t+v\alpha t^{2/3}+ v^2t^{1/3},\\
h^{\rm step}_{X_t(v),t/2}(\alpha t,t)&\simeq \tfrac14(1+\alpha^2)t-v\alpha t^{2/3}+v^2t^{1/3}.
\end{aligned}
\end{equation}
Thus we choose $S=\tfrac12(1+\alpha^2)t+\tfrac12 u^2 t^{1/3}$. From the one-point tail bounds we have that
\begin{equation}
\Pb(h(\alpha t,t)>S)\leq C e^{-c u^2}.
\end{equation}
To estimate the last term in \eqref{eq3.28},  we want to divide the minimum into two minimums, but due to the linear term in the law of large numbers, we need to correct with this accordingly. So, define $f(y)=\alpha (y-\alpha t/2)$ so that $f(X_t(v))=\alpha v t^{2/3}$. Then
\begin{equation}\label{eq3.31}
\begin{aligned}
&\Pb\Big(\min_{y\geq X_t(u)}\{h(y,t/2)+h^{\rm step}_{y,t/2}(\alpha t,t)\}\leq S\Big) \\
&\quad\leq\Pb\Big(\min_{y\geq X_t(u)}\{h(y,t/2)-f(y)\}+\min_{y\geq X_t(u)} \{h^{\rm step}_{y,t/2}(\alpha t,t)+f(y)\}\leq S\Big) \\
&\quad\leq\Pb\Big(\min_{y\geq X_t(u)}\{h(y,t/2)-f(y)\}\leq S/2\Big)+\Pb\Big(\min_{y\geq X_t(u)}\{h^{\rm step}_{y,t/2}(\alpha t,t)+f(y)\}\leq S/2\Big),
\end{aligned}
\end{equation}
where the last inequality comes the fact that $a+b<S$ implies that $a<S/2$ or $b<S/2$.

The bound on the two terms of \eqref{eq3.31} are obtained in a similar way as they are both height functions from step initial conditions.

Choose a small $\delta>0$ and decompose
\begin{equation}\label{eq3.32}
\begin{aligned}
&\Pb\Big(\min_{y\geq X_t(u)}\{h(y,t/2)-f(y)\}\leq S/2\Big) \\
& \quad\leq \sum_{\ell=1}^{t^\delta} \Pb\Big(\min_{\ell u\leq z< (\ell+1)u}\{h(X_t(z),t/2)-f(X_t(z))\}\leq S/2\Big)\\
&\quad +\sum_{z\geq (t^\delta+1)u}\Pb\Big(h(X_t(z),t/2)-f(X_t(z))\leq S/2\Big),
\end{aligned}
\end{equation}
where the minimum and the last sum are of course only for $z$ such that $X_t(z)\in\Z$.

\medskip
\emph{Bound on the first term in \eqref{eq3.32}}. Remark that $h(X_t(\ell u),t/2)-f(X_t(\ell u))\simeq \tfrac14(1+\alpha^2)t+u^2\ell^2 t^{1/3}$. From the one-point bound, we have
\begin{equation}\label{eq3.33}
\Pb(h(X_t(\ell u),t/2)-f(X_t(\ell u))\geq S/2+\tfrac12 \ell^2 u^2t^{1/3})\geq 1- C e^{-c u^2 \ell^2}.
\end{equation}
Below we will prove the bound
\begin{equation}\label{eq3.34}
\begin{aligned}
&\Pb\Big(\min_{\ell u\leq z< (\ell+1)u}\{h(X_t(z),t/2)-h(X_t(\ell u),t/2)+\alpha (z-\ell u) t^{2/3}\}\geq -\tfrac12\ell^2u^2t^{1/3}\Big)\\
&\geq 1- \tilde C e^{- \tilde c \ell^2 u^2}
\end{aligned}
\end{equation}
for constants $\tilde C,\tilde c>0$.
Then, \eqref{eq3.33} and \eqref{eq3.34} gives
\begin{equation}
\Pb\Big(\min_{\ell u\leq z< (\ell+1)u}\{h(X_t(z),t/2)-f(X_t(z))\}\leq S/2\Big)\leq C e^{-c u^2\ell^2}+\tilde C e^{-\tilde c u^2\ell^2},
\end{equation}
from which the first sum in \eqref{eq3.32} is bounded by $C e^{-c u^2}$ for some new constants $C,c>0$.

It remains to bound \eqref{eq3.34}. We do it by comparison with the stationary case. Consider $\rho=\tfrac12(1-\alpha)-\ell u t^{-1/3}$, so that $(1-2\rho)t/2 = X_t(\ell u)$. Define $\rho_+=\rho+\tilde\kappa t^{-1/3}$ and the event $\Omega_{\tilde\kappa}$ where the backwards path of Definition~\ref{DefBackwardsPath} for stationary initial condition with density $\rho_+$ starting from $X_t(\ell u)$ is at time $0$ on $\{y\geq 0\}$. Recall that for step initial condition we can take a backwards geodesic starting from $X_t(z)$ which ends at the origin. By Proposition~\ref{PropLocalStatInitialPoint} we have $\Pb(\Omega_{\tilde\kappa})\geq 1-C e^{-c\tilde\kappa^2}$.

On $\Omega_{\tilde\kappa}$, by the comparison lemma (see Lemma~\ref{lemComparison}), we have
\begin{equation}
\begin{aligned}
&h(X_t(z),t/2)-h(X_t(\ell u),t/2)+\alpha (z-\ell u) t^{2/3}\\
&\geq h_{\rho_+}(X_t(z),t/2)-h_{\rho_+}(X_t(\ell u),t/2)+\alpha(z-\ell u) t^{2/3}\\
&=\sum_{k=1}^{X_t(z)-X_t(\ell u)}(\alpha-1+2 Z_k),
\end{aligned}
\end{equation}
where $Z_k$ are i.i.d.\ random variables with $\Pb(Z_k=1)=\rho_+$ and $\Pb(Z_k=0)=1-\rho_+$. Thus $\E(\alpha-1+2 Z_k)=2(\tilde \kappa-\ell u)$. We choose $\tilde \kappa=\ell u$, so that $W_k=2\E(Z_k)-2 Z_k= 1-\alpha-2Z_k$. Thus we get
\begin{equation}\label{eq3.37}
1-\eqref{eq3.34} \leq \Pb(\Omega^c_{\tilde\kappa}) + \Pb\bigg( \max_{0\leq z\leq u} \sum_{k=1}^{z t^{2/3}} W_k\geq \tfrac12\ell^2u^2t^{1/3}\bigg).
\end{equation}
With the chosen value of $\tilde\kappa$ we have $\Pb(\Omega_{\tilde\kappa})\geq 1-Ce^{-c\ell^2u^2}$. Using the Doob maximum inequality we get
\begin{equation}\label{eq3.38}
\Pb\bigg( \max_{0\leq z\leq u} \sum_{k=1}^{z t^{2/3}} W_k\geq \tfrac12\ell^2u^2t^{1/3}\bigg) \leq \inf_{\lambda>0} \frac{(\E(e^{\lambda W_k}))^{u t^{2/3}}}{e^{\lambda \ell^2 u^2 t^{1/3}/2}}.
\end{equation}
A computation gives, for all $0\leq \delta< 1/3$, $\eqref{eq3.38}\leq e^{-c_0 \ell^4 u^3}$ with $c_0=1/(16(1-\alpha^2))$ holds for all $t$ large enough. Thus \eqref{eq3.34} is proven as well.

\medskip
\emph{Bound on the last term in \eqref{eq3.32}}. We are going to use the one-point estimate \eqref{eqBoundUpper} of the rescaled height function of $h(\beta t,t)$, where the bound has constants uniform for $\beta$ in a bounded set of $(-1,1)$. We just have avoid taking positions corresponding to some $\beta$ going to $\pm 1$.

Consider $z\geq t^\delta u$ such that $X_t(z)\leq \beta_c t/2$ with $\beta_c=\tfrac12(1+\alpha)+u^2 t^{-2/3}/(2(1-\alpha))$. Denote $\beta=\alpha+2 z t^{-1/3}$. Then
\begin{equation}\label{eq3.39}
\Pb\Big(h(X_t(z),t/2)-f(X_t(z))\leq S/2\Big) = \Pb\big(h(\beta t/2,t/2)\leq \tfrac14(1+\beta^2)t-s (t/2)^{1/3}\big),
\end{equation}
with $s=2^{1/3} (z^2-u^2/4)\geq t^{2\delta}$ for all $t$ large enough. Thus for all $t$ large enough, by \eqref{eqBoundUpper} we get $|\eqref{eq3.39}|\leq C e^{-c t^{2\delta}}$ for some constants $C,c>0$.

Finally, for $X_t(z)>\beta_c t/2$, $h(X_t(z),t/2)\geq h(X_t(z),0)$ and $h(X_t(z),0) - f(X_t(z)) \geq S/2$, thus for such $z$, $\eqref{eq3.39}=0$.

Putting these bounds and \eqref{eq3.34} into \eqref{eq3.32} the result is proven.
\end{proof}

\begin{prop}\label{PropLocalization}
Consider TASEP with step initial condition. Let $x=x(t)=\alpha t$ the starting position of the backwards path given in Definition~\ref{DefBackwardsPath}, with $\alpha\in(-1,1)$ fixed. Then, uniformly for all $t$ large enough,
\begin{equation}
\Pb(|x(\tau)-\alpha \tau |\leq u t^{2/3}\textrm{ for all }0\leq \tau\leq t)\geq 1- C_2 e^{-c_2 u^2}
\end{equation}
for some constants $C_2,c_2>0$.
\end{prop}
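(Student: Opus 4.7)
The plan is to promote the single-time midpoint control of Proposition~\ref{PropMidTime} to a uniform-in-$\tau$ bound by a multi-scale dyadic chaining argument. I would first control the geodesic at a dense grid of dyadic times, then fill in the intervals between these using a direct Poisson-concentration bound on the motion of $x(\cdot)$ described in Definition~\ref{DefBackwardsPath}.

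For the left half, apply Proposition~\ref{PropMidTime} at $\tau=t/2$ to obtain $|x(t/2)-\alpha t/2|\leq u_1 t^{2/3}$ with failure probability $\leq C_1 e^{-c_1 u_1^2}$. Conditionally on $x(t/2)$, the restriction of $x(\cdot)$ to $[0,t/2]$ is a backwards geodesic for the same step-initial-condition TASEP ending at the (now deterministic) point $(x(t/2),t/2)$, whose effective slope $2x(t/2)/t = \alpha + O(t^{-1/3})$ lies in a compact subset of $(-1,1)$ where the constants of Proposition~\ref{PropMidTime} are uniform. Iterating at depth $k$ gives $|x(t/2^k)-x(t/2^{k-1})/2|\leq u_k (t/2^{k-1})^{2/3}$, so the normalized deviations $v_k := |x(t/2^k)-\alpha t/2^k|/t^{2/3}$ satisfy $v_k\leq v_{k-1}/2 + u_k 2^{-2(k-1)/3}$. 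Choosing $u_k = u(1+\sqrt{k})$, the summed failure probabilities $\sum_k C_1 e^{-c_1 u_k^2}$ are bounded by $Ce^{-cu^2}$, while unrolling the recursion gives $v_k\leq Cu$ uniformly in $k$. For the symmetric right-half dyadic times $\tau_k^R = t(1-2^{-k})$, I first observe that the proof of Proposition~\ref{PropMidTime} is not specific to $t/2$: replacing $t/2$ with $\gamma t$ for $\gamma\in(0,1)$ bounded away from $\{0,1\}$, the deterministic leading term in $h(y,\gamma t)+h^{\rm step}_{y,\gamma t}(\alpha t,t)$ at $y=\alpha\gamma t+v t^{2/3}$ equals $\tfrac{1}{2}(1+\alpha^2)t+\tfrac{v^2}{2\gamma(1-\gamma)} t^{1/3}$, and the same one-point upper-tail and stationary-comparison arguments (Lemma~\ref{lemComparison}, Proposition~\ref{PropLocalStatInitialPoint}) transfer verbatim to yield $\Pb(|x(\gamma t)-\alpha\gamma t|\geq u t^{2/3})\leq C_\gamma e^{-c_\gamma u^2}$. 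A second chaining, analogous to the left-half one, then controls the right-half dyadic times.

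For $\tau$ strictly between two consecutive controlled dyadic times $\tau_-<\tau_+$, Definition~\ref{DefBackwardsPath} implies $x(\cdot)$ changes position only at ``suppressed'' Poisson events at its current site, so the number of moves during a time interval of length $\Delta=\tau_+-\tau_-$ is bounded above by a Poisson random variable with parameter $\Delta$ (the total time-at-current-site integrated over the path is exactly $\Delta$, and events at different visited sites are independent); Poisson concentration then gives $|x(\tau)-x(\tau_+)|\leq 2\Delta$ with probability $\geq 1-e^{-c\Delta}$. Refining the grid to spacing $\Delta\sim u t^{2/3}$ costs only $O(\log(t^{1/3}/u))$ additional dyadic levels, easily absorbed by enlarging $u_k$ by a constant, and times within $O(u t^{2/3})$ of $0$ or $t$ are within tolerance since both $\alpha\tau$ and the geodesic displacement change by at most $O(u t^{2/3})$ there. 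Union-bounding over all events produces the advertised bound $\Pb(\sup_{\tau\in[0,t]}|x(\tau)-\alpha\tau|\geq u t^{2/3})\leq C_2 e^{-c_2 u^2}$. The main obstacle is the generalization of Proposition~\ref{PropMidTime} to arbitrary intermediate times with constants uniform in $\gamma\in[\delta,1-\delta]$: while the structure of the argument is identical, one must track how the one-point tail constants and those in Proposition~\ref{PropLocalStatInitialPoint} depend on $\gamma$. Since every ingredient involved is time-translation invariant, this is primarily a bookkeeping exercise, but it is where the main technical work sits.
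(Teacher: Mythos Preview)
Your high-level plan---dyadic chaining plus Poisson fill-in---matches the paper's (which follows~\cite{BSS14}), but the execution has a structural gap.

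\textbf{The grid is too sparse.} Your two chains $\{t/2^k\}_{k\ge1}$ and $\{t(1-2^{-k})\}_{k\ge1}$ only accumulate at $0$ and at $t$. Consecutive controlled times near the middle (e.g.\ $t/4$ and $t/2$, or $t/2$ and $3t/4$) are separated by $\Theta(t)$, and your Poisson argument only controls displacement by $O(\Delta)$ over an interval of length $\Delta$; for $\Delta\sim t/4$ this is far larger than $ut^{2/3}$. Adding ``additional dyadic levels'' to your chain refines the mesh only near the endpoints, never in the bulk. The paper instead controls \emph{all} dyadic times $kt/2^n$, $1\le k\le 2^n-1$, at each level $n$. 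For odd $k$ it uses the level-$(n-1)$ controls at the two neighbours $(k\pm1)t/2^n$, introduces a \emph{fresh} step initial condition placed at the deterministic level-$(n-1)$ threshold over the short interval $[(k-1)t/2^n,(k+1)t/2^n]$, proves a pathwise domination $x(s)\le\tilde x(s)$ between the original backwards path and the backwards path $\tilde x$ of this fresh problem, and then applies Proposition~\ref{PropMidTime} to $\tilde x$ over the time span $2^{-n+1}t$. This yields a genuine mesh of spacing $2^{-N}t\sim t^{1/2}$, after which the Poisson fill-in succeeds.

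\textbf{The conditioning step is not rigorous as written.} The value $x(t/2)$ depends on the Poisson randomness in $[0,t/2]$ (through $h(\cdot,t/2)$), so the conditional law of the restricted path given $x(t/2)=y$ is not that of a fresh backwards path from a deterministic $(y,t/2)$; you cannot simply invoke Proposition~\ref{PropMidTime} conditionally. This is repairable---backwards paths of the same $h$ from ordered starting points remain ordered (they may merge but never cross), so one can sandwich by the paths from the deterministic extremes $y_\pm=\alpha t/2\pm u_1 t^{2/3}$---but you do not say this. The paper's fresh-step-IC domination (the passage around Figure~\ref{FigOrderingPaths}) handles exactly this issue. Note also that with that device in hand the paper never needs your generalization of Proposition~\ref{PropMidTime} to arbitrary $\gamma$: it is applied as a black box to step problems of length $2^{-n+1}t$ only, so the obstacle you flag as ``the main technical work'' does not arise.
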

\begin{proof}
The idea follows the approach of~\cite{BSS14}, once we have the localization of the mid-point backwards path from Proposition~\ref{PropMidTime}.

Let us set $N=\min\{n: 2^{-n} t\leq t^{1/2}\}$. Let us choose $u_1<u_2<u_3<\ldots$ with $u_1=u/10$ and $u_n-u_{n-1}= u_1 2^{-(n-1)/2}$. Define the following events
\begin{equation}
\begin{aligned}
A_n&=\{x(k 2^{-n}t)\leq \alpha k 2^{-n}t+u_n t^{2/3},1\leq k\leq 2^n-1\},\\
B_{n,k}&=\{x(k 2^{-n} t)> \alpha k 2^{-n} t + u_n t^{2/3}\},\quad k=1,\ldots,2^n-1,\\
L&=\{\sup_{x\in [0,1]}|x( (k+x) 2^{-N}t)-x(k 2^{-N} t)-\alpha x 2^{-N}t|\leq \tfrac15 u t^{2/3},0\leq k\leq 2^N-1\},\\
G&=\{x(\tau)\leq \alpha \tau +u t^{2/3}\textrm{ for all }0\leq \tau\leq t\}.
\end{aligned}
\end{equation}
First of all, notice that $A_n^c=\bigcup_{k=1}^{2^n-1} B_{n,k}$. Further, since $\lim_{n\to\infty} u_n<\tfrac45 u$, we have
\begin{equation}
\bigcup_{n=1}^N\bigcup_{k=1}^{2^n-1} (B_{n,k}\cap A_{n-1}) \supseteq \{x(k 2^{-N} t)\geq \alpha k 2^{-N} t+ \tfrac45 u t^{2/3}\textrm{ for some }k=1,\ldots,2^N-1\},
\end{equation}
with $A_0=\Omega$ being the whole outcome space. From this we get
\begin{equation}
G \supseteq \bigg(\bigcup_{n=1}^N\bigcup_{k=1}^{2^n-1} (B_{n,k}\cap A_{n-1}) \bigg)^c\cap L,
\end{equation}
since the first term means that in the sequence of discrete $2^N-1$ points the backwards path $x$ is not farther than $\tfrac45 u t^{2/3}$ to the right of the characteristic and the event $L$ controls the possible excursion between these times. Thus
\begin{equation}\label{eq3.44}
\Pb(G)\leq \Pb(L^c)+\sum_{n=1}^N\sum_{k=1}^{2^n-1} \Pb(B_{n,k}\cap A_{n-1}).
\end{equation}

\emph{Bound on $\Pb(L^c)$.} Since the jumps of $x(\tau)$ are stochastically bounded by a Poisson process with intensity $1$ and Poisson distribution decay faster than exponential, we immediately get $\Pb(L^c)\leq 2^N C e^{-u t^{1/6}}\leq e^{-c_2 u^2}$ for all $t$ large enough (for any choice of $c_2>0$).

\emph{Bound on $\Pb(B_{n,k}\cap A_{n-1})$.} For $k$ even, the two events are incompatible so its probability it $0$. Thus consider odd $k$. We denote by $x_{\rm mid}(\tau)=x(\tau/2)-\alpha \tau/2$ the centered position of the midpoint over a time-span $\tau$, given $x(\tau)=\alpha \tau$. If $A_{n-1}$ holds, then the backwards path at time $t_1=(k-1) 2^{-n} t$ and $t_2=(k+1) 2^{-n} t$ are at to the left of $x_i=\alpha t_i+u_{n-1}t^{2/3}$, $i=1,2$.

Therefore under $A_{n-1}$ we have that $x(t_1)\leq x_1$ and $x(t_2)\leq x_2$. Consider the model with step initial condition from time $t_1$ at position $x_1$ and denote by $\tilde x$ the starting at time $t_2$ from position $x_2$. Both models are coupled by the basic coupling. Let us verify that for $s\in [t_1,t_2]$ we have $x(s)\leq \tilde x(s)$.
The backwards path $x$ (resp.\ $\tilde x$) after time $t_1$ is determined by the evolution of the height function $h$ (resp.\ $\tilde h$) starting with the step initial condition at time $t_1$ and position $x(t_1)$ (resp.\ $x_1$). Then, since initially there are particles for $\tilde h$ whenever there is one in $h$, at any time $s\geq t_1$ we can have the local configurations as in Figure~\ref{FigOrderingPaths}.
\begin{figure}
\begin{center}
\psfrag{x}[c]{\small $x$}
\psfrag{h}[c]{$h$}
\psfrag{ht}[c]{$\tilde h$}
\includegraphics[height=3cm]{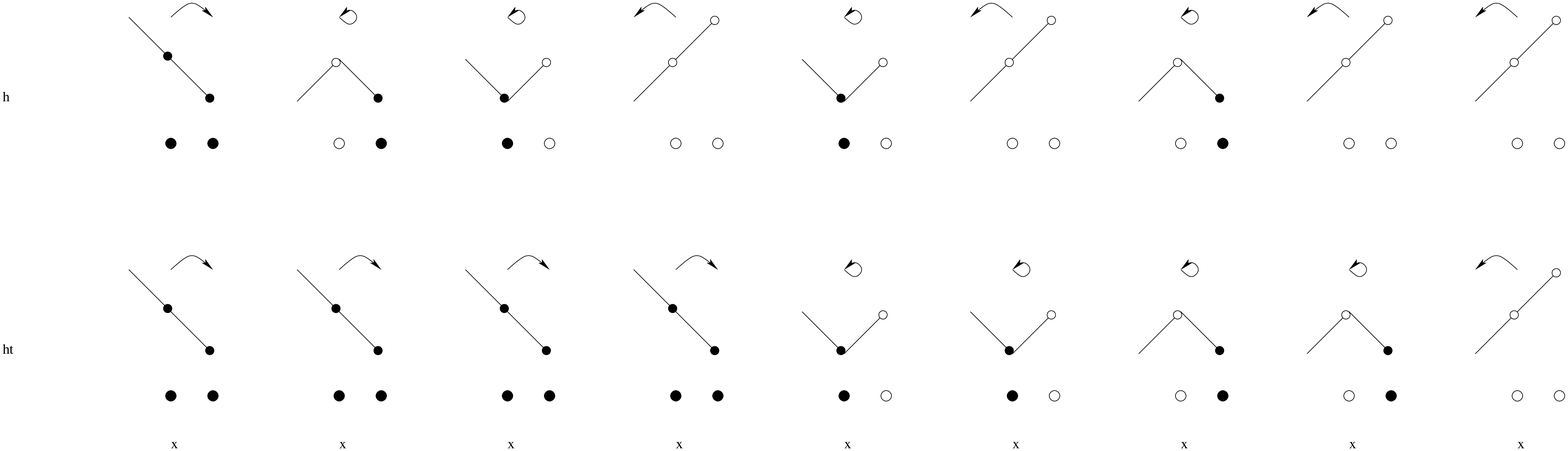}
\caption{Possible local configurations for the height function $h$ and $\tilde h$. The arrows indicate the evolution of the backwards path if they are $x$ when an event of the Poisson point occurs. The black dots represent the TASEP particles associated with the height functions.}
\label{FigOrderingPaths}
\end{center}
\end{figure}
This implies that if $x(t_2)\leq x_2=\tilde x(t_2)$, then the evolution of the backwards path satisfies $x(s)\leq \tilde x(s)$ for all $s\in [t_1,t_2]$. Indeed, if at some point $s$ they become equal, then it can not happen that $x(s_-)>\tilde x(s_-)$, since if $x(s_-)=x(s)+1$, then also $\tilde x(s_-)=\tilde x(s)+1$ and if $\tilde x(s_-)=\tilde x(s)-1$ then also $x(s_-)=x(s)-1$, see Figure~\ref{FigOrderingPaths}.

Furthermore, the law of $\tilde x(s)$ is as the ones of a step initial condition over a time-span $\tau=t_2-t_1=2^{-n+1} t$. Consequently, using Proposition~\ref{PropMidTime} we get
\begin{equation}
\begin{aligned}
\Pb(B_{n,k}\cap A_{n-1}) &\leq \Pb(x_{\rm mid}(2^{-n+1}t)\geq (u_n-u_{n-1})t^{2/3})\\
&\leq C_1 e^{-c_1(u_n-u_{n-1})^2 2^{4(n-1)/3}} = C_1 e^{-c_1 u_1^2 2^{(n-1)/3}}
\end{aligned}
\end{equation}
with our choice of the sequence of $u_n-u_{n-1}$. Using this bound we get that the second term in \eqref{eq3.44} is bounded by $C e^{-c_1 u^2/100}$ for some constant $C$.
By appropriate choice of constants $C_2,c_2$ the result is proven.
\end{proof}

\section{Asymptotic results for the height function}\label{sec:StepAsymp}

\subsection{Asymptotic decoupling}
Here we prove that the height functions are asymptotically independent whenever the positions where we observe the height functions at time $t$ are at distance much larger than $t^{2/3}$.
\begin{thm}\label{thm:DecorrelationHeights}
For $-1 < \alpha <1$ define
\begin{equation}
H_{t}(\alpha)=\frac{h\left( \alpha t, t \right) -\frac12(1+\alpha^2) t}{-2^{-1/3}(1-\alpha^2)^{2/3}t^{1/3}}.
\end{equation}
Let us consider sequences $\{\alpha_{1,t},\ldots,\alpha_{n,t}\}$ satisfying
\begin{equation}
\lim_{t\to\infty} |\alpha_{i,t}-\alpha_{j,t}|t^{1/3}=\infty
\end{equation}
for all $i\neq j$. Then
\begin{equation}
\lim_{t\to\infty}(H_{t}(\alpha_{1,t}),\ldots,H_{t}(\alpha_{n,t}))\stackrel{d}{=}(\xi_1,\ldots,\xi_n),
\end{equation}
where $\xi_1,\ldots,\xi_n$ are independent and (standard) GUE Tracy-Widom distributed.
\end{thm}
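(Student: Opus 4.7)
The natural strategy is to combine the slow-decorrelation result (Proposition~\ref{PropSlowDec}) with the localization of backwards geodesics along characteristic lines (Proposition~\ref{PropLocalization}). The goal is to show that, up to errors vanishing in probability on the $t^{1/3}$ scale, each rescaled height $H_t(\alpha_{i,t})$ is measurable with respect to the Poisson events in a narrow space-time tube around the $i$-th characteristic, and that these tubes can be arranged to be pairwise disjoint. Independence of the Poisson process on disjoint space-time regions then yields asymptotic independence of the limits.

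Concretely, the plan is as follows. First, choose an intermediate time scale $\tau_t$ satisfying both $\tau_t/t\to 0$ and $\tau_t\min_{i\neq j}|\alpha_{i,t}-\alpha_{j,t}|/t^{2/3}\to\infty$; the hypothesis $|\alpha_{i,t}-\alpha_{j,t}|t^{1/3}\to\infty$ makes such a choice possible, for instance by taking $\tau_t$ as the geometric mean of the two constraints. By Proposition~\ref{PropSlowDec} applied at each $\alpha_{i,t}$, the rescaled quantity $H_t(\alpha_{i,t})$ differs by $o_\Pb(1)$ from the analogous rescaling of $h^{\rm step}_{\alpha_{i,t}\tau_t,\tau_t}(\alpha_{i,t}t,t)$, i.e.\ the step-initial height at time $t$ restarted at position $\alpha_{i,t}\tau_t$ at time $\tau_t$. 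For each such process, Proposition~\ref{PropLocalization} (applied on the time interval $[\tau_t,t]$) tells us that the backwards geodesic ending at $(\alpha_{i,t}t,t)$ lies with probability at least $1-Ce^{-cu^2}$ inside the tube $T_i(u)=\{(x,\tau):|x-\alpha_{i,t}\tau|\leq u t^{2/3},\ \tau_t\leq\tau\leq t\}$, and the choice of $\tau_t$ guarantees that the $T_i(u)$ are pairwise disjoint for every fixed $u$ once $t$ is large enough.

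The decisive step is then the replacement argument encoded around \eqref{eq3.14b}: on the event that the $i$-th geodesic stays in $T_i(u)$, the value $h^{\rm step}_{\alpha_{i,t}\tau_t,\tau_t}(\alpha_{i,t}t,t)$ coincides with the analogous height obtained by freezing the configuration outside $T_i(u)$ to the deterministic step profile (slope $-1$ to the left, slope $+1$ to the right) and using only the Poisson clocks inside $T_i(u)$. Since the tubes are disjoint, these truncated heights are independent of each other. Combining this with the classical one-point convergence of $H_t(\alpha)$ to a standard GUE Tracy--Widom variable, valid uniformly for $\alpha$ in compact subsets of $(-1,1)$, and sending first $t\to\infty$ with $u$ fixed and then $u\to\infty$, yields joint convergence to $n$ independent GUE Tracy--Widom variables.

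The main technical obstacle will be executing the replacement argument in the multi-point setting: one has to check that on the intersection of the $n$ localization events the $n$ truncated heights are genuinely measurable with respect to disjoint subsets of the underlying Poisson clocks, and that the cumulative $o_\Pb(1)$ errors from slow decorrelation, together with the $nCe^{-cu^2}$ failure probability from the $n$ localization events, can be combined with the final $u\to\infty$ limit without spoiling tightness. Since both Proposition~\ref{PropSlowDec} and Proposition~\ref{PropLocalization} are already quantitative and have constants uniform in the relevant parameters, this is expected to go through, but it requires careful bookkeeping of the various scales $\tau_t$, $u$ and the spacings $|\alpha_{i,t}-\alpha_{j,t}|t^{1/3}$.
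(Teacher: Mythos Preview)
Your proposal is correct and follows essentially the same two-step strategy as the paper: first use Proposition~\ref{PropSlowDec} to replace each $H_t(\alpha_{i,t})$ by the height of a step process restarted at time $\tau_t$, then use Proposition~\ref{PropLocalization} to confine each restarted height to a tube around its characteristic, the tubes being disjoint by the choice of scales. The only notable difference is in the bookkeeping of parameters: the paper takes a \emph{growing} tube width $\sigma_t\to\infty$ (with $\sigma_t t^{2/3}\ll\tau_t$), so that the localization probability tends to $1$ directly and no final $u\to\infty$ limit is needed, whereas you keep $u$ fixed and send $u\to\infty$ after $t\to\infty$. Your more explicit condition $\tau_t\min_{i\neq j}|\alpha_{i,t}-\alpha_{j,t}|/t^{2/3}\to\infty$ is in fact the correct disjointness constraint when the $\alpha_{i,t}$ are allowed to coalesce, and is slightly sharper than what the paper writes down.
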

\begin{proof}
\emph{Step 1: Use decorrelation of small times.} Choose any sequence $\{\tau_t\}_{t}$ such that $\tau_t/t\to 0$ but $\tau_t/t^{2/3}\to\infty$ as $t\to\infty$.
Define
\begin{equation}
\widetilde H_{t}(\alpha)=\frac{h^{\rm step}_{\alpha \tau_t,\tau_t}(\alpha t,t) -\frac12(1+\alpha^2) (t-\tau_t)}{-2^{-1/3}(1-\alpha^2)^{2/3}t^{1/3}}.
\end{equation}
By Proposition~\ref{PropSlowDec}, for all $\e>0$,
\begin{equation}\label{eq4.5}
\lim_{t\to\infty} \Pb\left(\bigcap_{k=1}^n \{|\widetilde H_{t}(\alpha_{i,t})-H_{t}(\alpha_{i,t})|\leq \e\}\right)=1.
\end{equation}

\emph{Step 2: Use localization of the geodesics.} Now we apply Proposition~\ref{PropLocalization} to the step initial conditions starting from positions $\alpha_{i,\tau_t}\tau_t$ at time $\tau_t$. Let $x_i$ be the backwards geodesic for this process starting from $x_i(t)=\alpha_{i,t}t$. Define the event
\begin{equation}
\Omega^{\rm loc}_{\alpha_{i,t}}=\{\omega: |x(s)-\alpha_{i,t} s|\leq \sigma_t t^{2/3}\textrm{ for all }\tau_t\leq s\leq t\},
\end{equation}
with $\sigma_t$ to be specified later. Let us set $\widetilde H^{\rm loc}_{t}(\alpha)=\widetilde H_{t}(\alpha) \Id_{\Omega^{\rm loc}_{\alpha}}$ for $\alpha\in\{\alpha_{1,t},\ldots,\alpha_{n,t}\}$. Then, Proposition~\ref{PropLocalization}, for any sequence $\sigma_t\to\infty$ as $t\to\infty$,
\begin{equation}\label{eq4.7}
\lim_{t\to\infty}\Pb\left( \widetilde H^{\rm loc}_{t}(\alpha_{i,t})=\widetilde H_{t}(\alpha_{i,t})\right)=1.
\end{equation}
Therefore, if we choose $t^{2/3}\ll \sigma_t t^{2/3} \ll \tau_t \ll t$, then
\begin{equation}
\bigcap_{k=1}^n \{(x,s): |\alpha_{i,t} s-x|\leq \sigma_t t^{2/3}\textrm{ and }\tau_t\leq s\leq t\} = \emptyset
\end{equation}
for all $t\geq t_0$. Thus for all $t$ large enough, the random variables $\widetilde H^{\rm loc}_{t,\alpha_{i,t}}$, $i,\ldots,n$, are independent. Combining this with \eqref{eq4.5} and \eqref{eq4.7} the proof is completed.
\end{proof}

\begin{cor}\label{cor:step1and13}
For $-1 < \alpha <1$ and $\gamma \in \R$, let
\begin{equation}
\xi_{t,\alpha}=\frac{\mathcal{N} \left( \alpha t + \gamma t^{1/3} , t \right) -\frac14(1-\alpha)^2 t + \frac12\gamma (1- \alpha) t^{1/3}}{-2^{-4/3}(1-\alpha^2)^{2/3}t^{1/3}}.
\end{equation}
Then, for distinct $\alpha_1,\ldots,\alpha_n$, it holds
\begin{equation}
\lim_{t\to\infty}(\xi_{t,\alpha_1},\ldots,\xi_{t,\alpha_n})\stackrel{d}{=}(\xi_1,\ldots,\xi_n),
\end{equation}
where $\xi_1,\ldots,\xi_n$ are independent and (standard) GUE Tracy-Widom distributed.
\end{cor}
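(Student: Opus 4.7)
The strategy is to reduce the corollary to Theorem~\ref{thm:DecorrelationHeights} by converting the particle-counting statement into a height-function statement, absorbing the $\gamma t^{1/3}$ offset into a small perturbation of $\alpha$, and then checking that the resulting centering/scaling match the one appearing in $\xi_{t,\alpha}$ up to errors of order $o(1)$.

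First I would use the identity $h(x,t) = 2\mathcal{N}(x,t)+x$, valid in the setting of step initial condition where $\mathcal{N}$ is well-defined, to express
\begin{equation*}
\mathcal{N}\bigl(\alpha t+\gamma t^{1/3},t\bigr)
= \tfrac{1}{2}\bigl[\,h\bigl(\alpha t+\gamma t^{1/3},t\bigr)-\alpha t-\gamma t^{1/3}\,\bigr].
\end{equation*}
For each index $i$ I would then set $\alpha_{i,t}:=\alpha_i+\gamma t^{-2/3}$, so that $\alpha_{i,t}t=\alpha_i t+\gamma t^{1/3}$ and the above evaluation is exactly $H_t(\alpha_{i,t})$ up to deterministic centering and scaling. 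Since the $\alpha_i$ are distinct, for $i\ne j$ one has $|\alpha_{i,t}-\alpha_{j,t}|\,t^{1/3}=|\alpha_i-\alpha_j|\,t^{1/3}\to\infty$, so the separation hypothesis of Theorem~\ref{thm:DecorrelationHeights} is satisfied.

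Next I would Taylor expand the deterministic pieces in the small parameter $\gamma t^{-2/3}$. Writing $\tilde\alpha_i:=\alpha_{i,t}$, one has
\begin{equation*}
\tfrac{1}{2}(1+\tilde\alpha_i^{\,2})t=\tfrac{1}{2}(1+\alpha_i^2)t+\alpha_i\gamma\, t^{1/3}+\tfrac{1}{2}\gamma^2 t^{-1/3},
\qquad
(1-\tilde\alpha_i^{\,2})^{2/3}=(1-\alpha_i^2)^{2/3}+O(t^{-2/3}).
\end{equation*}
Substituting these into the definition of $H_t(\tilde\alpha_i)$ and using $\tfrac{1}{2}(1+\alpha_i^2)-\alpha_i=\tfrac{1}{2}(1-\alpha_i)^2$ gives
\begin{equation*}
\mathcal{N}\bigl(\alpha_i t+\gamma t^{1/3},t\bigr)
=\tfrac{1}{4}(1-\alpha_i)^2 t-\tfrac{1}{2}\gamma(1-\alpha_i)t^{1/3}
-2^{-4/3}(1-\alpha_i^2)^{2/3}t^{1/3}\,H_t(\tilde\alpha_i)+o(t^{1/3}),
\end{equation*}
with the $o(t^{1/3})$ remainder \emph{deterministic}. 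Dividing by $-2^{-4/3}(1-\alpha_i^2)^{2/3}t^{1/3}$ yields $\xi_{t,\alpha_i}=H_t(\tilde\alpha_i)+o(1)$, with the $o(1)$ a deterministic sequence.

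Finally I would invoke Theorem~\ref{thm:DecorrelationHeights} with the sequences $\{\alpha_{1,t},\ldots,\alpha_{n,t}\}$, which gives joint convergence of $(H_t(\tilde\alpha_1),\ldots,H_t(\tilde\alpha_n))$ to $n$ independent GUE Tracy-Widom variables; combining with the deterministic $o(1)$ correction above (and Slutsky) transfers the joint convergence to $(\xi_{t,\alpha_1},\ldots,\xi_{t,\alpha_n})$. There is no real obstacle here, as all the nontrivial work (slow decorrelation, localization, and the one-shock decoupling) is done in Theorem~\ref{thm:DecorrelationHeights}; the only thing to be careful about is keeping the Taylor expansion to sufficient order so that the $\alpha\gamma t^{1/3}$ term, which is of the same order as the fluctuations, is correctly accounted for and not mistakenly absorbed into the error.
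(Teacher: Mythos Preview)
Your proposal is correct and follows essentially the same approach as the paper: both use the relation $\mathcal{N}(x,t)=\tfrac12(h(x,t)-x)$, set $\alpha_{i,t}=\alpha_i+\gamma t^{-2/3}$, and reduce to Theorem~\ref{thm:DecorrelationHeights} after Taylor expanding the law-of-large-numbers centering. Your write-up is in fact more detailed than the paper's, explicitly verifying the separation hypothesis and tracking the $o(1)$ deterministic remainder through Slutsky.
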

\begin{proof}
This is obtained by using the relation ${\cal N}(x,t)=\tfrac12(h(x,t)-x)$ and Theorem~\ref{thm:DecorrelationHeights} with $\alpha_{i,t}=\alpha_i+\gamma t^{-2/3}$. The one-point distribution is given by
\begin{equation}\label{eq1}
\lim_{t\to\infty}\frac{\mathcal{N} \left( \alpha_{i,t} t, t \right) -\frac14(1-\alpha_{i,t})^2 t}{-2^{-4/3}(1-\alpha_{i,t}^2)^{2/3}t^{1/3}} \stackrel{d}{=} \xi_i.
\end{equation}
With the choice of $\alpha_{i,t}$, the expansion of the law of large number up to $O(t^{1/3})$ is
\begin{equation}
\tfrac14(1-\alpha_{i,t})^2 t=\tfrac14(1-\alpha_i)^2t - \tfrac12 \gamma ( 1-\alpha_i)t^{1/3}+O(t^{-1/3}).
\end{equation}
\end{proof}

\begin{cor}\label{cor:stepDeltaLarge}
For $-1 < \alpha <1$, $\beta, \gamma \in \R$, and $2/3 < \delta < 1$. Define
\begin{equation}
\zeta_{t,\beta}=\frac{\mathcal{N} \left( \alpha t + \beta t^{\delta} + \gamma t^{4/3 - \delta} , t \right)-\frac{(1-\alpha)^2}{4} t + \frac{(1-\alpha)\beta}{2} t^{\delta}+\frac{(1-\alpha)\gamma t^{4/3-\delta}}{2}-\frac{\beta^2 t^{2\delta-1}}{4}-\frac{\beta\gamma t^{1/3}}{2}}{- \frac{(1-\alpha^2)^{2/3}}{2^{4/3}} t^{1/3}}.
\end{equation}
Then, for distinct $\beta_1,\ldots,\beta_n$,
\begin{equation}
\lim_{t\to\infty}(\zeta_{t,\beta_1},\ldots,\zeta_{t,\beta_n})\stackrel{d}{=}(\zeta_{1},\ldots,\zeta_{n}),
\end{equation}
where $\zeta_{1},\ldots,\zeta_{n}$ are independent and (standard) GUE Tracy-Widom distributed.
\end{cor}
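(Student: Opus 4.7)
The strategy is a direct analogue of the proof of Corollary~\ref{cor:step1and13}: I would apply Theorem~\ref{thm:DecorrelationHeights} to a suitably chosen sequence of slopes and then convert the normalization of $H_t$ into the normalization of $\mathcal{N}$ via the identity $\mathcal{N}(x,t)=\tfrac12(h(x,t)-x)$. The only genuinely new input is that the law-of-large-numbers expansion of $\tfrac14(1-\alpha_{i,t})^2 t$ must now be tracked to higher order, because the correction from the $t^\delta$ shift is not negligible in the $t^{1/3}$ scale.

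Concretely, I would set
\begin{equation}
\alpha_{i,t}:=\alpha+\beta_i t^{\delta-1}+\gamma\, t^{1/3-\delta},
\end{equation}
so that $\alpha_{i,t}t=\alpha t+\beta_i t^\delta+\gamma t^{4/3-\delta}$ matches the argument of $\mathcal{N}$. For distinct $\beta_i\neq\beta_j$ the assumption $\delta>2/3$ gives
\begin{equation}
|\alpha_{i,t}-\alpha_{j,t}|\,t^{1/3}=|\beta_i-\beta_j|\,t^{\delta-2/3}\longrightarrow\infty,
\end{equation}
which is exactly the hypothesis of Theorem~\ref{thm:DecorrelationHeights}. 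Hence $(H_t(\alpha_{1,t}),\ldots,H_t(\alpha_{n,t}))$ converges jointly to $(\xi_1,\ldots,\xi_n)$ independent GUE Tracy-Widom.

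Next I would perform the Taylor expansion of the centering. A direct computation gives
\begin{equation}
\tfrac14(1-\alpha_{i,t})^2 t=\tfrac14(1-\alpha)^2 t-\tfrac{(1-\alpha)\beta_i}{2}t^{\delta}-\tfrac{(1-\alpha)\gamma}{2}t^{4/3-\delta}+\tfrac{\beta_i^2}{4}t^{2\delta-1}+\tfrac{\beta_i\gamma}{2}t^{1/3}+\tfrac{\gamma^2}{4}t^{5/3-2\delta},
\end{equation}
and the last term is $o(t^{1/3})$ precisely because $5/3-2\delta<1/3$, i.e. $\delta>2/3$. This is the only place where the restriction on $\delta$ is used, and it is where any mild care is required. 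Combining this expansion with the identity $\mathcal{N}(x,t)=\tfrac12(h(x,t)-x)$ gives
\begin{equation}
H_t(\alpha_{i,t})=\frac{\mathcal{N}(\alpha_{i,t}t,t)-\tfrac14(1-\alpha_{i,t})^2 t}{-2^{-4/3}(1-\alpha_{i,t}^2)^{2/3}t^{1/3}}=\frac{(1-\alpha^2)^{2/3}}{(1-\alpha_{i,t}^2)^{2/3}}\zeta_{t,\beta_i}+o(1),
\end{equation}
where the $o(1)$ absorbs the $\tfrac{\gamma^2}{4}t^{5/3-2\delta}$ term after division by $t^{1/3}$.

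Finally, since $\alpha_{i,t}\to\alpha$, the prefactor $(1-\alpha^2)^{2/3}/(1-\alpha_{i,t}^2)^{2/3}\to 1$, and Slutsky's theorem together with the joint convergence of the $H_t(\alpha_{i,t})$ to independent GUE Tracy-Widom random variables gives the claim. There is no real obstacle beyond the bookkeeping: the essential content---asymptotic decoupling and one-point Tracy-Widom fluctuations---is already contained in Theorem~\ref{thm:DecorrelationHeights}, and the role of the condition $\delta\in(2/3,1)$ is solely to ensure both the separation condition $|\alpha_{i,t}-\alpha_{j,t}|t^{1/3}\to\infty$ and the negligibility of the $\gamma^2 t^{5/3-2\delta}$ term in the law of large numbers.
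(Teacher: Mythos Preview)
Your proposal is correct and follows essentially the same approach as the paper: apply Theorem~\ref{thm:DecorrelationHeights} with $\alpha_{i,t}=\alpha+\beta_i t^{\delta-1}+\gamma t^{1/3-\delta}$, use $\mathcal{N}(x,t)=\tfrac12(h(x,t)-x)$, and expand $\tfrac14(1-\alpha_{i,t})^2 t$ up to $o(t^{1/3})$. You in fact give more detail than the paper does, explicitly checking the separation hypothesis and the negligibility of the $\tfrac{\gamma^2}{4}t^{5/3-2\delta}$ term.
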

\begin{proof}
This is obtained by using the relation ${\cal N}(x,t)=\tfrac12(h(x,t)-x)$ and Theorem~\ref{thm:DecorrelationHeights} with $\alpha_{i,t}=\alpha+ \beta_i t^{\delta-1} + \gamma t^{1/3 - \delta}$, for which the expansion of the law of large number up to $O(t^{1/3})$ is
\begin{equation}
\tfrac14(1-\alpha_{i,t})^2 t=\tfrac14(1-\alpha)^2t-\tfrac12\beta_i(1-\alpha)t^\delta-\tfrac12\gamma(1-\alpha)t^{4/3-\delta}+\tfrac12\beta_i\gamma t^{1/3}+\tfrac14\beta_i^2 t^{2\delta-1}+o(t^{1/3}).
\end{equation}
\end{proof}

\subsection{Local stationarity}
Now we prove local Gaussian increments for the height function.
\begin{thm}\label{thm:LocalGaussianHeight}
For any $-1< \alpha <1$, $\beta, \gamma_1,\gamma_2 \in \R$, with $\gamma_1< \gamma_2$, and $0< \delta< 2/3$, define
$x_i=\alpha t+\beta t^{1-\delta/2}+\gamma_i t^\delta$. Then we have
\begin{equation}
\lim_{t\to\infty}\frac{h^{\rm step}(x_2,t)-h^{\rm step}(x_1,t)-(\gamma_2-\gamma_1)(\alpha t^\delta+\beta t^{\delta/2})}{\sqrt{(1-\alpha^2)(\gamma_2-\gamma_1)}t^{\delta/2}} \stackrel{d}{=} G(0,1),
\end{equation}
where $G(0,1)$ is centered Gaussian random variable with variance $1$.
\end{thm}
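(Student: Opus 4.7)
The plan is to compare the step-IC height function locally with two suitably chosen stationary TASEPs via the comparison lemma (Lemma~\ref{lemComparison}), exploiting that in stationary TASEP with density $\rho$ the marginal configuration $\eta(t)$ remains Bernoulli($\rho$) product, so the increment
\begin{equation*}
h^\rho(x_2,t) - h^\rho(x_1,t) = \sum_{y=x_1+1}^{x_2}(1-2\eta_y(t))
\end{equation*}
is an i.i.d.\ sum of $\{+1,-1\}$-valued variables whose limit is Gaussian by the classical CLT.

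Set $\rho_0 := (1-\alpha)/2$ and choose
\begin{equation*}
\rho^\uparrow := \tfrac12(1-x_1/t) + M t^{-1/3}, \qquad \rho^\downarrow := \tfrac12(1-x_2/t) - M t^{-1/3},
\end{equation*}
for a large constant $M>0$; both equal $\rho_0 - \tfrac{\beta}{2}t^{-\delta/2} + O(t^{-1/3})$ and satisfy $\rho^\uparrow > \rho^\downarrow$. Couple the step-IC and the two stationary TASEPs through the same graphical construction, and denote by $\pi^{\rm step}_{x_i},\ \pi^\uparrow_{x_i},\ \pi^\downarrow_{x_i}$ the respective backward geodesics from $(x_i,t)$. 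By Remark~\ref{remGeodesics} the step geodesics may be taken to land at the origin at time $0$. Proposition~\ref{PropLocalStatInitialPoint} combined with translation invariance of the stationary law implies that with probability at least $1 - Ce^{-cM^2}$ one has $\pi^\uparrow_{x_1}(0) \ge M t^{2/3}$ and $\pi^\downarrow_{x_2}(0) \le -M t^{2/3}$. On this event $\pi^\uparrow_{x_1}$ starts at time $t$ strictly to the left of $\pi^{\rm step}_{x_2}$ and ends at time $0$ strictly to its right, so these two nearest-neighbor integer-valued paths in reversed time must meet; symmetrically $\pi^{\rm step}_{x_1}$ meets $\pi^\downarrow_{x_2}$.

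Applying Lemma~\ref{lemComparison} twice on this intersection event yields the sandwich
\begin{equation*}
h^{\rho^\uparrow}(x_2,t) - h^{\rho^\uparrow}(x_1,t) \le h^{\rm step}(x_2,t) - h^{\rm step}(x_1,t) \le h^{\rho^\downarrow}(x_2,t) - h^{\rho^\downarrow}(x_1,t).
\end{equation*}
For $\rho\in\{\rho^\uparrow,\rho^\downarrow\}$, each side is a sum of $(x_2-x_1) = (\gamma_2-\gamma_1)t^\delta$ i.i.d.\ terms of mean $1-2\rho$ and variance $4\rho(1-\rho)$. Direct expansion gives total mean
\begin{equation*}
(\gamma_2-\gamma_1)\alpha\, t^\delta + (\gamma_2-\gamma_1)\beta\, t^{\delta/2} + O(t^{2\delta-1}) + O(t^{\delta-1/3})
\end{equation*}
and total variance $(1-\alpha^2)(\gamma_2-\gamma_1)t^\delta(1+o(1))$. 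The hypothesis $\delta<2/3$ makes $\max(2\delta-1,\delta-1/3) < \delta/2$, so both correction terms in the mean are $o(t^{\delta/2})$, and the classical CLT yields, for both choices of $\rho$,
\begin{equation*}
\frac{h^{\rho}(x_2,t)-h^{\rho}(x_1,t) - (\gamma_2-\gamma_1)(\alpha t^\delta+\beta t^{\delta/2})}{\sqrt{(1-\alpha^2)(\gamma_2-\gamma_1)}\,t^{\delta/2}} \stackrel{d}{\longrightarrow} G(0,1).
\end{equation*}

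The conclusion follows by squeezing distribution functions: denoting by $F$ the standard Gaussian CDF and by $Y_t$ the normalized step-IC increment, one obtains $|\Pb(Y_t \le s) - F(s)| \le Ce^{-cM^2} + o(1)$ for every fixed $s\in\R$, and $M$ can be sent to infinity. The main delicate point is the parameter matching: the density shifts must simultaneously cancel the $\beta t^{\delta/2}$ term in the law of large numbers and leave the comparison error of size $o(t^{\delta/2})$. This is precisely why $\delta=2/3$ is sharp --- beyond this threshold the $t^{2\delta-1}$ correction to the mean would exceed the Gaussian fluctuation scale and the KPZ Tracy--Widom regime of Corollary~\ref{cor:stepDeltaLarge} takes over.
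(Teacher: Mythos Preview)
Your proof is correct and follows essentially the same route as the paper's: sandwich the step increment between two stationary increments via Lemma~\ref{lemComparison}, use Proposition~\ref{PropLocalStatInitialPoint} together with Remark~\ref{remGeodesics} to guarantee the geodesic intersections, and conclude by the CLT for Bernoulli sums. The only cosmetic difference is that the paper takes a single base density $\rho=\tfrac12(1-\alpha-\beta t^{-\delta/2})$ and perturbs by $\pm\kappa_t t^{-1/3}$ with $\kappa_t\to\infty$ slowly (so that the error event vanishes in one shot), whereas you keep $M$ fixed, obtain $\limsup_t|\Pb(Y_t\le s)-F(s)|\le Ce^{-cM^2}$, and then send $M\to\infty$; both arguments are equivalent.
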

\begin{proof}
Let us set $\rho=\tfrac12(1-\alpha-\beta t^{-\delta/2})$, i.e., satisfying $(1-2\rho)t=\alpha t+\beta t^{1-\delta/2}$. We define for $\kappa>0$ the densities
\begin{equation}
\rho_\pm=\rho \pm \kappa t^{-1/3}.
\end{equation}
Define the sets
\begin{equation}
G_+ = \{x_{\rho_+}(0)<0 | x_{\rho_+}(t)=x_i\},\quad G_- = \{x_{\rho_-}(0)>0 | x_{\rho_-}(t)=x_i\},
\end{equation}
where $x_i$ are the starting points for the backwards geodesic in the stationary case with density $\rho_\pm$.

By Proposition~\ref{PropLocalStatInitialPoint}, for the stationary initial condition with density $\rho_+$ and using translation invariance we have
\begin{equation}
\begin{aligned}
1-C e^{-c M^2}& \leq \Pb(|x_{\rho_+}(0)|< M t^{2/3} | x_{\rho_+}(t)=(1-2\rho_+)t) \\
&\leq \Pb(x_{\rho_+}(0)<M t^{2/3} | x_{\rho_+}(t)=(1-2\rho_+)t)\\
&= \Pb(x_{\rho_+}(0)+x_i-(1-2\rho_+)t<M t^{2/3} | x_{\rho_+}(t)=x_i).
\end{aligned}
\end{equation}
But $x_i-(1-2\rho_+)t=\gamma_i t^{\delta}+2\kappa t^{2/3}$, thus by choosing $M=\kappa$ and $t$ large enough, $x_i-(1-2\rho_+)< M t^{2/3}$. This gives
\begin{equation}
\Pb(G_+)=\Pb(x_{\rho_+}(0)<0 | x_{\rho_+}(t)=x_i)\geq 1-C e^{-c \kappa^2}.
\end{equation}
Similarly, we have
\begin{equation}
\Pb(G_-)=\Pb(x_{\rho_-}(0)>0 | x_{\rho_-}(t)=x_i)\geq 1-C e^{-c \kappa^2}.
\end{equation}
On the set $G=G_+\cap G_-$ we apply the comparison lemma, Lemma~\ref{lemComparison}, since we know that for step initial condition the backwards geodesics can be taken such that it reaches the origin, see Remark~\ref{remGeodesics}. This gives
\begin{equation}
h_{\rho_+}(x_2,t)-h_{\rho_+}(x_1,t)\geq h^{\rm step}(x_2,t)-h^{\rm step}(x_1,t) \geq h_{\rho_-}(x_2,t)-h_{\rho_-}(x_1,t),
\end{equation}
where $h_{\rho}$ denotes the height function for the stationary initial condition with density $\rho$, and $h^{\rm step}$ the height function with step initial condition.

In the stationary cases, the height difference is a sum of independent random variables:
\begin{equation}
h_\rho(x_2,t)-h_\rho(x_1,t)=\sum_{k=x_1}^{x_2} X_k,
\end{equation}
with $\Pb(X_k=1)=1-\rho$ and $\Pb(X_k=-1)=\rho$. Thus, by the central limit theorem, we have
\begin{equation}
\frac{h_{\rho_+}(x_2,t)-h_{\rho_+}(x_1,t)-(1-2\rho)(\gamma_2-\gamma_1)t^{\delta}+2\kappa(\gamma_2-\gamma_1)t^{\delta-1/3}}{\sqrt{4\rho(1-\rho)(\gamma_2-\gamma_1)}t^{\delta/2}} \stackrel{t\to\infty}{\Longrightarrow} G(0,1).
\end{equation}
Replace $\kappa$ by a sequence $\{\kappa_t\}_t$ such that $\kappa_t\to\infty$ but $\kappa t^{\delta/2-1/3}\to 0$ as $t\to\infty$. This is possible since $\delta<2/3$. Then the $\kappa$-dependent drift becomes irrelevant. Thus
\begin{equation}
\frac{h_{\rho_+}(x_2,t)-h_{\rho_+}(x_1,t)-(1-2\rho)(\gamma_2-\gamma_1)t^{\delta}}{\sqrt{4\rho(1-\rho)(\gamma_2-\gamma_1)}t^{\delta/2}} \stackrel{t\to\infty}{\Longrightarrow} G(0,1).
\end{equation}
In the same way we obtain the convergence of the increment of the middle distribution to the distribution. The choice of $\kappa_t\to\infty$ implies also that $\Pb(G)\to 1$. Therefore also the sandwiched height function have asymptotically the same distribution function (apply for example Lemma~\ref{LemCorwin}). Replacing the value of $\rho$ in terms of the (limits of) $\alpha,\beta$ the claimed result is proven.
\end{proof}

\begin{remark}
In our work on second class particle we are going to use only the convergence to a Gaussian law. However, the estimates also gives tightness of the process, which locally converges weakly to Brownian motion. This sandwiching procedure was first used in~\cite{CP15b} to prove tightness of the process, see~\cite{Pim17,FO17} for further applications.
\end{remark}

The same statement in terms of the random variable $\cal N$ is the following.
\begin{cor}\label{cor:stepDeltaSmall}
For any $-1< \alpha <1$, $\beta, \gamma_1,\gamma_2 \in \R$, with $\gamma_1< \gamma_2$, and $0< \delta< 2/3$, define
$x_i=\alpha t+\beta t^{1-\delta/2}+\gamma_i t^\delta$. Then we have
\begin{equation}
\lim_{t\to\infty}\frac{\mathcal{N} (x_2,t)- \mathcal{N}(x_1,t)-\frac12(\gamma_2-\gamma_1)((\alpha-1)t^\delta+\beta t^{\delta/2})}{\frac12\sqrt{(1-\alpha^2)(\gamma_2-\gamma_1)}t^{\delta/2}}\stackrel{d}{=}G(0,1),
\end{equation}
where $G(0,1)$ is a centered Gaussian random variable with variance $1$.
\end{cor}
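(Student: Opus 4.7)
The plan is to derive this corollary as an immediate algebraic consequence of Theorem~\ref{thm:LocalGaussianHeight}, using the identity $\mathcal{N}(x,t)=\tfrac12(h(x,t)-x)$ established for step initial condition in the beginning of Section~\ref{sec:Geodesics}. Since both statements concern the same two space-time points $x_1,x_2$ on the same step-initial TASEP, no new probabilistic input is needed — the work is purely a matching of the centerings and variances.

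First I would write
\begin{equation*}
\mathcal{N}(x_2,t)-\mathcal{N}(x_1,t)=\tfrac12\bigl(h^{\rm step}(x_2,t)-h^{\rm step}(x_1,t)\bigr)-\tfrac12(x_2-x_1),
\end{equation*}
and note that $x_2-x_1=(\gamma_2-\gamma_1)t^\delta$, so the deterministic correction contributes $-\tfrac12(\gamma_2-\gamma_1)t^\delta$. Subtracting the centering $\tfrac12(\gamma_2-\gamma_1)\bigl((\alpha-1)t^\delta+\beta t^{\delta/2}\bigr)$ from both sides, the constants reorganize to give exactly
\begin{equation*}
\mathcal{N}(x_2,t)-\mathcal{N}(x_1,t)-\tfrac12(\gamma_2-\gamma_1)\bigl((\alpha-1)t^\delta+\beta t^{\delta/2}\bigr)
=\tfrac12\Bigl[h^{\rm step}(x_2,t)-h^{\rm step}(x_1,t)-(\gamma_2-\gamma_1)(\alpha t^\delta+\beta t^{\delta/2})\Bigr].
\end{equation*}

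Next, dividing through by $\tfrac12\sqrt{(1-\alpha^2)(\gamma_2-\gamma_1)}\,t^{\delta/2}$ cancels the prefactor $\tfrac12$ in the numerator against the $\tfrac12$ in the denominator, producing exactly the rescaled quantity whose limit is governed by Theorem~\ref{thm:LocalGaussianHeight}. Applying that theorem then directly yields convergence in distribution to $G(0,1)$.

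There is no real obstacle here: the statement is a one-line translation between the ``height'' variable $h^{\rm step}$ and the ``counting'' variable $\mathcal{N}$, and the only thing to be careful about is that the factors of $\tfrac12$ on the numerator and the standard deviation match up consistently. The substantive content — namely the local Gaussian increments valid below the KPZ correlation scale — is entirely contained in Theorem~\ref{thm:LocalGaussianHeight}, whose proof in turn rests on the comparison lemma (Lemma~\ref{lemComparison}) together with the localization estimate Proposition~\ref{PropLocalStatInitialPoint} for the backwards geodesic endpoints under stationary measure.
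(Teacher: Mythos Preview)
Your proof is correct and matches the paper's own argument exactly: the paper also derives the corollary immediately from Theorem~\ref{thm:LocalGaussianHeight} via the identity $\mathcal{N}(x,t)=\tfrac12(h(x,t)-x)$. The algebraic bookkeeping you spell out is precisely the one-line translation the paper has in mind.
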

\begin{proof}
It follows from Theorem~\ref{thm:LocalGaussianHeight} and the relation ${\cal N}(x,t)=\tfrac12(h(x,t)-x)$.
\end{proof}

\appendix

\section{Some known results on the step initial condition}
For $\alpha\in(-1,1)$ fixed, define the rescaled height function for step initial condition by
\begin{equation}
u\mapsto h^{\rm resc}_{t}(u)=\frac{h(\alpha t+u t^{2/3},t)-(\tfrac12(1+\alpha^2)t+\alpha \kappa_h ut^{2/3}-\tfrac12 \kappa_h^2 u^2 t^{1/3})}{-\kappa_v t^{1/3}},
\end{equation}
with
\begin{equation}
\kappa_v=2^{-1/3}(1-\alpha^2)^{2/3},\quad \kappa_h= 2^{1/3}(1-\alpha^2)^{1/3}.
\end{equation}

\begin{thm}\label{Thm:CvgAiry}
Let $h(x,t)$ be TASEP height function starting with step initial condition. Then, for any given $M>0$, we have weak convergence on ${\cal C}([-M,M])$:
\begin{equation}
\lim_{t\to\infty}h^{\rm resc}_{t}(u) = {\cal A}_2(u).
\end{equation}
In particular,
\begin{equation}
\lim_{t\to\infty} \Pb(h^{\rm resc}_t(u)\leq s)=F_{\rm GUE}(s).
\end{equation}
\end{thm}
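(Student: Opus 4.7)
The plan is to appeal to the established results for TASEP with step initial condition, which are proved by passing through the mapping to last passage percolation (LPP) with i.i.d. Exp$(1)$ weights. The starting point is the well-known identity $\Pb(h(x,t)\le 2n-x) = \Pb(L(n_1,n_2)>t)$ (with $n_1,n_2$ determined by $x$ and $n$), which translates questions about the height function into questions about the LPP passage times. Under this correspondence, the KPZ scaling around $(\alpha t, t)$ with offset $u t^{2/3}$ maps to the standard LPP scaling of a point on the anti-diagonal of the rescaled LPP profile.

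For the one-point convergence (the ``in particular'' statement), I would invoke Johansson's theorem, which via the Fredholm-determinant representation of the LPP distribution gives the convergence of $(L(\lfloor\gamma^2 n\rfloor, n)-c_1 n)/(c_2 n^{1/3})$ to $F_{\rm GUE}$. Translating back to the height function via the bijection above yields the one-point convergence of $h^{\rm resc}_t(u)$, with the scaling constants $\kappa_v,\kappa_h$ matching through a direct Taylor expansion of the law of large numbers $\tfrac12(1+\alpha^2)t$ around $\alpha t + ut^{2/3}$ up to the relevant order $t^{1/3}$.

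For the convergence of finite-dimensional distributions to those of the Airy$_2$ process, I would cite the determinantal analysis of Johansson (and Borodin--Ferrari--Prähofer--Sasamoto for TASEP directly), which provides an explicit Fredholm-determinant formula for the joint distribution $\Pb(h^{\rm resc}_t(u_1)\le s_1,\ldots, h^{\rm resc}_t(u_n)\le s_n)$ that converges to the Airy$_2$ kernel Fredholm determinant. Again the matching of prefactors is a computation with the derivatives of $\tfrac12(1+\alpha^2)$.

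The remaining and historically main obstacle is to upgrade convergence of finite-dimensional distributions to weak convergence on $\mathcal{C}([-M,M])$; this requires tightness. The plan is to cite the established tightness arguments (e.g.\ Corwin--Ferrari--Péché), but one could alternatively reprove it self-consistently using the sandwiching-by-stationary argument already developed in this paper: Proposition~\ref{PropLocalStatInitialPoint} and Lemma~\ref{lemComparison} give, on an event of probability $1-Ce^{-c\kappa^2}$, two-sided bounds on the increment $h^{\rm resc}_t(u+\eta)-h^{\rm resc}_t(u)$ by sums of i.i.d.\ Bernoulli variables. These yield Gaussian-type tail bounds on the increments of order $|\eta|^{1/2}$, sufficient for the Kolmogorov--Chentsov tightness criterion on $\mathcal{C}([-M,M])$. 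Combined with the finite-dimensional convergence, this gives the claimed weak convergence to $\mathcal{A}_2$.
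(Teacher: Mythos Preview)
Your proposal is essentially correct and matches the paper's treatment: the paper does not prove this theorem either but simply collects citations --- Johansson for the one-point law, Borodin--P\'ech\'e, Imamura--Sasamoto, and Borodin--Ferrari for finite-dimensional convergence, and Ferrari--Occelli for tightness in the LPP framework.

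There is one nuance the paper makes explicit that you gloss over. The tightness results in the literature are proved in the LPP picture (point-to-point passage times) or for TASEP particle positions, not directly for the height function $h(\cdot,t)$ at a \emph{fixed} time $t$. The fixed-time height profile is a different space-like cut of the $(1+1)$-dimensional field, and transferring weak convergence from one cut to another requires an additional ingredient: a \emph{functional} slow-decorrelation statement (Theorem~4.1 of Ferrari--Occelli, together with the mechanism explained in Corwin--Ferrari--P\'ech\'e and Baik--Ferrari--P\'ech\'e). Your citation of ``Corwin--Ferrari--P\'ech\'e'' for tightness is slightly off --- that paper is the slow-decorrelation paper, which is precisely the transfer tool, not the tightness result itself.

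Your alternative route --- proving tightness directly for the height function via the sandwiching by stationary increments (Lemma~\ref{lemComparison} and Proposition~\ref{PropLocalStatInitialPoint}) --- is a legitimate and in some sense cleaner argument, because it works natively in the fixed-time height picture and avoids the functional slow-decorrelation transfer altogether. The paper's remark after Theorem~\ref{thm:LocalGaussianHeight} in fact acknowledges that the sandwiching estimates give tightness. So your alternative is not just viable but arguably more self-contained within this paper than the chain of citations the paper actually uses.
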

The one-point convergence is proven in Theorem~1.6 of~\cite{Jo00b}. The convergence to the Airy$_2$ process in terms of finite-dimensional distributions are special cases of LPP models~\cite{BP07,SI07} and in terms TASEP of particle positions in~\cite{BF07}. In the LPP framework tightness is proven in~\cite{FO17}, which implies weak convergence to the Airy$_2$ process (see Corollary~2.4 of~\cite{FO17}). Furthermore, a functional slow-decorrelation result is proven Theorem~4.1 of~\cite{FO17}, which implies the weak convergence also for the height function point of view (see~\cite{CFP10b,BFP09} to see how slow-decorrelation allows to deduce results on the height functions from results proven in the LPP picture or in the TASEP particle position point of view).

Theorem~\ref{Thm:CvgAiry} rewritten for the observable $\cal N$ is the following.
\begin{prop}\label{prop:TasepAiry}
For any given $M>0$, the rescaled $\cal N$ converges weakly to the Airy$_2$ process on the space of continuous functions ${\cal C}([-M,M])$:
\begin{equation}
\lim_{t \to \infty} \frac{ \mathcal{N} \left( 2 u (t/2)^{2/3}, t \right) - t/4 + u (t/2)^{2/3} - u^2 t^{1/3} 2^{-4/3} }{ - t^{1/3} 2^{-4/3}} = \mathcal{A}_2 (u).
\end{equation}
\end{prop}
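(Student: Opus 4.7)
The strategy is to deduce Proposition~\ref{prop:TasepAiry} directly from Theorem~\ref{Thm:CvgAiry} via the pointwise affine identity $\mathcal{N}(x,t) = \tfrac12(h(x,t) - x)$, which holds for step initial condition. No new probabilistic input is required; the content is just a change of observable combined with the appropriate choice of parameters in the Airy$_2$ convergence.

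First I would substitute $x = 2u(t/2)^{2/3} = 2^{1/3} u\, t^{2/3}$ into the counting function and use the identity above. The term $-\tfrac12 x = -u(t/2)^{2/3}$ produced in this way cancels exactly against the $+u(t/2)^{2/3}$ inside the centering in the statement, and after multiplying numerator and denominator by $2$ one reduces the quantity in Proposition~\ref{prop:TasepAiry} to
\[
\frac{h(2^{1/3} u\, t^{2/3},\, t) - t/2 - 2^{-1/3} u^2 t^{1/3}}{-2^{-1/3} t^{1/3}}.
\]

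Second, I would apply Theorem~\ref{Thm:CvgAiry} specialized to $\alpha = 0$, for which $\kappa_v = 2^{-1/3}$ and $\kappa_h = 2^{1/3}$. The spatial argument $2^{1/3} u\, t^{2/3}$ and the quadratic centering $2^{-1/3} u^2 t^{1/3}$ are precisely those appearing in $h^{\rm resc}_t$ when the spatial parameter is chosen so that the limit $\mathcal{A}_2$ is evaluated at $u$. Since the map $h \mapsto (h(\cdot,t)-\cdot)/2$ is deterministic, continuous, and affine on paths, the weak convergence on $\mathcal{C}([-M,M])$ passes immediately from the height function to the counting function by the continuous mapping theorem, giving the claimed functional convergence to $\mathcal{A}_2(u)$ on $[-M,M]$.

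The main (and only) obstacle is purely algebraic bookkeeping: one must verify that the three centering terms $t/4$, $u(t/2)^{2/3}$, $u^2 t^{1/3} 2^{-4/3}$ together with the normalization $-t^{1/3} 2^{-4/3}$ correctly combine, after multiplication by $2$, to reproduce the centering and scaling of the height function in Theorem~\ref{Thm:CvgAiry}, in particular tracking the various powers of $2$ introduced by the choice $x=2u(t/2)^{2/3}$. This requires care but no ideas beyond direct calculation.
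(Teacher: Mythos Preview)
Your proposal is correct and is exactly the approach the paper takes: the paper presents Proposition~\ref{prop:TasepAiry} as ``Theorem~\ref{Thm:CvgAiry} rewritten for the observable $\mathcal N$'', with no further proof, relying on the identity $\mathcal N(x,t)=\tfrac12(h(x,t)-x)$ and the specialization $\alpha=0$ that you describe. Your reduction to the expression $\bigl(h(2^{1/3}u\,t^{2/3},t)-t/2-2^{-1/3}u^2 t^{1/3}\bigr)/(-2^{-1/3}t^{1/3})$ is algebraically correct, and the continuous mapping argument cleanly transfers the weak convergence on $\mathcal C([-M,M])$.
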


Upper and lower bounds on the one-point distributions are also known.
\begin{prop}
Bound on upper tail: for given $s_0>0$ and $t_0>0$, there exists constants $C,c$ such that
\begin{equation}\label{eqBoundUpper}
\Pb(h^{\rm resc}_{t}(u)\geq s)\leq C e^{-c s}
\end{equation}
for all $t\geq t_0$ and $s\geq s_0$.\\
Bound on lower tail: for given $s_0>0$ and $t_0>0$, there exist constants $C,c$ such that
\begin{equation}\label{eqBoundLower}
\Pb(h^{\rm resc}_{t}(u)\leq s)\leq C e^{-c |s|^{3/2}}
\end{equation}
for all $t\geq t_0$ and $s\leq -s_0$.
\end{prop}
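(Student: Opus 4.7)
The plan is to reduce the bounds to well-known uniform tail estimates for last passage percolation (LPP) with i.i.d.\ exponential weights, via Johansson's bijection between TASEP with step initial condition and LPP. Writing $x = \alpha t + ut^{2/3}$ and using that $h(x,t) \ge 2m + x$ is equivalent to $L(m, m+x) \le t$ for the exponential LPP time $L(\cdot,\cdot)$, the events $\{h^{\rm resc}_t(u)\ge s\}$ and $\{h^{\rm resc}_t(u)\le s\}$ become, respectively, upper-tail and lower-tail events for $L(M,N)$ at parameters $M,N$ both of order $t$. The precise $t^{1/3}$ scaling lines up because $(\sqrt{M}+\sqrt{N})^2$ and the law-of-large-number drift $\tfrac12(1+\alpha^2)t+\alpha\kappa_h ut^{2/3}-\tfrac12\kappa_h^2u^2t^{1/3}$ differ exactly by $\kappa_v t^{1/3}$ times the rescaled coordinate, with the fluctuation amplitude $\kappa_v t^{1/3}$ corresponding to the usual LPP scale $(MN)^{1/6}/(\sqrt{M}+\sqrt{N})^{1/3}$.

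Step one is to carry out this translation with explicit $M=M(t,u,\alpha)$, $N=N(t,u,\alpha)$, both linear in $t$, and to check that for $t\ge t_0$ both $M$ and $N$ are bounded below by $c_\alpha t$. Step two is to invoke the uniform exponential-LPP tail bounds: for any $\eta>0$ there exist $C,c>0$ such that, for all $N\ge N_0$ and $s_0\le s\le \eta N^{2/3}$,
\begin{equation}
\Pb\bigl(L(M,N) \ge (\sqrt{M}+\sqrt{N})^2 + s\,\sigma_{M,N}\bigr)\le C e^{-cs^{3/2}},
\end{equation}
\begin{equation}
\Pb\bigl(L(M,N) \le (\sqrt{M}+\sqrt{N})^2 - s\,\sigma_{M,N}\bigr)\le C e^{-cs^{3}},
\end{equation}
with $\sigma_{M,N}$ the natural KPZ scale. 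These are proven by contour deformation of Johansson's Fredholm determinant $\Pb(L(M,N)\le T) = \det(I-K_{M,N,T})$ to steepest-descent paths, yielding a uniform bound of the form $|K^{\rm resc}(\xi,\eta)| \le C e^{-c(\xi+\eta)}$ on the rescaled kernel; the upper tail follows from the Hadamard/trace bound $1-\det(I-K) \le \exp(\|K\|_1)-1$, while the lower tail comes from an eigenvalue lower bound on $\det(I-K)$. References such as Baik--Deift--Johansson, Baik--Ben Arous--P\'ech\'e and Ledoux--Rider provide these in the form stated; Ferrari--Occelli (already cited as \cite{FO17}) gives the bounds in the form directly needed here.

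Step three is to plug the translation from Step one into the LPP tails of Step two and observe that $s^{3/2}\ge s$ for $s\ge s_0$ and $s^{3}\ge s^{3/2}$ for $s\ge 1$, so the LPP bounds imply the weaker bounds stated in the proposition with suitable constants. The upper-tail event for $h^{\rm resc}_t$ is an upper-tail event for $L$, giving $e^{-cs^{3/2}}\le e^{-cs}$; the lower-tail event for $h^{\rm resc}_t$ is a lower-tail event for $L$, giving $e^{-c|s|^3}\le e^{-c|s|^{3/2}}$.

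The main obstacle is not the LPP tail estimates themselves, which are standard, but ensuring that everything is uniform in $t\ge t_0$ and (most importantly) uniform up to values of $s$ large enough for the claimed bound to be nontrivial. This requires that the steepest-descent contours in Johansson's kernel be chosen in a single family valid for all $t\ge t_0$, which is routine once $(M/t,N/t)$ is confined to a compact subset of $(0,\infty)^2$ bounded away from $0$; the condition $|\alpha|<1$ plus $u$ fixed guarantees this. A secondary technical point is handling the truncation at $s\sim \eta N^{2/3}\sim \eta' t^{2/3}$: for $s$ beyond this moderate-deviation range, the much stronger large-deviation bounds (of order $e^{-c t}$, hence dominating any $e^{-cs}$ or $e^{-c|s|^{3/2}}$ as long as $s=O(t)$) take over, and for $s$ larger than this the events become empty (e.g.\ $h\ge 0$ is automatic and the bound is trivial).
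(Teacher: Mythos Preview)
Your proposal is correct and matches the paper's treatment: the paper does not give an independent proof but simply cites the Fredholm determinant/Laguerre ensemble representation (Proposition~6.1 of \cite{BBP06}) and the explicit tail bounds in \cite{BFP12}, which is exactly the LPP route you outline via Johansson's bijection. Your identification of which tail of $h^{\rm resc}_t$ corresponds to which tail of $L$ (via the minus sign in the denominator) is correct, and the weakening $e^{-cs^{3/2}}\le Ce^{-cs}$ and $e^{-c|s|^3}\le Ce^{-c|s|^{3/2}}$ is exactly the slack in the stated proposition.
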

The constants $C,c$ can be chosen uniformly for $\alpha$ in a bounded set of $(-1,1)$.  Using the relation with the Laguerre ensemble of random matrices (Proposition~6.1 of~\cite{BBP06}), or to TASEP, one sets the distribution is given by a Fredholm determinant. An exponential decay of its kernel leads directly to the upper tail. See e.g.\ Lemma~1 of~\cite{BFP12} for an explicit statement. The lower tail was proven in~\cite{BFP12} (Proposition~3 together with (56)).

Here is a probabilistic lemma used in the slow-decorrelation type theorems.
\begin{lem}[Lemma~4.1 of~\cite{BC09}]\label{LemCorwin}
Consider two sequences of random variables $\{X_n\}$ and $\{\tilde X_n\}$ such that for each $n$, $X_n$ and $\tilde X_n$ are defined on the same probability space. If $X_n\geq \tilde X_n$ and $X_n\Rightarrow D$ as well as $\tilde X_n\Rightarrow D$, then $X_n-\tilde X_n$ converges to zero in probability. Conversely, if $\tilde X_n\Rightarrow D$ and $X_n-\tilde X_n$ converges to $0$ in probability, then $X_n\Rightarrow D$ as well.
\end{lem}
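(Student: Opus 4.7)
The lemma has two independent directions which I would prove separately.

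For the forward direction, assume $X_n \ge \tilde X_n$ almost surely and $X_n, \tilde X_n \Rightarrow D$, where $F$ denotes the CDF of $D$. Since $X_n - \tilde X_n \ge 0$, the goal reduces to showing $\Pb(X_n - \tilde X_n > \eps) \to 0$ for every $\eps > 0$. My plan is a partition argument at continuity points of $F$. Fix $M$ large enough that $F(-M) + 1 - F(M) < \eps$, and choose continuity points $-M = a_0 < a_1 < \cdots < a_k = M$ of $F$ with $a_{i+1} - a_i \le \eps/2$, so that $a_{i+2} - a_i \le \eps$. On the event $\{\tilde X_n \in [a_i, a_{i+1}),\, X_n - \tilde X_n > \eps\}$ one has $X_n > a_i + \eps \ge a_{i+2}$. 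The pointwise ordering $X_n \ge \tilde X_n$ yields $\{\tilde X_n \ge a_{i+2}\} \subseteq \{X_n \ge a_{i+2}\}$, and hence the telescoping bound
\begin{equation}
\Pb\bigl(\tilde X_n \in [a_i, a_{i+1}),\, X_n \ge a_{i+2}\bigr) \le \Pb(X_n \ge a_{i+2}) - \Pb(\tilde X_n \ge a_{i+2}).
\end{equation}
Since $a_{i+2}$ is a continuity point of $F$, both probabilities on the right tend to $1 - F(a_{i+2})$, so each difference vanishes as $n \to \infty$. Summing over the finitely many indices and adding the tail contribution from $\{\tilde X_n \notin [-M, M]\}$ (controlled by tightness), one obtains $\limsup_n \Pb(X_n - \tilde X_n > \eps) \le \eps$, and since $\eps$ is arbitrary the conclusion follows.

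For the converse, assume $\tilde X_n \Rightarrow D$ and $X_n - \tilde X_n \to 0$ in probability. This is Slutsky's theorem applied to the decomposition $X_n = \tilde X_n + (X_n - \tilde X_n)$. Concretely, for any continuity point $s$ of $F$ and any $\delta > 0$, the inclusions $\{\tilde X_n \le s - \delta\} \cap \{|X_n - \tilde X_n| \le \delta\} \subseteq \{X_n \le s\}$ and $\{X_n \le s\} \cap \{|X_n - \tilde X_n| \le \delta\} \subseteq \{\tilde X_n \le s + \delta\}$ give
\begin{equation}
\Pb(\tilde X_n \le s - \delta) - \Pb(|X_n - \tilde X_n| > \delta) \le \Pb(X_n \le s) \le \Pb(\tilde X_n \le s + \delta) + \Pb(|X_n - \tilde X_n| > \delta).
\end{equation}
Taking $n \to \infty$ and then $\delta \to 0$ along continuity points of $F$ sandwiches $\Pb(X_n \le s)$ between $F(s-)$ and $F(s+) = F(s)$, proving $X_n \Rightarrow D$.

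The substantive step is the forward direction: weak convergence of the marginals carries no information about the joint law of $(X_n, \tilde X_n)$, and it is the almost sure ordering $X_n \ge \tilde X_n$ that makes the identity $\Pb(X_n \ge a) - \Pb(\tilde X_n \ge a) = \Pb(X_n \ge a,\, \tilde X_n < a)$ available and thereby converts marginal convergence into a bound on the difference. The converse is a routine Slutsky argument once the forward direction is set aside.
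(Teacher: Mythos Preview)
The paper does not give its own proof of this lemma; it is quoted verbatim from~\cite{BC09} and used as a black box. So there is nothing in the paper to compare your argument against.

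Your proof is correct. The converse direction is exactly Slutsky's theorem, as you say. For the forward direction, the partition argument works; the key observation you isolate---that the ordering $X_n\ge\tilde X_n$ turns the marginal difference $\Pb(X_n\ge a)-\Pb(\tilde X_n\ge a)$ into the joint probability $\Pb(X_n\ge a,\,\tilde X_n<a)$---is precisely what makes the argument go. One small remark: you use the same $\eps$ for the deviation threshold, the mesh size, and the tail bound, which leads to the slightly awkward conclusion $\limsup_n\Pb(X_n-\tilde X_n>\eps)\le\eps$; this does imply convergence in probability (bound $\Pb(X_n-\tilde X_n>\eps_0)$ by $\Pb(X_n-\tilde X_n>\eps)$ for any $\eps<\eps_0$), but decoupling the three parameters would make the logic cleaner. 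Also, the index $i=k-1$ requires $a_{k+1}$, which you have not defined; the corresponding event is contained in $\{X_n>M\}$ and can be absorbed into the tail term, so this is a cosmetic gap only.
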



\end{document}